\documentclass[12pt]{amsart}
\usepackage{color}
\usepackage{epsfig}
\usepackage{graphicx}
\usepackage{hyperref}
\usepackage{amssymb}
\usepackage{amsfonts}
\usepackage{euscript}
\usepackage[all]{xy}
\usepackage{setspace}
\usepackage{amsbsy}

\parindent0em
\parskip1em
\hoffset0em \oddsidemargin10pt \evensidemargin10pt \textwidth40em
\numberwithin{equation}{section}
\linespread{1.1}
\renewcommand{\subsection}[1]{\hspace{-\parindent}\refstepcounter{subsection}{\bf
(\arabic{section}\alph{subsection}) #1.}\addcontentsline{toc}{subsection}
{\bf #1.}}



\newtheorem{thm}{Theorem}[section]
\newtheorem{theorem}[thm]{Theorem}

\newtheorem{corollary}[thm]{Corollary}

\newtheorem{definition}[thm]{Definition}

\newtheorem{remark}[thm]{Remark}

\newtheorem{proposition}[thm]{Proposition}
\newtheorem{example}[thm]{Example}

\newtheorem{lemma}[thm]{Lemma}
\newtheorem{setup}[thm]{Setup}

\newtheorem{conjecture}[thm]{Conjecture}
\newtheorem*{claim*}{Claim}
\newtheorem*{lemma*}{Lemma}


\newcommand{\bC}{{\mathbb C}}

\newcommand{\bF}{{\mathbb F}}

\newcommand{\bK}{{\mathbb K}}

\newcommand{\bQ}{{\mathbb Q}}
\newcommand{\bR}{{\mathbb R}}

\newcommand{\bZ}{{\mathbb Z}}

\newcommand{\scrC}{\EuScript C}

\newcommand{\scrE}{\EuScript E}

\newcommand{\scrH}{\EuScript H}

\newcommand{\scrJ}{\EuScript J}

\newcommand{\scrL}{\EuScript L}

\newcommand{\scrO}{\EuScript O}
\newcommand{\scrP}{\EuScript P}

\newcommand{\scrR}{\EuScript R}


\newcommand{\iso}{\cong}
\newcommand{\htp}{\simeq}
\newcommand{\smooth}{C^\infty}


\newcommand{\lbr}{[\![}
\newcommand{\rbr}{]\!]}

\title{Disjoinable Lagrangian spheres and dilations}
\author{Paul Seidel}
\date{September 1, 2013}

\begin{document}
\maketitle

\begin{abstract}
We consider open symplectic manifolds which admit dilations (in the sense previously introduced by Solomon and the author). We obtain restrictions on collections of Lagrangian submanifolds which are pairwise disjoint (or pairwise disjoinable by Hamiltonian isotopies) inside such manifolds. This includes the Milnor fibres of isolated hypersurface singularities which have been stabilized (by adding quadratic terms) sufficiently often.
\end{abstract}

\section{Introduction}

\subsection{Motivation}
What restrictions are there on collections of pairwise disjoint Lagrangian submanifolds inside a given symplectic manifold? This is a reasonably natural question, or rather class of questions, in symplectic topology. One can view it as an attempt to bound the ``size'' or ``complexity'' of the symplectic manifold under consideration, in analogy with the fact that a closed oriented surface of genus $g>1$ can contain at most $3g-3$ pairwise disjoint simple closed curves in a nontrivial sense (nontrivial means that no curve is contractible, and no two are isotopic). There is additional motivation for specializing to Lagrangian spheres, because of their relation with nodal degenerations in algebraic geometry; and from there possibly further to the three-dimensional case, where collections of disjoint Lagrangian three-spheres provide the starting point for the surgery construction from \cite{smith-thomas-yau02}, itself modelled on small resolutions of nodal singularities. 

By way of example, let's consider hypersurfaces in projective space, of a fixed degree and dimension. Starting from a hypersurface with nodal singularities, one obtains a collection of pairwise disjoint Lagrangian spheres in the corresponding smooth hypersurface, as vanishing cycles (as an aside, note that since one can choose a hyperplane which misses all the original nodes, the Lagrangian spheres obtained in this way actually all lie inside an affine smooth hypersurface). There are numerous constructions of hypersurfaces with many nodes in the literature, see for instance \cite[vol.\ 2, p.\ 419]{arnold-gusein-zade-varchenko} or \cite[Section 8.1]{labs06}. On the other hand, the size of collections obtained in this way is limited by known upper bounds for the possible number of nodes, such as those derived in \cite{varchenko83} using Hodge-theoretic methods. It is an interesting open question how these bounds compare to the situation for general collections of Lagrangian spheres. There are reasons to be cautious about proposing a direct relationship. One is the possible existence of nodal degenerations where the special fibre is not itself a hypersurface, even though a smooth fibre is (this is certainly an issue in the toy case of algebraic curves: the maximal number of nodes on a plane curve of degree $d$, including reducible curves, is $d^2/2 - d/2$ by the Pl{\"u}cker formula, and that is less than $3g-3 = 3d^2/2 - 9d/2$ for $d \geq 5$). From a wider perspective, there are other constructions of Lagrangian spheres (as components of real loci, or more speculatively in terms of SYZ fibrations) which are not a priori related to vanishing cycles.


\subsection{First observations}
Let's return to the general question as formulated at the outset. For any oriented closed totally real submanifold $L^n$ in an almost complex manifold $M^{2n}$, the selfintersection equals the Euler characteristic up to a dimension-dependent sign:
\begin{equation} \label{eq:euler}
[L] \cdot [L] = (-1)^{n(n+1)/2} \chi(L).
\end{equation}
This holds mod $2$ without the orientation assumption (there is also a mod $4$ version in the unoriented case, involving the Pontryagin square \cite{massey69b}; but we won't use it).

\begin{theorem}[folklore] \label{th:folk}
Fix some field $\bK$. Consider closed totally real submanifolds $L \subset M$ such that $\chi(L)$ is not a multiple of the characteristic $\mathrm{char}(\bK)$. If $\mathrm{char}(\bK) \neq 2$, we also assume that our $L$ are oriented. Let $(L_1,\dots,L_r)$ be a collection of such submanifolds, which are pairwise disjoint (or disjoinable by homotopies). Then the classes $[L_1],\dots,[L_r] \in H_n(M;\bK)$ are linearly independent. 
\end{theorem}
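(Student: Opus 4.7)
The plan is to leverage the intersection pairing on $H_n(M;\bK)$ together with \eqref{eq:euler}. The idea is extremely short: the Gram matrix of this pairing restricted to the span of $[L_1],\dots,[L_r]$ comes out diagonal with nonzero diagonal entries, and this forces linear independence.

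To set things up, I take each $L_i$ (compact, and oriented if $\mathrm{char}(\bK)\neq 2$, otherwise interpreted over $\bZ/2$) to represent a fundamental class $[L_i]\in H_n(M;\bK)$. Since the $L_i$ are compact while $M$ itself need not be, the pairing I want to use is the one that sends two compact cycles of complementary dimension in an oriented $2n$-manifold to their signed count of transverse intersection points; this is well-defined on homology classes of compactly-supported cycles (equivalently, dualise the Poincar\'e duals, which live in compactly-supported cohomology, and cup). In particular, this pairing is a homotopy invariant of the representing maps $L_i\hookrightarrow M$.

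Next I compute the pairing on our classes. For the diagonal entries, \eqref{eq:euler} gives $[L_i]\cdot[L_i]=(-1)^{n(n+1)/2}\chi(L_i)$, which is nonzero in $\bK$ by the hypothesis on $\chi(L_i)$. For the off-diagonal entries, the submanifolds $L_i$ and $L_j$ are either pairwise disjoint to begin with, or can be made so after homotopies; since homotopies preserve homology classes and hence the intersection number, one concludes $[L_i]\cdot[L_j]=0$ whenever $i\neq j$.

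To finish, suppose $\sum_{i}a_i[L_i]=0$ in $H_n(M;\bK)$. Pairing with $[L_j]$ gives $a_j(-1)^{n(n+1)/2}\chi(L_j)=0$, and since $\chi(L_j)$ is invertible in $\bK$ this forces $a_j=0$ for every $j$. There is no real obstacle; the only point that requires a moment's thought is the well-definedness of the intersection pairing in the non-compact setting, which causes no trouble because the cycles involved are all compactly supported, and the fact that, in the ``disjoinable'' case, it is enough to exploit pairwise (rather than simultaneous) disjointness after homotopy---which is exactly what is needed to evaluate each entry of the Gram matrix separately.
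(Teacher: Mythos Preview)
Your proof is correct and follows essentially the same approach as the paper: pair a hypothetical linear relation $\sum a_i[L_i]=0$ with each $[L_j]$, use $[L_i]\cdot[L_j]=0$ for $i\neq j$ (from disjointness up to homotopy) and $[L_j]\cdot[L_j]=(-1)^{n(n+1)/2}\chi(L_j)\neq 0$ in $\bK$, and conclude $a_j=0$. Your additional remarks about compactly supported cycles in a possibly non-compact $M$ are a reasonable elaboration of a point the paper leaves implicit.
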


That follows directly from \eqref{eq:euler}, since any relation $a_1[L_1] + \cdots + a_r[L_r] = 0$ implies that
\begin{equation}
0 = [L_i] \cdot (a_1[L_1] + \cdots + a_r[L_r]) = (-1)^{n(n+1)/2} a_i \,\chi(L_i) \in \bK,
\end{equation}
and $\chi(L_i)$ is nonzero as an element of $\bK$ by assumption. The limitations of such topological methods are clear: $\bC^3$ contains an embedded totally real three-sphere, hence also (by translating it) an infinite number of pairwise disjoint such spheres.
%
%
%

Within symplectic topology properly speaking, one can group the existing literature into two approaches. The first one is quite general in principle, but hard to carry out in practice: it considers Lagrangian submanifolds as objects of the Fukaya category with suitable properties, and proceeds via a classification of such objects up to isomorphism. The second approach is a priori limited to manifolds with semisimple quantum cohomology rings. One could view the two as related, since in many examples the Fukaya categories of manifolds with semisimple quantum cohomology are themselves semisimple, hence lend themselves easily to a classification of objects. However, the point of the second approach is precisely that one can bypass an explicit study of the Fukaya category, and instead just use general properties of open-closed string maps. We will now survey both approaches briefly.

\subsection{Classification results for the Fukaya category} Possibly the earliest example of this strategy is \cite[Theorem 1(4)]{seidel04c} about cotangent bundles of odd-dimensional spheres (the resulting statement is in fact a special case of an earlier theorem \cite[Th\'eor\`eme 2]{lalonde-sikorav91}; but the earlier proof lies outside the scope of our discussion, because it uses a geometric trick specific to homogeneous spaces). Since then, the state of knowledge about cotangent bundles has developed considerably \cite{nadler06, fukaya-seidel-smith07, abouzaid11c}. Maybe more importantly for our purpose, there is at least
%
%
one case other than cotangent bundles which has been successfully analyzed in this way, namely the Milnor fibres of type $(A_m)$ singularities \cite{abouzaid-smith11, seidel12}: 

\begin{theorem}[\protect{\cite[Theorems 1.1 and 1.2]{seidel12}}] \label{th:equivariant}
Let $M^{2n}$ be the Milnor fibre of the $(A_m)$ singularity, with $n \geq 3$ odd and arbitrary $m$. Consider Lagrangian submanifolds $L \subset M$ which are rational homology spheres and {\em Spin}. Then:

(i) $[L] \in H_n(M;\bZ)$ is nonzero and primitive. 

(ii) Two such submanifolds which are disjoint must have different mod $2$ homology classes.
\end{theorem}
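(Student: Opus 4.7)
My approach is to analyze $L$ as an object in the compact Fukaya category $\mathcal{F}(M)$, which is generated by the chain of vanishing cycles $V_1,\ldots,V_m$ in the $A_m$ configuration and is quasi-equivalent (for suitable choice of grading datum) to the derived category of the $A_m$ quiver. Since $M$ is exact (an affine variety) and $2c_1(M)=0$, the Spin and orientation assumptions upgrade $L$ to a $\bZ$-graded exact object of $\mathcal{F}(M)$. The rational homology sphere hypothesis then gives $HF^*(L,L;\bK)\cong H^*(L;\bK)\cong H^*(S^n;\bK)$, so $L$ is a spherical object in the Seidel--Thomas sense.

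For (i), I would invoke a classification: every spherical object in this type of $A_m$-graded derived category is equivalent, up to shift, to a twist of a simple object under the braid group generated by the spherical twists $T_{V_i}$. On the Grothendieck group $K_0(\mathcal{F}(M))\cong H_n(M;\bZ)\cong\bZ^m$, this braid group acts through the Weyl group of $A_m$, so the orbit of a simple class is the set of roots of the $A_m$ root system. Every such root is primitive and nonzero, giving (i). For (ii), disjointness of $L$ and $L'$ forces $HF^*(L,L')=0$, hence $[L]\cdot[L']=0$ in the intersection pairing, which is alternating because $n$ is odd; this pairing constraint alone is too weak to produce the mod $2$ conclusion. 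I would strengthen it using the $(m+1)$-fold cyclic cover $\tilde M\to M$ associated to the $\bZ/(m+1)$-symmetry of the singularity: lifting $L$ and $L'$ equivariantly and imposing disjointness of all pairs of translates yields a richer system of vanishing Floer groups, sharpening the constraint to the mod $2$ statement.

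The main obstacle is the spherical object classification in the second step. This is algebraic but highly nontrivial; its surface analogue is the Ishii--Uehara theorem, while the higher-dimensional graded version requires a careful combinatorial analysis of the braid group action on the category of modules, and does not follow from formal properties. The assumption $n\geq 3$ odd is essential in two ways: oddness of $n$ makes the intersection pairing alternating (used in (ii)), and $n\geq 3$ ensures that the grading conventions produce the algebraic category for which the classification is available, avoiding the curve case $n=1$ which has a very different character.
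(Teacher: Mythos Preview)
This theorem is not proved in the present paper; it is quoted from \cite{seidel12}, and the paper only indicates the method in one sentence: the argument rests on a classification of objects in the Fukaya category, carried out by hand in \cite{abouzaid-smith11} for $m=2$ and via the algebro-geometric results of \cite{ishii-uehara05,ishii-ueda-uehara10} in general. Your outline for part (i) is in the right spirit and matches this: one shows $L$ is a spherical object in the subcategory generated by the vanishing cycles, invokes the classification of spherical objects up to the braid group action, and reads off the homology class as a root. You do gloss over the generation step (why $L$ lies in that subcategory at all), which is itself a nontrivial input.

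Your proposal for part (ii), however, has a genuine gap. The cyclic cover argument is not worked out and, as stated, does not go through: there is no reason a given $L$ lifts equivariantly to an $(m+1)$-fold cover, and even if $L$ and $L'$ are disjoint in $M$, nothing forces all translates of their lifts to be pairwise disjoint. The actual mechanism in \cite{seidel12} is quite different and is signalled by that paper's title: one uses $\bC^*$-equivariant $A_\infty$-module structures (an algebraic analogue of the dilations in the present paper, as noted after Example~1.7). This equivariant enrichment refines the Floer-theoretic information well beyond the bare vanishing of $\mathit{HF}^*(L,L')$ and the resulting intersection-number constraint, and it is this refinement that yields the mod $2$ separation. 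Your instinct that one needs something stronger than $[L]\cdot[L']=0$ is correct, but the strengthening comes from equivariant categorical structure, not from a covering-space trick.
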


The necessary algebraic classification of objects in the Fukaya category of $M$ was carried out by hand in \cite{abouzaid-smith11} for $m = 2$, whereas \cite{seidel12} relied on algebro-geometric results from \cite{ishii-uehara05,ishii-ueda-uehara10}. As an illustration of the difficulty of obtaining such a classification, note that so far we only have partial analogues of the results from \cite{ishii-uehara05} for the remaining simple singularities, of types $(D_m)$ and $(E_m)$ \cite{brav-thomas11}.

\subsection{Manifolds with semisimple quantum cohomology} 
Results that fall into this class can be found in \cite{entov-polterovich09, biran-cornea09c} (a significant precursor is \cite{albers05,albers10}). Here is a sample:

\begin{theorem}[\protect{A version of \cite[Theorem 1.25]{entov-polterovich09}}] \label{th:semisimple}
Let $M$ be a closed monotone symplectic manifold, whose ($\bZ/2$-graded) quantum cohomology $\mathit{QH}^*(M) = H^*(M;\bK)$, defined over some algebraically closed field $\bK$, is semisimple. Consider monotone Lagrangian submanifolds $L \subset M$ which are oriented and {\em Spin} (if $\mathrm{char}(\bK) = 2$, one can drop the {\em Spin} assumption). Let $(L_1,\dots,L_r)$ be a collection of such submanifolds, which are pairwise disjoint or disjoinable by Hamiltonian isotopies. 

(i) Suppose that $\mathit{HF}^*(L_i,L_i) \neq 0$ for all $i$. Then $r \leq \mathrm{dim}\, \mathit{QH}^0(M)$, where the right hand side is the sum of the even Betti numbers (with $\bK$-coefficients).

(ii) In the same situation, suppose that all the $[L_i] \in H_n(M;\bK)$ are nonzero. Then they must be linearly independent.
\end{theorem}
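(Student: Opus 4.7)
The plan, following Entov-Polterovich, is to use semisimplicity of $\mathit{QH}^0(M)$ to attach to each $L_i$ a nonempty ``support set'' of idempotents, and then to argue that disjoinable Lagrangians have disjoint supports. Since $\bK$ is algebraically closed and $\mathit{QH}^0(M)$ is semisimple commutative,
\begin{equation*}
\mathit{QH}^0(M) \;\cong\; \bK^N, \qquad N = \dim_\bK \mathit{QH}^0(M),
\end{equation*}
with orthogonal primitive idempotents $e_1,\ldots,e_N$ summing to $1$. For each $i$, the hypothesis $\mathit{HF}^*(L_i,L_i) \neq 0$ makes the closed-open map $\mathit{CO}_{L_i}\colon \mathit{QH}^*(M) \to \mathit{HF}^*(L_i,L_i)$ a unital ring homomorphism, so the support
\begin{equation*}
S(L_i) \;:=\; \{j : \mathit{CO}_{L_i}(e_j) \neq 0\} \;\subseteq\; \{1,\ldots,N\}
\end{equation*}
is nonempty. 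The geometric core is the non-displacement claim: \emph{if $L$ and $L'$ are disjoinable by Hamiltonian isotopy, then $S(L)\cap S(L') = \emptyset$}. This is the essential content of the Entov-Polterovich theorem; the idea is to build from each idempotent $e_j$ a partial symplectic quasi-state via spectral invariants, show that $\mathit{CO}_L(e_j) \neq 0$ implies $L$ is $e_j$-heavy, and then deduce that two $e_j$-heavy Lagrangians (for the same $j$) must intersect under every Hamiltonian isotopy. Granting this, part (i) is immediate: the sets $S(L_1),\ldots,S(L_r)$ are pairwise disjoint nonempty subsets of $\{1,\ldots,N\}$, giving $r \leq N$.

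Part (ii) uses the further input that the open-closed map $\mathit{OC}_L\colon \mathit{HF}^*(L,L)\to \mathit{QH}^{*+n}(M)$ is a map of $\mathit{QH}^*(M)$-modules, with $\mathit{HF}^*(L,L)$ carrying the module structure coming from $\mathit{CO}_L$. The resulting identity $\mathit{OC}_L(\mathit{CO}_L(\alpha)\cdot x) = \alpha \cdot \mathit{OC}_L(x)$, applied with $x = e_L$ and $\alpha = e_j$ for $j \notin S(L)$, gives $e_j \cdot \mathit{OC}_L(e_L) = 0$. In the monotone setting the class $\mathit{OC}_L(e_L) \in \mathit{QH}^n(M)$ has classical (constant-disk) part $\mathit{PD}[L]$, and a degree/energy-filtration argument upgrades the previous identity to $e_j \cdot \mathit{PD}[L] = 0$ for $j \notin S(L)$. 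So each $\mathit{PD}[L_i]$ lies in $V_i := \bigoplus_{j \in S(L_i)} e_j\, \mathit{QH}^n(M)$; disjointness of the supports $S(L_i)$ puts these subspaces in internal direct sum inside $\mathit{QH}^n(M)$; and the hypothesis $[L_i]\neq 0$ makes $\mathit{PD}[L_i]$ a nonzero vector in its own summand $V_i$, giving linear independence of the $[L_i]$.

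The principal obstacle is the non-displacement claim in part (i): this is where the genuine symplectic input sits, and it requires setting up the spectral-invariant machinery on a monotone symplectic manifold together with the implication ``$\mathit{CO}_L(e_j) \neq 0 \Rightarrow L$ is $e_j$-heavy'', which in turn rests on computing spectral invariants of Hamiltonians supported near $L$. A secondary technical point is the filtration argument passing from $e_j \cdot \mathit{OC}_L(e_L) = 0$ to $e_j \cdot \mathit{PD}[L] = 0$; in the monotone case the disk-energy stratification of $\mathit{OC}_L$ makes this routine, but it is the place where monotonicity is essential.
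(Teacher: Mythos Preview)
Your plan is correct and is essentially Entov--Polterovich's original argument, but the paper takes a genuinely different route, so a comparison is in order.

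For the key disjointness step in part~(i), you invoke the spectral-invariant machinery: $\mathit{CO}_L(e_j)\neq 0$ implies $L$ is $e_j$-heavy, and two $e_j$-heavy sets cannot be Hamiltonian-displaced. The paper instead bypasses spectral invariants entirely. It works with the \emph{open-closed} map $\mathit{OC}_L\colon \mathit{HF}^*(L,L)\to \mathit{QH}^{*+n}(M)$ and observes that (a)~its image in even degree is a direct sum of some of the idempotent summands $\bK u_i$; (b)~if $\mathit{HF}^*(L,L)\neq 0$, composing $\mathit{OC}_L$ with $\int_M$ is nontrivial, so the image is nonzero; and (c)~if $\mathit{HF}^*(L_0,L_1)=0$, then the images of $\mathit{OC}_{L_0}$ and $\mathit{OC}_{L_1}$ are orthogonal for the intersection pairing, by a Cardy-type relation. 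Since the pairing is nondegenerate on each summand, orthogonality forces the images to occupy disjoint sets of summands, giving $r\le N$. This is closer to the TQFT/Fukaya-category viewpoint and avoids the heaviness machinery, at the cost of needing $\mathit{HF}^*(L_i,L_j)$ to be defined for $i\neq j$ (the paper handles the minimal-Maslov-number~$2$ case separately via the $c_1$-eigenvalue decomposition and Auroux's lemma). Your spectral-invariant approach sidesteps that technicality, since heaviness speaks directly about non-displaceability.

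For part~(ii), both arguments coincide in spirit: $[L]=\mathit{OC}_L(1_L)$ lies in the submodule of $\mathit{QH}^*(M)$ supported on $S(L)$, and disjoint supports give linear independence. Your filtration remark is over-cautious here: in the paper's monotone setup over $\bK$, $\mathit{OC}_L(1_L)$ \emph{is} the Poincar\'e dual of $[L]$ in $\mathit{QH}^n(M)=H^n(M;\bK)$, so no separate energy argument is needed.
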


A short outline of the argument may be appropriate. First, semisimplicity of $\mathit{QH}^*(M)$ means that its even part splits as
\begin{equation} \label{eq:semisimplicity}
\mathit{QH}^0(M) = \bigoplus_{i \in I} \bK u_i,
\end{equation}
where $(u_i)_{i \in I}$ is a collection of pairwise orthogonal idempotents. Because the intersection pairing is nondegenerate, it must be nontrivial on each summand \eqref{eq:semisimplicity}. Fix some $L$, and consider the open-closed string map
\begin{equation} \label{eq:oc-map}
\mathit{HF}^*(L,L) \longrightarrow \mathit{QH}^{*+n}(M).
\end{equation}
Because of its compatibility with the structure of $\mathit{HF}^*(L,L)$ as a module over $\mathit{QH}^*(M)$, the image of \eqref{eq:oc-map} in even degrees consists of a subset of summands in \eqref{eq:semisimplicity}. Moreover, if $\mathit{HF}^*(L,L)$ is nonzero, the composition of \eqref{eq:oc-map} with $\int_M: \mathit{QH}^0(M) \rightarrow \bK$ is nontrivial, hence the previously mentioned subset is nonempty. Finally, if $\mathit{HF}^*(L_0,L_1)$ is well-defined and vanishes, then the images of the open-closed string maps for $L_0$ and $L_1$ must be mutually orthogonal with respect to the intersection pairing (by a form of the Cardy relation, see for instance \cite[Proposition 5.3 and Figure 2]{seidel12}). This implies (i) provided that all Lagrangian submanifolds involved have minimal Maslov number $> 2$, so that $\mathit{HF}^*(L_0,L_1)$ is always well-defined. To remove that additional assumption, one decomposes $\mathit{QH}^0(M)$ into eigenspaces of quantum multiplication with $c_1(M)$, and considers the Lagrangian submanifolds with any given Maslov index $2$ disc count separately, using \cite[Lemma 6.7]{auroux07}. 

It is instructive to compare the argument so far with Theorem \ref{th:folk}. What we have done is to replace the intersection pairing on $H_n(M)$ with that on each summand in \eqref{eq:semisimplicity} considered separately. The shift to even degrees ensures that the argument can be effective even if $n$ is odd, but at the same time prevents us from proving that the $[L_i]$ are nonzero (which is indeed false, even for $M = S^2$). Instead, for part (ii) of Theorem \ref{th:semisimple} one argues as follows. The class $[L]$ is the image of the unit element in $\mathit{HF}^0(L,L)$ under \eqref{eq:oc-map}. In particular, if $[L]$ is nonzero, then so is $\mathit{HF}^*(L,L)$. Now, for a collection $(L_1,\dots,L_r)$ as in the statement of the theorem, we get a decomposition of $\mathit{QH}^*(M)$ into direct summands, and each $[L_i]$ must lie in a different summand, which precludes having any nontrivial relations (in fact, not just relations over $\bK$, but ones with coefficients in $\mathit{QH}^0(M)$ as well).

\subsection{New results} 
We now turn to the actual substance of this paper. 

\begin{theorem} \label{th:1}
Let $M^{2n}$, $n > 1$ odd, be a (finite type complete) Liouville manifold. This should satisfy $c_1(M) = 0$, and we choose a trivialization of the anticanonical bundle $K_M^{-1}$. Assume that its ($\bZ$-graded) symplectic cohomology $\mathit{SH}^*(M)$, with coefficients in some field $\bK$, contains a dilation. Then there is a constant $N$ such that the following holds. Consider closed Lagrangian submanifolds in $M$ which are $\bK$-homology spheres and {\em Spin}. Suppose that $(L_1,\dots,L_r)$ is a collection of such submanifolds, which are pairwise disjoint (or disjoinable by Lagrangian isotopies). Then $r \leq N$.
\end{theorem}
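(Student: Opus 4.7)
The strategy is to imitate the argument for Theorem \ref{th:semisimple}, with the dilation taking over the role played there by semisimplicity of $\mathit{QH}^*(M)$. Because $c_1(M) = 0$ with a chosen trivialization of $K_M^{-1}$, $L_i$ is Spin, and $L_i$ is a $\bK$-homology sphere, the Floer cohomology $\mathit{HF}^*(L_i, L_i)$ is well-defined, $\bZ$-graded, and isomorphic to $H^*(L_i; \bK)$, which is one-dimensional in degrees $0$ and $n$ and zero elsewhere. To each $L_i$ I would associate the class $\alpha_i := \mathit{OC}(1_{L_i}) \in \mathit{SH}^n(M)$, where $\mathit{OC}$ is the open-closed string map and $1_{L_i} \in \mathit{HF}^0(L_i, L_i)$ is the unit. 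The plan is then to show that the $\alpha_i$ are linearly independent inside a finite-dimensional subspace $W \subset \mathit{SH}^*(M)$, whence $r \leq \dim W =: N$.

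The main new tool is a bilinear pairing on $W$. In the closed monotone case treated in Theorem \ref{th:semisimple} this was the Poincar\'e pairing on $\mathit{QH}^*(M)$, which is unavailable in the Liouville setting. The dilation $B \in \mathit{SH}^1(M)$ with $\Delta B = 1$ (where $\Delta$ is the BV operator) fills this gap: loosely speaking, $B$ trivializes the $S^1$-action on the free loop space and thereby equips a suitable finite-dimensional subquotient $W \subset \mathit{SH}^*(M)$ with a nondegenerate pairing $\langle \cdot, \cdot \rangle_B$. Finiteness of $\dim W$ uses the finite-type hypothesis on $M$. The Cardy relation for $\mathit{OC}$ then provides the first half of what is required: if $L_i$ and $L_j$ are disjoinable, so that $\mathit{HF}^*(L_i, L_j) = 0$, then $\langle \alpha_i, \alpha_j \rangle_B = 0$.

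The main obstacle is the matching nonvanishing statement $\langle \alpha_i, \alpha_i \rangle_B \neq 0$. Following the philosophy of the Seidel--Solomon dilation paper, I would reduce this to an algebraic computation inside the $A_\infty$-algebra structure on $\mathit{HF}^*(L_i, L_i)$, enhanced by the extra operations coming from $B$. Since for a Spin $\bK$-homology sphere of odd dimension $n > 1$ this cohomology is two-dimensional as a $\bK$-vector space, the relevant algebra is determined by only a few $A_\infty$-coefficients; the dilation condition $\Delta B = 1$, transported to $\mathit{HF}^*(L_i, L_i)$ via the closed-open map, forces one of these coefficients to be nonzero, which in turn yields $\langle \alpha_i, \alpha_i \rangle_B \neq 0$. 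Oddness of $n$ is essential here, since for even $n$ the analogous pairing would land in a degree where it is automatically zero. Given both orthogonality and nonvanishing, the Gram matrix of $(\alpha_1, \ldots, \alpha_r)$ with respect to $\langle \cdot, \cdot \rangle_B$ is diagonal with nonzero entries, so the $\alpha_i$ are linearly independent in $W$ and the bound $r \leq \dim W$ follows.
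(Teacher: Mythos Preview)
Your overall strategy matches the paper's: associate to each $L_i$ a class in a finite-dimensional vector space carrying a pairing, then show the Gram matrix is diagonal with nonzero diagonal entries. However, several of the concrete pieces are misidentified, and as stated the argument would not go through.

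First, the classes do not live in $\mathit{SH}^n(M)$, and they are not simply $\mathit{OC}(1_{L_i})$. The paper works in the cohomology $\tilde{H}^*$ of the mapping cone of multiplication by $-B:\mathit{HF}^*(-\mu)\to\mathit{HF}^*(\mu)$ for suitable finite slopes $\pm\mu$; this is automatically finite-dimensional, so no ``subquotient of $\mathit{SH}^*$'' has to be extracted. The class $\lbr\tilde{L}_i\rbr\in\tilde{H}^n$ requires an additional datum, a \emph{$B$-equivariant structure} on $L_i$: a cochain $\gamma_{L_i}$ bounding the image of a cocycle for $B$ under the closed-open map. This exists precisely because $H^1(L_i;\bK)=0$. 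The pairing $I$ on $\tilde{H}^*$ is built from the duality between $\mathit{HF}^*(\pm\mu)$ together with the secondary product $\ast$ that encodes the chain-level noncommutativity of the pair-of-pants product; the phrase ``trivializing the $S^1$-action'' does not pin this down.

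Second, the mechanism for the diagonal nonvanishing is not an $A_\infty$ computation. The Cardy-type relation (Theorem~\ref{th:cardy2}) identifies $I(\lbr\tilde{L}_0\rbr,\lbr\tilde{L}_1\rbr)$, up to sign, with the supertrace of an endomorphism $\Phi_{\tilde{L}_0,\tilde{L}_1}$ of $\mathit{HF}^*(L_0,L_1)$ built from $B$ and the equivariant structures. Off-diagonal vanishing is then immediate from $\mathit{HF}^*(L_i,L_j)=0$. For the diagonal, $\Phi_{\tilde{L},\tilde{L}}$ is a derivation, hence kills $\mathit{HF}^0(L,L)\cong\bK$, while the dilation condition $\Delta B=1$ forces it to act as the identity on $\mathit{HF}^n(L,L)\cong\bK$; since $L$ is a $\bK$-homology sphere there are no other degrees, and the supertrace is $(-1)^n\neq 0$. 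Finally, your explanation of the role of oddness is incorrect: the paper observes that this proof works for all $n>1$; the restriction to odd $n$ is only imposed because for even $n$ the conclusion is no stronger than the elementary Theorem~\ref{th:folk}.
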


The notion of dilation comes from \cite{seidel-solomon10}. The {\em Spin} assumption on Lagrangian submanifolds arises as usual from sign considerations in Floer theory, and one can drop it in $\mathrm{char}(\bK) = 2$. That would make no difference in the context of Theorem \ref{th:1}, since a homology sphere over a field of characteristic $2$ must be {\em Spin} anway. However, one can do a little better by exploiting fortuitous cancellations. Recall that the ($\bF_2$-coefficient) Kervaire semi-characteristic of a closed manifold of odd dimension $n$ is \cite{kervaire56,lusztig-milnor-peterson69} 
\begin{equation} \label{eq:kervaire}
\chi_{1/2}(L) = \sum_{i=0}^{(n-1)/2} \mathrm{dim}\, H^i(L;\bF_2) \in \bF_2.
\end{equation}
Semi-characteristics have appeared before in the context of totally real embeddings \cite{audin89}, but that has apparently nothing to do with our result, which is the following:

\begin{theorem} \label{th:2}
Let $M$ be as in Theorem \ref{th:1}, and suppose that $\mathrm{char}(\bK) = 2$. Replace the topological assumptions on closed Lagrangian submanifolds $L \subset M$ with the following weaker ones:
\begin{equation} \label{eq:semi}
H^1(L;\bK) = 0\;\; \text{ and }\;\; 
\chi_{1/2}(L) = 1.
\end{equation}
Then the same conclusion will hold.
\end{theorem}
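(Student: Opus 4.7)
The plan is to revisit the argument for Theorem \ref{th:1} and isolate exactly where the hypothesis that $L$ is a $\bK$-homology sphere is used, then check that in characteristic $2$ the weaker conditions in \eqref{eq:semi} suffice to keep the argument running.

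First I would reconstruct the dilation-induced structure on $L$. The dilation $B \in \mathit{SH}^1(M)$ gives, via the closed-open string map, a class $\beta_L \in \mathit{HF}^1(L,L)$ for every closed Lagrangian $L$. Under the standard identification $\mathit{HF}^*(L,L) \cong H^*(L;\bK)$ (for which the \textit{Spin} assumption is unnecessary when $\mathrm{char}(\bK) = 2$), this class lives in $H^1(L;\bK)$. The first condition in \eqref{eq:semi} then forces $\beta_L = 0$, exactly as in the homology-sphere case of Theorem \ref{th:1}.

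Next I would push the dilation mechanism (in the spirit of \cite{seidel-solomon10}) past this vanishing. The identity $\Delta B = 1$ in $\mathit{SH}^*(M)$, together with $\beta_L = 0$, should produce a distinguished element $\Theta_L$ in a fixed finite-dimensional piece of $\mathit{SH}^*(M)$, whose nontriviality is detected by an Euler-type trace on $\mathit{HF}^*(L,L) \cong H^*(L;\bK)$. For odd $n$, the ordinary Euler characteristic vanishes identically; in characteristic $2$, however, Poincar\'e duality pairs $H^i(L;\bK)$ with $H^{n-i}(L;\bK)$ symmetrically, so the trace should collapse to the Kervaire semi-characteristic $\chi_{1/2}(L) \in \bF_2$. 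The hypothesis $\chi_{1/2}(L) = 1$ then gives $\Theta_L \neq 0$. A Cardy-type argument — the same one used to close out Theorem \ref{th:1}, parallel in spirit to that sketched after Theorem \ref{th:semisimple} — shows that for Lagrangians pairwise disjoint (or disjoinable by Lagrangian isotopies) the classes $\Theta_{L_i}$ are mutually orthogonal with respect to a fixed bilinear form on a fixed finite-dimensional quotient of $\mathit{SH}^*(M)$, yielding $r \leq N$ with the same $N$ as in Theorem \ref{th:1}.

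The main obstacle is the second step: identifying the trace invariant produced by the dilation argument with $\chi_{1/2}(L)$ precisely, rather than with some other characteristic-$2$ invariant of $L$. This requires careful bookkeeping of degrees, of the BV operator, and of the way the vanishing $\beta_L = 0$ simplifies the trace identity (a nonzero $\beta_L$ would introduce correction terms — potentially involving $\beta_L \cup \beta_L$ and iterated BV brackets — that have to be shown to vanish under the weakened hypothesis $H^1(L;\bK)=0$ alone). Once that identification is in place, the rest of the proof proceeds exactly as in Theorem \ref{th:1}, with the same constant $N$.
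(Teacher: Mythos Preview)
Your proposal is correct and follows essentially the same route as the paper: the condition $H^1(L;\bK)=0$ makes each $L_i$ $B$-equivariant, one forms the classes $\lbr\tilde{L}_i\rbr$ in the finite-dimensional space $\tilde{H}^n$, the Cardy relation (Theorem~\ref{th:cardy2}) identifies $I(\lbr\tilde{L}_i\rbr,\lbr\tilde{L}_j\rbr)$ with $\tilde{L}_i\bullet\tilde{L}_j$, and disjointness kills the off-diagonal entries while $\chi_{1/2}(L_i)=1$ makes the diagonal nonzero. For your ``main obstacle,'' the paper's mechanism is Corollary~\ref{th:duality}: the dilation condition forces $\Phi_{\tilde L,\tilde L}$ in degree $k$ to be Poincar\'e-dual to $\mathrm{id}-\Phi_{\tilde L,\tilde L}$ in degree $n-k$, so $\mathrm{Tr}(\Phi^k)+\mathrm{Tr}(\Phi^{n-k})=\dim H^k(L;\bK)\bmod 2$, and summing over $k\le (n-1)/2$ gives exactly $\chi_{1/2}(L)$.
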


%

Making the bound $N$ explicit depends on an understanding of the geometry of the dilation. In particularly simple cases, one may be able to make a connection with the ordinary topology of $M$. Here is an instance where that is possible:
%

\begin{theorem} \label{th:3}
Take an affine algebraic hypersurface $\{p(z_1,\dots,z_{n+1}) = 0\} \subset \bC^{n+1}$, with $n$ odd, which has an isolated singular point at the origin. Suppose that the Hessian of the defining polynomial at the singular point satisfies
\begin{equation} \label{eq:quadric-add}
\mathrm{rank}(D^2p_{z = 0}) \geq 3.
\end{equation}
Let $M$ be the Milnor fibre of that singularity. Consider Lagrangian submanifolds $L \subset M$ which are $\bQ$-homology spheres and {\em Spin}. If $(L_1,\dots,L_r)$ is a collection of such submanifolds which are pairwise disjoint (or disjoinable by Lagrangian isotopies), then the classes $[L_i] \in H_n(M;\bQ)$ are linearly independent.
\end{theorem}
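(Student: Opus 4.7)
My plan is to first verify that $M$ falls within the scope of Theorem~\ref{th:1} and obtain the uniform bound $r \leq N$, and then to upgrade this to linear independence by exploiting the specific geometric origin of the dilation. The holomorphic splitting lemma applied at the origin, together with $\mathrm{rank}(D^2 p_0) \geq 3$, allows one to write $p = z_1^2 + z_2^2 + z_3^2 + r(z_4, \ldots, z_{n+1})$ in suitable coordinates; this realizes $M$, up to symplectic deformation, as the Milnor fibre of a singularity obtained from that of $r$ (possibly trivial) by adding three quadratic terms. A construction of Seidel--Solomon, combined with the Thom--Sebastiani compatibility of symplectic cohomology, shows that such triple quadratic stabilizations carry a dilation in $\mathit{SH}^*(M;\bQ)$. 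A $\bQ$-homology sphere of odd dimension $n$ has $\chi(L_i) = 2$, which is invertible in $\bQ$, so Theorem~\ref{th:1} applies and produces the bound $r \leq N$.

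To strengthen that bound to linear independence, I would revisit the Floer-theoretic pairing underlying Theorem~\ref{th:1}. A dilation $b \in \mathit{SH}^1(M;\bQ)$ produces, via the open-closed string map $\mathcal{OC}$ together with a Cardy-type computation on a cylinder carrying a $b$-insertion, a bilinear form on the subspace of $\mathit{SH}^n(M;\bQ)$ spanned by the images $\mathcal{OC}(1_{L_i})$ of the units. The Cardy relation forces the off-diagonal entries to vanish whenever $L_i$ and $L_j$ can be made disjoint, since then $\mathit{HF}^*(L_i, L_j) = 0$, while the diagonal entries evaluate, up to sign, to $\chi(L_i) = 2 \neq 0$. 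The resulting $r \times r$ Gram matrix is therefore diagonal and invertible over $\bQ$, so $\mathcal{OC}(1_{L_1}), \ldots, \mathcal{OC}(1_{L_r})$ are linearly independent in $\mathit{SH}^n(M;\bQ)$.

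To descend to $H_n(M;\bQ)$, I would use that for a Liouville Milnor fibre the ordinary cohomology $H^n(M;\bQ)$ embeds into $\mathit{SH}^n(M;\bQ)$ as the lowest piece of the action filtration, and that $\mathcal{OC}(1_L)$ lies in this piece with topological shadow given, up to Poincar\'e--Lefschetz duality, by $[L]$. The main obstacle is precisely this compatibility step: one must check that the Cardy pairing constructed above is genuinely controlled by the ordinary homology classes of the $L_i$, and that higher-action Reeb contributions neither contaminate the diagonal evaluation nor introduce spurious off-diagonal terms. This is particularly delicate because the ordinary intersection form on $H_n$ of a Milnor fibre is skew-symmetric for $n$ odd and hence kills self-intersections of spheres, so the whole argument must rely on the enriched pairing coming from $b$ rather than on topology alone. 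Once this compatibility is established, linear independence of the $\mathcal{OC}(1_{L_i})$ in $\mathit{SH}^n(M;\bQ)$ descends to linear independence of the $[L_i]$ in $H_n(M;\bQ)$.
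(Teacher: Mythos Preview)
Your proposal contains a fatal arithmetic error that collapses the whole argument. You claim that a $\bQ$-homology sphere of odd dimension $n$ has $\chi(L_i) = 2$, but for $n$ odd one has $\chi(S^n) = 1 + (-1)^n = 0$. So the Gram matrix you write down would have vanishing diagonal, and your linear independence argument fails. This is not a cosmetic slip: the entire point of the paper's machinery is that the ordinary intersection form (and hence the Euler characteristic) is useless for odd $n$, and one must replace it by a genuinely different pairing. In the paper, the diagonal entry is $\tilde{L}\bullet\tilde{L} = \mathrm{Str}(\Phi_{\tilde{L},\tilde{L}}) = (-1)^n$ for a homology sphere, computed from the fact that $\Phi_{\tilde{L},\tilde{L}}$ acts as $0$ on $H^0(L)$ and as the identity on $H^n(L)$ (Corollary~\ref{th:degree-0} and Lemma~\ref{th:degree-n}). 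That asymmetry between degrees $0$ and $n$ is exactly what the dilation buys you, and it is absent from any Cardy argument that only sees $\chi(L)$.

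Beyond that error, your outline also diverges from the paper in two structural ways. First, the paper does not work in $\mathit{SH}^n(M)$ directly but in a twisted group $\tilde{H}^n$ built from the mapping cone of multiplication by $B$ on $\mathit{HF}^*(\pm\mu)$; the pairing $I$ lives there, and the passage from $\tilde{H}^n$ down to $\mathit{HF}^n(-\mu)$ uses that the image of $\mathit{HF}^n(\mu)$ is isotropic together with the \emph{definiteness} of the diagonal matrix over $\bQ \subset \bR$ (Corollary~\ref{th:linearly-independent}). Your sketch has no analogue of this isotropic/definite step. Second, the descent from Floer theory to $H_n(M;\bQ)$ is not done via an action-filtration argument in $\mathit{SH}^*$, which as you yourself flag would be delicate. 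Instead the paper establishes property~(H), whose clause~(iii) gives an isomorphism $H^*(M;\bK) \cong \mathit{HF}^*(\mu)$ for a specific finite slope $\mu$; dualizing, $\mathit{HF}^n(-\mu) \to H^n_{\mathit{cpt}}(M;\bK)$ is an isomorphism carrying $\lbr L_i\rbr$ to $[L_i]$. This is obtained by iterated Lefschetz fibrations (Corollary~\ref{th:property-h}) reducing to $T^*S^2$, not by Thom--Sebastiani for symplectic cohomology.
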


\begin{theorem} \label{th:4}
Take $M$ as in Theorem \ref{th:3}. Fix a field $\bK$ of odd positive characteristic. Consider Lagrangian submanifolds $L \subset M$ which are $\bK$-homology spheres and {\em Spin}. If $(L_1,\dots,L_r)$ is a collection of such submanifolds which are pairwise disjoint (or disjoinable by Lagrangian isotopies), then the classes $[L_i] \in H_n(M;\bK)$ span a subspace of dimension $\geq r/2$ (in particular, by setting $r = 1$ one sees that each $[L_i]$ must be nonero).
\end{theorem}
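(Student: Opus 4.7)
The plan is to adapt the argument of Theorem~\ref{th:3} to odd positive characteristic, where the resulting Floer-theoretic quadratic form loses its definiteness and so can carry nontrivial isotropic vectors. The assumption $\mathrm{rank}(D^2p) \geq 3$ supplies, via iterated quadratic suspension, a dilation $B \in \mathit{SH}^1(M)$ of the same type that drives Theorem~\ref{th:3}.

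First, for each pairwise-disjoint collection $(L_1,\dots,L_r)$ of Lagrangian $\bK$-homology spheres, I would use $B$ together with the open-closed string map to produce a symmetric bilinear form $Q$ on $\bK^r$ satisfying: (a) $Q(e_i,e_i) = \pm 2 \in \bK$, the Seidel--Solomon ``$q$-self-intersection'' of a Lagrangian homology sphere in the presence of a dilation; and (b) $Q(e_i,e_j) = 0$ for $i \neq j$, by a Cardy-type argument exploiting $\mathit{HF}^*(L_i,L_j)=0$ on disjoint pairs. In odd characteristic these facts make $Q$ diagonal and nondegenerate.

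Second, I would show that every vector $a=(a_1,\dots,a_r) \in \bK^r$ with $\sum_i a_i[L_i] = 0$ in $H_n(M;\bK)$ is $Q$-isotropic. Since $Q$ is nondegenerate on $\bK^r$, a totally $Q$-isotropic subspace has dimension $\leq r/2$. Thus $\dim \ker(\bK^r \to H_n(M;\bK)) \leq r/2$, and the span of $[L_1],\dots,[L_r]$ has dimension $\geq r/2$. In characteristic zero (Theorem~\ref{th:3}) the same form is definite over $\bQ$ after aligning Spin structures, so there are no nontrivial isotropic vectors at all and the map $\bQ^r \hookrightarrow H_n(M;\bQ)$ is injective, giving full linear independence. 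The weaker bound here reflects only the failure of definiteness over $\bF_p$, where the diagonal form $\sum 2 a_i^2$ admits nontrivial zeros.

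The main obstacle will be the isotropy claim in the second step. If $Q$ were induced from a bilinear form on $H_n(M;\bK)$, isotropy would be automatic, but so would $\bK^r \hookrightarrow H_n(M;\bK)$, a strictly stronger conclusion than what is claimed. So $Q$ must be genuinely a chain-level construction that does not descend to homology, and the isotropy of the homological kernel has to be established by a separate Floer-theoretic argument, presumably by using a null-cobordism realizing the relation $\sum_i a_i[L_i]=0$ to cancel the relevant disc counts. Obtaining this controlled interplay between the chain-level pairing and homological relations, especially with signs in characteristic $p$, is where the technical work lies.
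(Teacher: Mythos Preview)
Your overall architecture is right: a nondegenerate pairing on $\bK^r$ pulled back from a Floer-theoretic construction, together with the fact that homological relations among the $[L_i]$ land in an isotropic subspace, yields the $r/2$ bound. You also correctly locate the difficulty in the isotropy claim. But your proposed mechanism for it---a null-cobordism realizing $\sum a_i[L_i]=0$ and cancelling disc counts---is not how the paper proceeds, and it is unclear how to make such an argument work.

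The paper resolves the isotropy issue structurally rather than geometrically. It introduces a twisted Floer group $\tilde{H}^*$, the cohomology of the mapping cone of multiplication by $B: \mathit{CF}^*(-\mu) \to \mathit{CF}^{*+1}(\mu)$, equipped with a (not symmetric) pairing $I$. Each $B$-equivariant Lagrangian $\tilde{L}_i$ has a lifted class $\lbr \tilde{L}_i \rbr \in \tilde{H}^n$, and a Cardy-type identity (Theorem~\ref{th:cardy2}) gives $I(\lbr \tilde{L}_i \rbr, \lbr \tilde{L}_j \rbr) = (-1)^{n(n+1)/2}\,\tilde{L}_i \bullet \tilde{L}_j$, which by Corollary~\ref{th:new-main-properties}(iv) and Corollary~\ref{th:basic} is $\pm\delta_{ij}$ (units, not $\pm 2$). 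The crucial point is that $\tilde{H}^n$ sits in a long exact sequence between $\mathit{HF}^n(\mu)$ and $\mathit{HF}^n(-\mu)$, and the kernel of the projection $\tilde{H}^n \to \mathit{HF}^n(-\mu)$ is exactly the image of $\mathit{HF}^n(\mu)$, which is $I$-isotropic by an elementary duality computation (Lemma~\ref{th:pairing-1}). Property~(H), available for these Milnor fibres by Example~\ref{th:stabilized-milnor-fibre}, identifies $\mathit{HF}^n(-\mu) \cong H^n_{\mathit{cpt}}(M;\bK)$ and sends $\lbr L_i \rbr$ to $[L_i]$. So a relation $\sum a_i[L_i]=0$ forces $\sum a_i \lbr \tilde{L}_i \rbr$ into the image of $\mathit{HF}^n(\mu)$, hence into an $I$-isotropic subspace---no cobordism required.

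In short, the missing idea is that the pairing lives not on $\bK^r$ directly but on an ambient space $\tilde{H}^n$ whose mapping-cone origin makes the isotropy of the relevant kernel automatic.
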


\begin{theorem} \label{th:5}
Take $M$ as in Theorem \ref{th:3}, but sharpening \eqref{eq:quadric-add} to $\mathrm{rank}(D^2p_{z = 0}) \geq 4$. Take a field $\bK$ of characteristic $2$. If we consider Lagrangian submanifolds $L \subset M$ which satisfy \eqref{eq:semi}, then the same conclusion as in Theorem \ref{th:4} holds.
\end{theorem}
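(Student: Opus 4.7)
The strategy mirrors that of Theorem \ref{th:4}, with characteristic $2$ ingredients substituted throughout. There are two moving parts: producing a dilation in $\mathit{SH}^*(M;\bK)$, and extracting the quantitative rank bound from the open-closed/Cardy formalism via Theorem \ref{th:2}.

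For the dilation, I would first use the rank condition to place $p$ in stabilized form: after a holomorphic change of coordinates at the singular point, write $p(z) = z_1^2 + z_2^2 + z_3^2 + z_4^2 + \tilde p(z_5,\dots,z_{n+1})$ with $\tilde p$ having an isolated singularity (or being absent entirely). This exhibits $M$, up to symplectomorphism, as a four-fold quadratic stabilization of the Milnor fibre $\widetilde M$ of $\tilde p$. A Künneth-type computation for symplectic cohomology, identical in shape to the one used in the proof of Theorem \ref{th:4}, then transports the canonical dilation on the four-variable $A_1$ Milnor fibre $\{z_1^2+\cdots+z_4^2 = 1\}$ to a dilation in $\mathit{SH}^*(M;\bK)$. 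The step up from rank $\geq 3$ (sufficient in odd characteristic) to rank $\geq 4$ is forced by a sign cancellation in the odd-characteristic argument that fails modulo $2$: in three quadratic variables the candidate dilation class becomes $2$-torsion and vanishes, whereas in four variables the class remains non-zero even over $\bF_2$, and this can be checked by an explicit BV-operator calculation on the relevant Weinstein handle.

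With the dilation established, Theorem \ref{th:2} yields an a priori finite bound $r \leq N$; the topological hypotheses \eqref{eq:semi} on the $L_i$ are precisely those needed. To upgrade this to $\dim_\bK \mathrm{span}\{[L_1],\dots,[L_r]\} \geq r/2$, I would not apply Theorem \ref{th:2} as a black box, but rather re-enter its proof with the geometry of $M$ visible. The dilation together with the open-closed map $\mathit{HF}^*(L,L) \to \mathit{SH}^{*+n}(M)$ produces a bilinear pairing on the relevant subspace of $H_n(M;\bK)$; the hypothesis $\chi_{1/2}(L_i) = 1$ forces the diagonal entries of the Gram matrix of this pairing on $\{[L_i]\}$ to be non-zero, while $H^1(L_i;\bK) = 0$ combined with a Cardy relation (using disjoinability) controls the off-diagonal entries. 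A rank estimate on the resulting matrix, formally identical to the one used in the proof of Theorem \ref{th:4}, then gives the $r/2$ bound; the factor $\tfrac{1}{2}$ has the same origin as there, reflecting the asymmetry of the Cardy disk after the dilation is inserted at one of its two boundary marked points.

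The main obstacle, and the only genuinely new content beyond Theorem \ref{th:4}, is the dilation existence under the rank $\geq 4$ hypothesis in characteristic $2$. Verifying non-vanishing of the candidate class in $\mathit{SH}^*(M;\bK)$ requires careful tracking of the Koszul-type resolution of the four-variable quadric and of the BV operator modulo $2$; once this is done, the remaining arguments are formal adaptations of those already in place.
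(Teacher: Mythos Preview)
Your overall strategy is right, but the dilation construction has a genuine gap. A quadratic stabilization $p(z) = z_1^2 + \cdots + z_4^2 + \tilde p(z_5,\dots,z_{n+1})$ does \emph{not} make the Milnor fibre of $p$ into a product of the Milnor fibre of $z_1^2+\cdots+z_4^2$ with that of $\tilde p$; there is no K\"unneth formula available here. (The relevant structure is Thom--Sebastiani type, which on the symplectic side is a join or suspension rather than a product.) So the step ``transport the canonical dilation on the four-variable $A_1$ Milnor fibre via K\"unneth'' does not go through as written.

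The paper's route is different and avoids this: each single stabilization $z_1^2 + q(z_2,\dots)$ makes the Milnor fibre of $p$ into the total space of an exact symplectic Lefschetz fibration whose fibre is the Milnor fibre of $q$ restricted to a generic hyperplane (Example~\ref{th:stabilized-milnor-fibre}). Iterating this and applying Corollary~\ref{th:property-h}, one reduces to the base case $T^*S^{k}$ for $k = \mathrm{rank}(D^2p)_{z=0} - 1$. The sharpening to $\mathrm{rank} \geq 4$ in characteristic~$2$ is then exactly the statement (Example~\ref{th:cotangent-sphere}) that $T^*S^2$ fails to admit a dilation over $\bF_2$ while $T^*S^3$ does. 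Your heuristic ``the candidate class becomes $2$-torsion'' is morally in the right direction, but the actual verification goes through the loop-space isomorphism \eqref{eq:string-topology} rather than a Koszul resolution.

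For the second half, your outline is essentially correct but vaguer than necessary. The paper packages the whole thing as ``property (H)'', which includes the crucial fact that $\mathit{HF}^n(-\mu) \to H^n_{\mathit{cpt}}(M;\bK)$ is an isomorphism; this is what converts the $r/2$ bound on the span of the $\lbr L_i\rbr$ in $\mathit{HF}^n(-\mu)$ (Corollary~\ref{th:linearly-independent}) into the same bound for the ordinary classes $[L_i]$. The nonvanishing of the diagonal entries via $\chi_{1/2}(L_i)=1$ is exactly Corollary~\ref{th:new-main-properties}(iii), as you say.
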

%

Obviously, \eqref{eq:quadric-add} holds if $p$ is triply stabilized, which means that
\begin{equation} \label{eq:3-stabilize}
p(z) = z_1^2 + z_2^2 + z_3^2 + \tilde{p}(z_4,\dots,z_{n+1}); 
\end{equation}
and conversely, any polynomial satisfying \eqref{eq:quadric-add} can be brought into the form \eqref{eq:3-stabilize} by a local holomorphic coordinate change \cite[Vol.~I, Section 11.1]{arnold-gusein-zade-varchenko}. 

\begin{example}
As a concrete example, take the $(A_m)$ singularity of odd dimension $n > 1$. In that case, we nearly recover Theorem \ref{th:equivariant}(i) (the missing piece would be an extension of Theorem \ref{th:5} to the lowest dimension $n = 3$, to show that the homology classes are nonzero mod $2$). 
If $(L_1,\dots,L_r)$ is a collection of Lagrangian $\bQ$-homology spheres which are {\em Spin} and pairwise disjoinable, then Theorem \ref{th:3}, together with the fact that the subspace of $H_n(M;\bQ)$ spanned by $([L_1],\dots,[L_r])$ must be isotropic for the intersection pairing, yields the bound
\begin{equation}
r \leq \left[\frac{m+1}{2}\right].
\end{equation}
This is sharp, and much better than what one would get from applying Theorem \ref{th:equivariant}(ii) (which on the other hand has no counterpart among our results). 
\end{example}

In spite of these discrepancies between our results and those of \cite{seidel12} (which could probably be narrowed by investing more work on both sides), there is a fundamental similarity between our notion of dilation (giving rise to an infinitesimal symmetry of the Fukaya category) and the $\bC^*$-actions used in that paper. We refer interested readers to \cite[Lectures 13--19]{seidel13b}, where both viewpoints are considered. 

Concerning the comparison with Theorem \ref{th:semisimple} and other results of that nature, we were unable to find a substantial relation between them and our approach. There is some common philosophical ground, in that both approaches are based on replacing the intersection pairing in middle-dimensional homology with another one, which has different symmetry properties; and that for this replacement to work, strong restrictions on the class of symplectic manifolds under consideration have to be imposed. However, in Theorem \ref{th:semisimple} the replacement is essentially the even-dimensional cohomology, whereas in our case we remain in the middle dimension but replace cohomology with a different space, built from symplectic cohomology. 

Finally, note that in all our theorems, we have required the (complex) dimension $n$ to be odd. In fact, the proof of Theorem \ref{th:1} works for all $n>1$; and there is an analogue of Theorem \ref{th:2} for even $n$, which involves half of the ordinary Euler characteristic instead of the semi-characteristic (in the proof, part (ii) of Corollary \ref{th:new-main-properties} would be used instead of (iii), with the rest of the argument remaining the same). Similar remarks apply to Theorems \ref{th:3}--\ref{th:5}. However, the resulting statements are not stronger than what one can get in an elementary way, which means from Theorem \ref{th:folk}.

\subsection{Contents}
The structure of this paper is as follows. Section \ref{sec:dilations} is aimed at readers interested in the strength and applicability of our results. For that purpose, the main aim is to understand what dilations are. As one important example, this includes the construction of dilations on the Milnor fibres appearing in Theorems \ref{th:3}--\ref{th:5}. Very little of this material is new, but we have tweaked the presentation from \cite{seidel-solomon10} slightly to make it more convenient for our purpose.

Section \ref{sec:strategy} is aimed at readers primarily interested in seeing the overall structure of the argument. We introduce a number of additional algebraic structures associated to Hamiltonian Floer cohomology groups, and state their properties without proof. Those properties, when combined with the ones introduced in \cite{seidel-solomon10}, lead directly to the theorems stated above. 

In Section \ref{sec:operations} we flesh out the argument, which largely means specifying the families of Riemann surfaces which give rise to the various (cochain level) operations underlying our constructions. Section \ref{sec:technical} provides selected technical details. 

The last part, Section \ref{sec:motivation}, placed there so as not to interrupt the main expository thread, mentions some parallel constructions in other parts of mathematics. This is not strictly necessary for our argument, but can provide additional motivation.

\subsection{Acknowledgments} This work benefited from conversations that the author had with Mohammed Abouzaid, Ailsa Keating, and Ivan Smith. Major expository changes were made following referees' reports on the initial version of the manuscript. Partial support was provided by NSF grant DMS-1005288, and by a Simons Investigator Award from the Simons Foundation.

\section{Background\label{sec:dilations}}

\subsection{Hamiltonian Floer cohomology}
Our geometric setup for Floer cohomology is almost identical to that in \cite[Section 3]{seidel-solomon10}, but we reproduce it to make the discussion more self-contained. 

\begin{setup} \label{th:setup}
Let $(M,\omega_M)$ be a non-compact symplectic manifold, together with an exhausting (proper and bounded below) function $H_M \in \smooth(M,\bR)$. Let $X_M$ be the Hamiltonian vector field of $H_M$. We write $\scrP_M \subset \bR$ for the set of those $\lambda$ such that the $1$-periodic orbits of $\lambda X_M$ are not contained in a compact subset of $M$. One always has $0 \in \scrP_M$; and $\lambda \in \scrP_M$ iff $-\lambda \in \scrP_M$. We will assume throughout that $H_M$ is such that $\scrP_M$ has measure zero (so that in particular, $\bR \setminus \scrP_M$ is unbounded); note that this implies that all critical points of $H_M$ must be contained in a compact subset.

The other assumptions are of a more technical kind, and needed in order to get the Floer-theoretic machinery off the ground in the desired form. First of all we assume that $M$ is exact, meaning that $\omega_M = d\theta_M$ for some fixed $\theta_M$. We also want it to satisfy $c_1(M) = 0$, and fix a trivialization (up to homotopy) of the anticanonical bundle $K_M^{-1} = \Lambda_{\bC}^{\mathit{top}}(TM)$. 

Finally, we need some property that prevents solutions of Floer-type equations from escaping to infinity. This holds with respect to some fixed compatible almost complex structure $I_M$. Namely, for every compact subset $K \subset M$ there should be another such subset $\tilde{K} \subset M$, such that the following holds. Suppose that $S$ is a connected compact Riemann surface with nonempty boundary, $\nu_S$ a real one-form on it such that $d\nu_S \leq 0$, and $u: S \rightarrow M$ a solution of
\begin{equation} \label{eq:simple-dbar}
(du - X_M \otimes \nu_S)^{0,1} = 0,
\end{equation}
where the $(0,1)$-part is taken with respect to $I_M$. Then, what we want is that
\begin{equation} \label{eq:convexity}
u(\partial S) \subset K \;\;\Longrightarrow\;\;
u(S) \subset \tilde{K}.
\end{equation}
\end{setup}

\begin{example} \label{th:liouville}
The most important class of examples are (finite type complete) Liouville manifolds. These are $(M,\omega_M,\theta_M,H_M)$ where the Liouville vector field $Z_M$ dual to $\theta_M$ satisfies
\begin{equation}
Z_M.H_M = H_M \;\; \text{outside a compact subset.}
\end{equation}
This implies that a sufficiently large level set $N = H_M^{-1}(c)$, $c \gg 0$, is a closed contact type hypersurface. Moreover, the part of $M$ lying outside that hypersurface, which is $H_M^{-1}([c,\infty))$, can be identified with the positive half of the symplectization of $N$. With respect to this identification, $X_M$ is $c$ times the Reeb vector field for $\theta_M|N$. Hence, $c\scrP_M \cap \bR^{>0}$ is the set of periods of Reeb orbits (including multiples). 

One uses an almost complex structure $I_M$ such that $dH_M \circ I_M = -\theta_M$ outside a compact subset, and derives the required property \eqref{eq:convexity} from a maximum principle argument. Of course, the condition $c_1(M) = 0$ still has to be imposed separately.
\end{example}

The first step is to define Floer cohomology groups $\mathit{HF}^*(\lambda)$ for each $\lambda \in \bR \setminus \scrP_M$. To do that, choose a time-dependent function $H_\lambda \in \smooth(S^1 \times M,\bR)$, $S^1 = \bR/\bZ$, with associated time-dependent vector field $X_\lambda$. This should satisfy $H_{\lambda,t} = \lambda H_M$ outside a compact subset, and we also require all solutions of
\begin{equation} \label{eq:periodic-orbit}
\left\{\begin{aligned} & x: S^1 \longrightarrow M, \\ &
dx/dt = X_{\lambda,t}(x) \end{aligned} \right.
\end{equation}
to be nondegenerate. 
Choose a time-dependent compatible almost complex structure $J_{\lambda}$ such that $J_{\lambda,t} = I_M$ outside a compact subset of $M$, and for which all solutions of Floer's equation are nondegenerate. Fix an arbitrary coefficient field $\bK$. We then define $\mathit{CF}^*(\lambda) = \mathit{CF}^*(H_\lambda,J_\lambda)$ to be the associated Floer complex, which is a finite-dimensional $\bZ$-graded complex of $\bK$-vector spaces, with generators (at least up to sign) corresponding bijectively to solutions of \eqref{eq:periodic-orbit}. The differential will be denoted by $d$. Its cohomology $\mathit{HF}^*(\lambda)$ is independent of $(H_\lambda,J_\lambda)$ up to canonical isomorphism. Note that its Euler characteristic is not interesting:
\begin{equation}
\chi(\mathit{HF}^*(\lambda)) = \chi(M) \;\; \text{for all $\lambda \in \bR \setminus \scrP_M$.}
\end{equation}

Even though Floer cohomology is independent of all choices, we find it convenient to co-ordinate those choices in a particular way, namely to require that 
\begin{equation} \label{eq:swap-sign}
(H_{-\lambda,t},J_{-\lambda,t}) = (-H_{\lambda,-t},J_{\lambda,-t}). 
\end{equation}
for all $\lambda$. Then, there is a canonical nondegenerate pairing
\begin{equation} \label{eq:chain-duality}
\langle \cdot,\cdot \rangle\; : \; \mathit{CF}^*(\lambda) \otimes \mathit{CF}^{2n-*}(-\lambda) \longrightarrow \bK.
\end{equation}
We use the same notation for the induced cohomology level pairing, which gives rise to a Poincar{\'e} duality type isomorphism
\begin{equation} \label{eq:poincare-duality-hf}
\mathit{HF}^*(-\lambda) \iso \mathit{HF}^{2n-*}(\lambda)^\vee.
\end{equation}

The next observation is that Floer cohomology groups come with continuation maps \cite{salamon-zehnder92}
\begin{equation} \label{eq:continuation-map}
\mathit{HF}^*(\lambda_1) \longrightarrow \mathit{HF}^*(\lambda_0), \;\; \lambda_0 \geq \lambda_1.
\end{equation}
To define these, one takes the cylinder $S = \bR \times S^1$ with coordinates $(s,t)$, and equips it with a one-form $\nu_S$ such that $\nu_S = \lambda_0 \, \mathit{dt}$ for $s \ll 0$, $\nu_S = \lambda_1\,\mathit{dt}$ for $s \gg 0$, and $d\nu_S \leq 0$ everywhere. One then chooses a perturbation datum, namely a section $K_S$ of the pullback bundle $T^*S \rightarrow S \times M$ (or equivalently a one-form on $S$ with values in the space of functions on $M$), such that $K_S = H_{\lambda_0,t} \, \mathit{dt}$ for $s \ll 0$, $K_S = H_{\lambda_1,t}\,\mathit{dt}$ for $s \gg 0$, and 
\begin{equation} \label{eq:k-infty}
K_S = H_M \nu_S \quad \text{ on } S \times \{\text{the complement of some compact subset of $M$}\}. 
\end{equation}
Similarly, let $J_S$ be a family of compatible almost complex structures on $M$ parametrized by $(s,t) \in S$, such that $J_{S,s,t} = J_{\lambda_0,t}$ for $s \ll 0$, $J_{S,s,t} = J_{\lambda_1,t}$ for $s \gg 0$, and 
\begin{equation} \label{eq:j-infty}
J_{S,s,t} = I_M \quad \text{ on } S \times \{\text{the complement of some compact subset of $M$}\}. 
\end{equation}
Assuming that the choices have been made generically, the chain map underlying \eqref{eq:continuation-map} is defined by counting solutions of the associated equation
\begin{equation} \label{eq:cont-equation}
\left\{
\begin{aligned}
& u: S \longrightarrow M, \\
& (du - Y_S)^{0,1} = 0, \\
& \textstyle\lim_{s \rightarrow -\infty} u(s,\cdot) = x_0, \\
& \textstyle\lim_{s \rightarrow +\infty} u(s,\cdot) = x_1, \\
\end{aligned}
\right.
\end{equation}
where $Y_S$ is the one-form on $S$ with values in Hamiltonian vector fields on $M$, derived from $K_S$, and the $(0,1)$-part is formed with respect to $J_S$. Outside a compact subset of the target space $M$, the Cauchy-Riemann equation in \eqref{eq:cont-equation} reduces to \eqref{eq:simple-dbar}. Moreover, all possible limits $x_0,x_1$ are contained in a compact subset, by the definition of $\scrP_M$. These two facts together with \eqref{eq:convexity} imply that all solutions $u$ are contained in a compact subset. 

The maps \eqref{eq:continuation-map} are independent of all choices, and well-behaved with respect to composition. This makes $\mathit{HF}^*(\lambda)$ into a directed system, and one can define
\begin{equation} \label{eq:infinite-slope}
\mathit{HF}^*(\infty) \stackrel{\mathrm{def}}{=} \underrightarrow{\lim}_\lambda \;
\mathit{HF}^*(\lambda).
\end{equation}

For the next step, take a partial compactification of the cylinder, $S = (\bR \times S^1) \cup \{s = +\infty\}$ (which is a Riemann surface isomorphic to the complex plane $\bC$). All previous conditions for $s \gg 0$ should now be replaced by ones asking that the relevant data extends smoothly over $s = +\infty$. Note that the number $\lambda = \lambda_0$ associated to the remaining end $s \ll 0$ must then necessarily satisfy $\lambda>0$ ($\lambda \geq 0$ because of $d\nu_S \leq 0$, and $\lambda \neq 0$ since we always ask that $\lambda \notin \scrP_M$). The analogue of \eqref{eq:cont-equation} yields a cochain in $\mathit{CF}^0(\lambda)$, whose cohomology class
\begin{equation} \label{eq:1-element}
1 \in \mathit{HF}^0(\lambda), \;\; \lambda>0,
\end{equation}
is independent of all choices, and preserved under continuation maps. One can further generalize this construction by using the evaluation map at the point $s = +\infty$. By asking that $u(+\infty)$ should go through the stable manifold of an exhausting Morse function on $M$, one can construct a chain map from the associated Morse complex to the Floer complex, hence a map
\begin{equation} \label{eq:morse-to-floer}
H^*(M;\bK) \longrightarrow \mathit{HF}^*(\lambda), \;\; \lambda > 0.
\end{equation}
This is again canonical and compatible with continuation maps. Moreover, the previously defined \eqref{eq:1-element} is simply the image of $1 \in H^0(M;\bK)$ under \eqref{eq:morse-to-floer}. Note that dually in terms of \eqref{eq:poincare-duality-hf}, we have maps into compactly supported cohomology
\begin{equation} \label{eq:dual-morse-to-floer}
\mathit{HF}^*(-\lambda) \longrightarrow H^*_{\mathit{cpt}}(M;\bK) \iso H_{2n-*}(M;\bK), \;\; \lambda > 0.
\end{equation}
The composition 
\begin{equation}
\mathit{HF}^*(-\lambda) \longrightarrow H^*_{\mathit{cpt}}(M;\bK) \longrightarrow H^*(M;\bK) \longrightarrow \mathit{HF}^*(\lambda)
\end{equation}
can again be identified with the relevant continuation map, by a gluing argument which (like the construction of the maps \eqref{eq:morse-to-floer}, \eqref{eq:dual-morse-to-floer} in itself) is modelled on those in \cite{piunikhin-salamon-schwarz94}.

\begin{example} \label{th:symplectic-cohomology}
In the situation from Example \ref{th:liouville}, the limit \eqref{eq:infinite-slope} is the symplectic cohomology $\mathit{SH}^*(M)$ in the sense of \cite{viterbo97a}, which is a symplectic invariant (see also \cite{cieliebak-floer-hofer95} for a closely related construction). In this situation, if $\mu$ is the length of the shortest periodic Reeb orbit, so that $(0,\mu) \cap \scrP_M = \emptyset$, one finds that \eqref{eq:morse-to-floer} is an isomorphism for $\lambda \in (0,\mu)$. On the other hand, passing to the limit yields a map
\begin{equation}
H^*(M;\bK) \longrightarrow \mathit{SH}^*(M),
\end{equation}
which played a crucial role in early applications to the Weinstein conjecture \cite{viterbo97a,viterbo97b}.
\end{example}

The next piece of structure we need is the BV (Batalin-Vilkovisky) operator
\begin{equation} \label{eq:bv-operator-1}
\Delta: \mathit{HF}^*(\lambda) \longrightarrow \mathit{HF}^{*-1}(\lambda)
\end{equation}
(see \cite[Section 8]{seidel07}, \cite[Section 3]{seidel-solomon10}, or \cite[Definition 2.11]{bourgeois-oancea12}). This squares to zero; commutes with continuation maps (in particular, induces an operation on \eqref{eq:infinite-slope}, for which we use the same notation); and vanishes on the image of \eqref{eq:morse-to-floer}. The definition uses a family of equations of type \eqref{eq:cont-equation} depending on an additional parameter $r \in S^1$. The Riemann surface $S_r = \bR \times S^1$ is the same for all $r$, but it comes with a family of one-forms $\nu_{S_r}$, inhomogeneous terms $K_{S_r}$, and almost complex structures $J_{S_r}$. The precise requirement is that
\begin{equation}
\left.
\begin{aligned}
& \nu_{S_r} = \lambda\, \mathit{dt} \\
& K_{S_r} = H_{\lambda,t} \mathit{dt} \\
& J_{S_r,s,t} = J_{\lambda,t}
\end{aligned}
\right\} \text{ for $s \ll 0$,}
\qquad
\left.
\begin{aligned}
& \nu_{S_r} = \lambda\, \mathit{dt} \\
& K_{S_r} = H_{\lambda,t-r} \mathit{dt} \\
& J_{S_r,s,t} = J_{\lambda,t-r}
\end{aligned}
\right\} \text{ for $s \gg 0$.}
\end{equation}
As one sees from this, it is possible (but not really necessary) to choose $\nu_{S_r}$ to be the same for all $r$. On the other hand, it is usually impossible to choose the same $K_{S_r}$ and $J_{S_r}$ for all $r$, because that would require $H_{\lambda}$ and $J_{\lambda}$ to be constant in $t$, which is incompatible with the transversality requirement for \eqref{eq:periodic-orbit}. Hence, the parametrized moduli space of pairs $(r,u)$, where $r \in S^1$ and $u: S \rightarrow M$ is a solution of the appropriate equation \eqref{eq:cont-equation}, can have a nontrivial zero-dimensional part, which one uses to construct the cochain level map underlying \eqref{eq:bv-operator-1}.

\subsection{Dilations}
We have now collected all the ingredients (Floer cohomology, continuation maps, the element \eqref{eq:1-element}, the BV operator) that enter into the following:

\begin{definition} \label{th:dilation}
A dilation is a class $B \in \mathit{HF}^1(\lambda)$, for some $\lambda > 0$, whose image $\tilde{B} \in \mathit{HF}^*(\tilde\lambda)$ under the continuation map, for some $\tilde\lambda \geq \lambda$, satisfies $\Delta \tilde{B} = 1$.
\end{definition}

We will also allow $\tilde{\lambda} = \infty$ or $\lambda = \infty$; in particular in the context of Example \ref{th:liouville}, where for $\lambda = \infty$ we would be talking about an element of $\mathit{SH}^1(M)$ (this is the formulation used in Theorem \ref{th:1}). Note that by definition \eqref{eq:infinite-slope}, any element of $\mathit{HF}^*(\infty)$ comes from $\mathit{HF}^*(\lambda)$ for some $\lambda>0$, and the same applies to relations between elements, such as the equation $\Delta B = 1$. Hence, saying that a solution of $\Delta B = 1$ exists in $\mathit{HF}^*(\infty)$ is equivalent to saying that Definition \ref{th:dilation} is satisfied for some finite values of $\lambda$, $\tilde{\lambda}$. However, for the purpose of the present paper the quantitative aspect, which means trying to get $\lambda$ (and, less importantly, $\tilde\lambda$) to be as small as possible, is also important.

\begin{example} \label{th:cotangent-sphere}
Take $M = T^*S^n$, with the standard forms $\omega_M = d\theta_M$, and a function $H_M$ such that $H_M(x) = \|x\|$ is the length (in the standard round metric) outside a compact subset. This means that, at infinity, $X_M$ is the normalized geodesic flow. The isomorphism \cite{viterbo97b,salamon-weber03,abbondandolo-schwarz06}
\begin{equation} \label{eq:string-topology}
\mathit{HF}^*(\infty) = \mathit{SH}^*(M) \iso H_{n-*}(\scrL S^n;\bK)
\end{equation}
with the homology of the free loop space, can be used to show the following \cite[Example 6.1 and Example 6.4]{seidel-solomon10}:
\begin{itemize}
\itemsep1em
\item $T^*S^1$ does not admit a dilation for any choice of coefficient field $\bK$;
\item $T^*S^2$ admits a dilation iff $\mathrm{char}(\bK) \neq 2$;
\item $T^*S^n$, $n>2$, admits a dilation for all choices of coefficient field $\bK$.
\end{itemize}

Let's focus on the situation where dilations exist. For the general reasons mentioned in Example \ref{th:symplectic-cohomology}, the map \eqref{eq:morse-to-floer} is an isomorphism if $0<\lambda<2\pi$. On the other hand, a Conley-Zehnder index computation shows that in nonnegative degrees, the maps
\begin{equation}
\mathit{HF}^*(\lambda) \longrightarrow \mathit{HF}^*(\infty), \;\; \ast \geq 0,
\end{equation}
are isomorphisms as soon as $\lambda>2\pi$. Hence, one can choose any $\tilde{\lambda} = \lambda > 2\pi$ in Definition \ref{th:dilation}, which means that the dilation appears at the earliest theoretically possible stage.
\end{example}

We refer to \cite[Example 6.4]{seidel-solomon10} as well as Example \ref{th:lens-space} below for further discussion of cotangent bundles. A very basic way to provide other examples is this:

\begin{example}
Take $M_0$ and $M_1$ be Liouville manifolds, with functions as in Example \ref{th:liouville}. If $M_0$ admits a dilation, then so does $M_0 \times M_1$. This follows from the K{\"u}nneth formula \cite{oancea04}.
\end{example}

On the other hand, a look at some classes of Liouville manifolds arising from algebraic geometry indicates that the existence of dilations is a very restrictive condition.

\begin{example} \label{th:hypersurface}
Take a smooth hypersurface $\bar{M} \subset \bC P^{n+1}$ of degree $d \geq 3$, and remove its intersection with a generic hyperplane. The result is an affine hypersurface $M \subset \bC^{n+1}$ which, when equipped with the restriction of the standard symplectic form, is unique (depends only on $d$) up to symplectic isomorphism. It is Liouville and comes with a trivialization of its anticanonical bundle, hence belongs to the class of manifolds from Example \ref{th:liouville} (in the case $d = 2$, which we have excluded, this would lead to the previously studied example $M \iso T^*S^n$). Take $\bK$ to be of characteristic $0$. It then turns out that $M$ never admits a dilation. The case $n = 1$ is of an elementary topological nature \cite[Example 6.1]{seidel-solomon10}, and we won't discuss it further, but we will give a sketch of the proof in the higher-dimensional situation.

Start by taking $d = 3$ and $n \geq 4$. The contact hypersurface $N$ describing the structure of $M$ at infinity is a circle bundle over the cubic hypersurface in $\bC P^n$, hence (by weak Lefschetz) satisfies
\begin{equation} \label{eq:weak-lefschetz}
H^q(N;\bK) = 0 \;\; \text{for } q \neq 0,n-1,n,2n-1.
\end{equation}
There is a Morse-Bott spectral sequence converging to $\mathit{SH}^*(M)$, with starting page (similar to \cite[Equation (1)]{seidel02}, which would be the analogue for $d = n+2$)
\begin{equation}
E_1^{pq} = \begin{cases} H^q(M;\bK) & p = 0, \\
H^{(5-2n)p+q}(N;\bK) & p<0, \\ 
%
%
0 & p>0.
\end{cases}
\end{equation}
In view of \eqref{eq:weak-lefschetz} and the dimension assumption, it follows that $\mathit{SH}^1(M) = 0$, hence $M$ cannot admit a dilation (strictly speaking, this part of the argument also requires knowing that the symplectic cohomology of $M$ is not identically zero, but that it easy to show; for instance, by exhibiting a Lagrangian sphere inside $M$). 

Now consider the affine cubic threefold ($d = 3$ and $n = 3$). This is clearly a hyperplane section of the affine cubic fourfold. Hence, there is a Lefschetz fibration which has the threefold as a fibre and the fourfold as total space. By \cite[Proposition 7.3]{seidel-solomon10} (see also Section \ref{subsec:lef} below for related results), if the fibre admits a dilation, so must the total space. We apply this argument in reverse to show that the affine cubic threefold does not admit a dilation.
One can iterate the same argument to reach the corresponding conclusion for the affine cubic surface.

Finally, a degeneration argument shows that the affine hypersurface of degree $d$ can be symplectically embedded into that of degree $d+1$. These embeddings are automatically exact if $n>1$. An application of Viterbo functoriality now shows that since cubic hypersurfaces do not admit dilations, neither do those of higher degree. 
\end{example}

\begin{example} \label{th:surface-milnor-fibre}
Take a affine algebraic surface $\{p(z_1,z_2,z_3) = 0\} \subset \bC^3$ with an isolated singularity at the origin (the dimensional restriction is crucial, compare Example \ref{th:stabilized-milnor-fibre} below). From that singularity, one obtains a symplectic four-manifold, its Milnor fibre, which (after attaching a semi-infinite cone to the boundary) belongs to the class of Liouville manifolds from Example \ref{th:liouville}. It turns out that if the singularity is not one of the simple ones (not of ADE type), its Milnor fibre does not admit a dilation (for any choice of coefficient field $\bK$). This can be proved by combining a construction of Lagrangian tori from \cite{keating14} with results from singularity theory, as follows.

\cite{keating14} first considers the simple-elliptic singularities of type $\tilde{E}_6$, $\tilde{E}_7$, $\tilde{E}_8$ ($P_8$, $X_9$ and $J_{10}$ in Arnol'd's notation \cite{arnold-gusein-zade-varchenko}), and shows that each of their Milnor fibres contains an exact Lagrangian torus. This implies the desired result, since the existence of a dilation rules out having closed exact Lagrangian submanifolds which are Eilenberg-MacLane spaces \cite[Corollary 6.3]{seidel-solomon10}. Any singularity which is not simple is adjacent to one of the three we have considered (see e.g.\ \cite[Proposition 10.1 and Table 3]{durfee79}). Adjacence comes with a symplectic embedding of Milnor fibres \cite[Lemma 9.9]{keating12}, and one then argues as in Example \ref{th:hypersurface}. The remaining case of simple (ADE type) singularities is open at present (except for $(A_1)$, where the Milnor fibre is $T^*S^2$).
\end{example}

\subsection{Lefschetz fibrations\label{subsec:lef}}
We want to discuss one more way to construct dilations. This largely follows \cite[Section 7]{seidel-solomon10}, but we pay a little more attention to the quantitative aspect, which means to the choice of Hamiltonian functions. 

Let $(F,\omega_F,\theta_F,H_F,I_F)$ be as in Setup \ref{th:setup}. Suppose that $F$ is the fibre of an exact symplectic Lefschetz fibration
\begin{equation} \label{eq:lefschetz-fibration}
\pi: M \longrightarrow \bC.
\end{equation}
The definition of such a fibration is as in \cite[Definition 7.1]{seidel-solomon10} (except that we are a little less restrictive concerning the kinds of fibres that are allowed; in \cite{seidel-solomon10} the fibres were required to be Liouville manifolds).
Take the given function $H_F$ on the fibre. Only its behaviour at infinity really matters, so we can assume without loss of generality that $H_F$ vanishes on a large compact subset. Since the Lefschetz fibration is trivial at infinity in fibrewise direction, there is then a preferred way to extend $H_F$ to a function on $M$. Define
\begin{equation} \label{eq:epsilon-function}
H_M = H_F + \epsilon H_\bC \in \smooth(M,\bR),
\end{equation}
where: $H_F$ stands for the previously mentioned extension; $H_\bC(z) = |z-b|^2/2$ is a function on the base, pulled back to $M$ (where $b \in \bC$ is some base point, assumed to be close to infinity, so that the fibration is locally trivial near $\pi^{-1}(b) \iso F$); and $\epsilon>0$ is a positive constant. Similarly, given the almost complex structure $I_F$ on the fibre, one can construct (in a non-unique way) a compatible almost complex structure $I_M$ on the total space, which makes $\pi$ pseudo-holomorphic. One can show that $M$, equipped with these data and with its given exact symplectic form $\omega_M = d\theta_M$, again satisfies the conditions from Setup \ref{th:setup}. To distinguish notationally between fibre and total space, we denote the respective Floer cohomology groups by $\mathit{HF}^*(F,\lambda)$ and $\mathit{HF}^*(M,\lambda)$, and similarly for the underlying chain complexes.

\begin{lemma} \label{th:lefschetz-1}
Suppose that $2n = \mathrm{dim}(F) \geq 4$. Fix some $\mu>0$. Then, provided that the constant $\epsilon$ is chosen sufficiently small, we have
\begin{equation} \label{eq:fibre-versus-total-space}
\mathit{HF}^*(M,\lambda) \iso \mathit{HF}^*(F,\lambda), \quad \ast \leq 1,
\end{equation}
for all $|\lambda| < \mu$. More precisely, if $\lambda$ is in that range and the right hand side of \eqref{eq:fibre-versus-total-space} is defined, then so is the left hand side, and the isomorphism holds.
\end{lemma}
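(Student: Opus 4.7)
My approach is to exploit the near-product structure visible for small $\epsilon$: the Floer complex of $(M,\lambda X_M)$ should split into fibre-type generators (indexed by orbits in $F_b\iso F$, matching $\mathit{CF}^*(F,\lambda)$) plus ``extra'' generators sitting at the Lefschetz critical points, which I want to push into Floer degree $\geq 3$, out of the range $*\leq 1$.

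Fix $\epsilon$ with $\epsilon\mu<2\pi$, and arrange Floer data so that $H_F$ is perturbed fibrewise for nondegeneracy, extended to $M$ so as to vanish in a neighbourhood of each Lefschetz critical point $p_i$, while the base term $\epsilon H_\bC$ stays time-independent, and $J$ is an $\omega_M$-compatible almost complex structure making $\pi$ pseudoholomorphic (so that the induced base complex structure is the standard $J_\bC=i$). Projecting a 1-periodic orbit of $\lambda X_M=\lambda X_F+\lambda\epsilon X_{H_\bC\circ\pi}$ by $\pi$ gives a 1-periodic orbit of $\lambda\epsilon X_{H_\bC}$ on $\bC$; but this vector field rotates about $b$ with unit angular frequency, and the non-resonance bound $|\lambda\epsilon|<2\pi$ forces the projection to be the constant orbit at $b$. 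So every orbit of $\lambda X_M$ either lies inside $F_b$ --- and since $X_{H_\bC\circ\pi}|_{F_b}=0$ (because $dH_\bC|_b=0$) it reduces to a 1-periodic orbit of $\lambda X_F$ in $F$ --- or is concentrated at some $p_i$. For a fibre orbit, the linearized flow splits as (fibre part) $\oplus$ (small rotation about the minimum $b$), and the standard grading convention makes the base contribution cancel out, so fibre generators sit in the same Floer degree in $M$ as in $F$. At $p_i$, choosing $b$ generic so that $c=b-v_i\neq 0$, a direct Hessian computation in standard Lefschetz coordinates $\pi=v_i+w_1^2+\cdots+w_{n+1}^2$ (with $n+1=\dim_\bC M$) shows $H_\bC\circ\pi$ has a nondegenerate critical point of Morse index $n+1$, so the associated Floer generator lies in degree $n+1\geq 3$ by the dimension hypothesis $\dim_\bR F\geq 4$. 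This also confirms nondegeneracy of all orbits, so $\lambda\notin\scrP_M$ whenever $\lambda\notin\scrP_F$.

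For the differential, pseudoholomorphicity of $\pi$ forces $\pi\circ u$ to satisfy the Floer equation on $\bC$ for $\lambda\epsilon H_\bC$; when both asymptotes of $u$ lie in $F_b$ this is a solution with constant asymptote $b$ at both ends, and a Fourier analysis of the resulting linear equation in $t\in S^1$ (the non-resonance bound $|\lambda\epsilon|<2\pi$ ruling out bounded solutions in any nonzero mode) forces $\pi\circ u\equiv b$. Hence $u$ factors through $F_b$ and coincides with a Floer trajectory in $F$ for $\lambda H_F$, so the Floer differentials on fibre generators in $M$ and in $F$ agree. Truncating to $*\leq 1$, the degree-$\geq 3$ extras play no role and the differential cannot emit into them either, so the truncated chain complexes coincide and passing to cohomology yields \eqref{eq:fibre-versus-total-space}.

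The main obstacle I foresee is technical rather than conceptual: reconciling the pseudoholomorphicity of $\pi$ (needed to run the projection argument) with the generic transversality of $J$ and with compactness of the Floer moduli spaces. I would handle this by imposing the fibered condition on $J$ only on the regions where the projection argument is actually invoked and perturbing $J$ freely elsewhere to achieve transversality, while the convexity property \eqref{eq:convexity} for $M$ is assembled out of the convexity of $H_F$ fibrewise together with that of $H_\bC$ on the base, preventing escape to infinity.
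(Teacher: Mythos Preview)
Your overall strategy matches the paper's: decompose $\mathit{CF}^*(M,\lambda)$ into fibre generators plus critical-point generators, show the latter sit in degree $n+1\geq 3$, and then argue that Floer trajectories between fibre generators stay in $\pi^{-1}(b)$. The generator analysis and the index computation at the $p_i$ are essentially correct.

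The gap is in your trajectory argument. You assert that $\pi\circ u$ satisfies the Floer equation on $\bC$ for $\lambda\epsilon H_\bC$, and then run a Fourier analysis on the resulting linear equation. But the projection does \emph{not} satisfy that equation. Because the symplectic form on $M$ is not a product near the Lefschetz critical points, one has $D\pi(X_{H_\bC\circ\pi}) = \psi\, X_{H_\bC}$ for a function $\psi\in C^\infty(M,\bR^{\geq 0})$ which vanishes at each $p_i$ and equals $1$ only away from them (and near $\pi^{-1}(b)$). So the projected equation for $v=\pi\circ u$ is
\[
\partial_s v + i\,\partial_t v + \lambda\epsilon\,\psi(u(s,t))\,(v-b) = 0,
\]
with a coefficient depending on $u$ itself (not on $v$). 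The Fourier modes no longer decouple, and there is no a priori reason $u$ avoids the region where $\psi\neq 1$; that is exactly what you are trying to prove. The same issue already appears in your orbit argument: the projection of a $1$-periodic orbit traverses a circle about $b$ with angular speed $\lambda\epsilon\psi(x(t))$, not $\lambda\epsilon$, so the bound $\epsilon\mu<2\pi$ should really involve $\sup\psi$.

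The paper closes this gap differently, by an energy computation (referenced as \cite[Equation (7.6)]{seidel-solomon10}) rather than a linear/Fourier argument: one shows directly that the horizontal energy of a trajectory with both asymptotes in $\pi^{-1}(b)$ vanishes, forcing $\pi\circ u\equiv b$. Once you replace your Fourier step by that energy estimate, the rest of your outline goes through and coincides with the paper's proof.
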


This is a simplified version of \cite[Lemma 7.2]{seidel-solomon10}. Briefly, the Hamiltonian vector field of \eqref{eq:epsilon-function} satisfies
\begin{equation}
D\pi(X_M) = \psi \epsilon X_{\bC}
\end{equation}
where: $X_{\bC} = i(z-b)\partial_z$; and $\psi \in \smooth(M,\bR^{\geq 0})$ is a function which vanishes precisely at the critical points of $\pi$, and equals $1$ outside a compact subset, as well as in a neighbourhood of $\pi^{-1}(b)$. By taking $\epsilon$ small, one ensures that all one-periodic orbits of $X_\lambda$ are either contained in $\pi^{-1}(b)$, or else constant orbits located at the critical points. After a suitable perturbation to achieve transversality, one finds that as graded vector spaces,
\begin{equation}
\mathit{CF}^*(M,\lambda) \iso \mathit{CF}^*(F,\lambda)\; \oplus\; \textstyle \bigoplus_{x \in \mathit{Crit}(\pi)} \bK[-1-n].
\end{equation}
The first summand comes from $1$-periodic orbits lying in $\pi^{-1}(b) \iso F$, and the second one from the critical points of $\pi$, whose Conley-Zehnder index is $n+1 \geq 3$. The main remaining point is an energy computation \cite[Equation (7.6)]{seidel-solomon10}, which shows that Floer trajectories with both limits lying in $\pi^{-1}(b)$ must entirely be contained in that fibre. A variation of the same argument shows:

\begin{lemma} \label{th:lefschetz-2}
In the situation of Lemma \ref{th:lefschetz-1}, the isomorphisms \eqref{eq:fibre-versus-total-space} are compatible with the BV operator, and with continuation maps. Moreover, for $0 < \lambda < \mu$ the following diagram commutes:
\begin{equation}
\begin{matrix}
\xymatrix{
\mathit{HF}^*(M,\lambda) \ar[r]^-{\iso} & \mathit{HF}^*(F,\lambda) \\
H^*(M;\bK) \ar[u] \ar[r]^-{\iso} & H^*(F;\bK) \ar[u]
} 
\end{matrix}
\qquad \ast \leq 1,
\end{equation}
where the $\uparrow$ maps are induced by \eqref{eq:morse-to-floer}, and the bottom $\rightarrow$ is the ordinary restriction map.
\end{lemma}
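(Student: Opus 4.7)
The plan is to extend the energy argument that proved Lemma \ref{th:lefschetz-1} to each of the three operations. In each case, the relevant moduli space is constructed from a family of Cauchy--Riemann equations on a Riemann surface, with perturbation data that differ from those of Lemma \ref{th:lefschetz-1} only in their dependence on the surface parameters. What I need to verify in each case is that, for solutions whose asymptotes lie in $\pi^{-1}(b) \iso F$, the same energy estimate as in \cite[Equation (7.6)]{seidel-solomon10} forces the solution to lie in $\pi^{-1}(b)$ and reduce to a solution of the corresponding equation on $F$, while solutions involving the constant orbits at $\mathit{Crit}(\pi)$ do not contribute in degrees $\ast \leq 1$ for purely index-theoretic reasons (their Conley--Zehnder index is $n+1 \geq 3$).

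For the continuation map between slopes $\lambda_0 \geq \lambda_1$ in $(-\mu,\mu)$, choose a one-form $\nu_S$ on $S = \bR \times S^1$ as in \eqref{eq:cont-equation}, together with perturbation data of the form $K_S = K_{S,F} + \epsilon H_{\bC}\,\nu_S$ and an almost complex structure $J_S$ that equals $I_M$ at infinity, makes $\pi$ pseudo-holomorphic outside a compact subset of $M$, and restricts on $\pi^{-1}(b)$ to a fibre datum $J_{S,F}$. The energy computation of Lemma \ref{th:lefschetz-1}, applied to $\pi \circ u$ for a solution $u$ of \eqref{eq:cont-equation}, shows that if both asymptotes lie in $\pi^{-1}(b)$ then $\pi \circ u \equiv b$, so that $u$ reduces to a fibre continuation trajectory. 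The resulting chain map respects the splitting used in the proof of Lemma \ref{th:lefschetz-1} and agrees with the fibre continuation map on the $F$-summand. The argument for the BV operator is identical: take $r$-parametrized data $(K_{S_r},J_{S_r})$ obtained from a fibrewise BV family by adding $\epsilon H_{\bC}\,\nu_{S_r}$ and extending $J_{S_r}$ as above.

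For the diagram, replace the exhausting Morse function on $M$ used in the construction of \eqref{eq:morse-to-floer} by a sum $f_M = f_F + \delta\, f_{\bC}$, where $f_F$ is an exhausting Morse function on $F$ extended to $M$ via the fibrewise trivialization at infinity, $f_{\bC}$ has a non-degenerate minimum at $b$, and $\delta > 0$ is small. All critical points of $f_M$ in degrees $\ast \leq 1$ then lie in $\pi^{-1}(b)$, and their stable manifolds intersect $\pi^{-1}(b)$ in the stable manifolds of the corresponding critical points of $f_F$. Running the PSS-style moduli problem on the thimble $(\bR \times S^1) \cup \{+\infty\}$ with perturbation data as in the previous paragraph, the energy argument again confines the solutions that contribute to $\pi^{-1}(b)$; the induced chain map on the fibre summand is then tautologically the fibre Morse-to-Floer map, and this establishes commutativity of the square (functoriality of the base Morse data under restriction $H^*(M;\bK) \to H^*(F;\bK)$ handles the left column).

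The main obstacle, as in Lemma \ref{th:lefschetz-1}, is to produce perturbation data that are simultaneously (a) of the additive form $K_S = K_{S,F} + \epsilon H_{\bC}\,\nu_S$ required for the fibrewise energy estimate, (b) generic enough to cut out the relevant moduli spaces transversely, and (c) compatible with the asymptotic conditions and with the Lefschetz structure at infinity. This is handled exactly as in the proof of Lemma \ref{th:lefschetz-1}, by keeping the perturbation of the fibrewise extension supported near $\pi^{-1}(b)$ and exploiting the freedom in choosing $K_{S,F}$ and $J_{S,F}$ within the fibre to achieve transversality there.
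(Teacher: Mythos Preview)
Your proposal is correct and matches the paper's approach exactly. The paper does not give a separate proof of this lemma at all: it simply prefaces the statement with the phrase ``A variation of the same argument shows,'' and you have spelled out precisely what that variation is---namely, running the same fibrewise energy estimate \cite[Equation~(7.6)]{seidel-solomon10} on the continuation, BV, and PSS moduli problems, with perturbation data of the additive form $K_S = K_{S,F} + \epsilon H_{\bC}\,\nu_S$, and invoking the Conley--Zehnder index $n+1\geq 3$ of the critical-point orbits to discard them in degrees $\ast \leq 1$.
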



For later use, it is convenient to formalize one of the applications of these ideas.

\begin{definition}
Let $M$ be a manifold with an exact symplectic structure $\omega_M = d\theta_M$ and a trivialization of $K_M^{-1}$. We say that $M$ has property (H) if there is a compatible almost complex structure $I_M$, a function $H_M$, and a $\mu>0$, such that the following holds:

(i) The conditions of Setup \ref{th:setup} are satisfied;

(ii) $\mu,2\mu \notin \scrP_M$;

(iii) the map $H^*(M;\bK) \rightarrow \mathit{HF}^*(\mu)$ is an isomorphism;

(iv) there is an element $B \in \mathit{HF}^1(2\mu)$ satisfying $\Delta B = 1 \in \mathit{HF}^0(2\mu)$.
\end{definition}

As a consequence of Lemmas \ref{th:lefschetz-1} and \ref{th:lefschetz-2}, together with a version of \cite[Lemma 7.2]{seidel-solomon10}, one then has:

\begin{corollary} \label{th:property-h}
Given an exact symplectic Lefschetz fibration, if the fibre has property (H), then so does the total space.
\end{corollary}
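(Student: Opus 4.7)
The plan is to construct the Floer-theoretic data on the total space $M$ from the given data on the fibre $F$, and then verify each clause of property (H) in turn. Concretely, I would set $\mu_M := \mu_F$, define $H_M$ on $M$ via \eqref{eq:epsilon-function} using the given $H_F$ together with a sufficiently small constant $\epsilon > 0$, and pick a compatible almost complex structure $I_M$ making $\pi$ pseudo-holomorphic and agreeing with $I_F$ on a neighbourhood of $\pi^{-1}(b)$.

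For (i), the paragraph following \eqref{eq:epsilon-function} already shows that such data fit into Setup \ref{th:setup}. For (ii), the key point (used inside the proof of Lemma \ref{th:lefschetz-1}) is that for $\epsilon$ small enough, every $1$-periodic orbit of $\lambda X_M$ with $|\lambda| \leq 2\mu_F$ is either contained in the reference fibre $\pi^{-1}(b) \cong F$ or is constant at a critical point of $\pi$. Since $\mu_F, 2\mu_F \notin \scrP_F$ by hypothesis, and the critical points form a compact set, no such orbit escapes to infinity, giving $\mu_M, 2\mu_M \notin \scrP_M$.

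For (iii), I would invoke Lemma \ref{th:lefschetz-1} with a cutoff $\mu > 2\mu_F$ to obtain isomorphisms $\mathit{HF}^*(M,\lambda) \cong \mathit{HF}^*(F,\lambda)$ in degrees $* \leq 1$ for $\lambda \in \{\mu_F, 2\mu_F\}$, and Lemma \ref{th:lefschetz-2} to intertwine these with the BV operator and with the Morse-Floer maps $H^*(-;\bK) \to \mathit{HF}^*(-,\lambda)$, the latter being compatible with the ordinary restriction $H^*(M;\bK) \to H^*(F;\bK)$. Because $M$ arises from the trivial fibration over a disk by attaching $(n+1)$-handles (one per critical value of $\pi$), and $n \geq 2$ by the dimension assumption built into Lemma \ref{th:lefschetz-1}, this restriction is an isomorphism in degrees $* \leq 1$; combined with condition (iii) for $F$, it yields the desired isomorphism in low degrees. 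Extending the isomorphism to all degrees is where the ``version of \cite[Lemma 7.2]{seidel-solomon10}'' referenced in the text must be applied, matching the extra critical-point contributions to $\mathit{CF}^*$ against the thimble classes in $H^*(M;\bK)$. For (iv), I would pick $B_F \in \mathit{HF}^1(F, 2\mu_F)$ with $\Delta B_F = 1$, let $B \in \mathit{HF}^1(M, 2\mu_M)$ be the class corresponding to $B_F$ under the isomorphism of Lemma \ref{th:lefschetz-1}, and then read off from Lemma \ref{th:lefschetz-2} that $\Delta B$ corresponds to $\Delta B_F = 1_F$; since ordinary restriction preserves the unit, the compatibility with Morse-Floer maps identifies this with $1 \in \mathit{HF}^0(M, 2\mu_M)$.

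The main obstacle is the last part of (iii), namely promoting the low-degree comparison of Lemma \ref{th:lefschetz-1} to an isomorphism in all degrees. This is not merely a matter of transferring information between $F$ and $M$: one has to balance the additional Floer generators at critical points (Conley-Zehnder index $n+1$) against the $(n+1)$-handle classes in $H^*(M;\bK)$, which requires the genuine refinement of \cite[Lemma 7.2]{seidel-solomon10} alluded to in the text. All other steps, including the verification of (i), (ii) and (iv), reduce to routine bookkeeping once the Lefschetz-fibration framework of Lemmas \ref{th:lefschetz-1} and \ref{th:lefschetz-2} is in place.
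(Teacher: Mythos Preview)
Your proposal is correct and follows essentially the same route as the paper, which itself offers no proof beyond the sentence ``As a consequence of Lemmas \ref{th:lefschetz-1} and \ref{th:lefschetz-2}, together with a version of \cite[Lemma 7.2]{seidel-solomon10}''. Your breakdown of clauses (i), (ii), (iv) is exactly what those two lemmas deliver, and you correctly isolate the one genuinely nontrivial point, namely that clause (iii) must hold in \emph{all} degrees (in the applications it is used in degree $n$ via duality), which is beyond the $\ast \leq 1$ range of Lemma \ref{th:lefschetz-1} and requires the unstated refinement matching the critical-point generators in degree $n+1$ against the thimble classes.
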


\begin{example} \label{th:stabilized-milnor-fibre}
Take a hypersurface 
\begin{equation} \label{eq:hypersurface}
\{p(z_1,\dots,z_{n+1}) = 0\} \subset \bC^{n+1}
\end{equation}
with an isolated singular point at the origin. Let $H = \{h_1z_1 + \cdots + h_{n+1}z_{n+1} = 0\} \subset \bC^{n+1}$ be a generic hyperplane through the origin. Then, one can make the Milnor fibre of the original singularity into the total space of an exact symplectic Lefschetz fibration, whose fibre is the Milnor fibre of 
\begin{equation} \label{eq:hyperplane-section}
\{p(z_1,\dots,z_{n+1}) = 0\} \cap H \subset H. 
\end{equation}
Very roughly speaking, the map which makes up the Lefschetz fibration is constructed from the linear function $z \mapsto h_1z_1 + \cdots h_{n+1}z_{n+1}$.

Because of the genericity assumption, if the rank of the Hessian $(D^2p)_{z=0}$ is less than $n+1$, then its restriction to $H$ will have the same rank. In particular, if $\mathrm{rank}((D^2p)_{z=0}) = n$, then \eqref{eq:hyperplane-section} has a nondegenerate singularity at the origin, so its Milnor fibre is $T^*S^{n-1}$. By iterating this idea and applying Corollary \ref{th:property-h} as well as Example \ref{th:cotangent-sphere}, one obtains the following conclusion: the Milnor fibre of \eqref{eq:hypersurface} has property (H) provided that
\begin{equation}
\mathrm{rank}((D^2p)_{z=0}) \geq \begin{cases} 3 & \mathrm{char}(\bK) \neq 2, \\
4 & \mathrm{char}(\bK) = 2.
\end{cases}
\end{equation}
\end{example}

\subsection{Lagrangian submanifolds}
We return to background material in Floer theory, still within the general framework of Setup \ref{th:setup}. Throughout, we will consider Lagrangian submanifolds of the following kind:

\begin{setup} \label{th:setup-lagrangian}
$L \subset M$ is assumed to be closed, connected, exact, and graded (which implies that it is oriented). If $\mathrm{char}(\bK) \neq 2$, we also assume that $L$ is {\em Spin}. 
\end{setup}

\begin{remark} \label{th:universal-coefficient-theorem}
If $H^1(L;\bK) = 0$ for some coefficient field $\bK$, then also $H^1(L;\bZ) = 0$, hence $H^1(L;\bR) = 0$. This is easiest to see in converse direction: if $H^1(L;\bR) = \mathit{Hom}(H_1(L;\bZ),\bR)$ is nonzero, $H_1(L;\bZ)$ has a nontrivial free summand, hence $H^1(L;\bK) = \mathit{Hom}(H_1(L;\bZ);\bK)$ is nonzero for any $\bK$. Hence, a Lagrangian submanifold with $H^1(L;\bK) = 0$ automatically satisfies the exactness condition from Setup \ref{th:setup-lagrangian}, and also admits a grading.
\end{remark}

Given a pair $(L_0,L_1)$ of such submanifolds, there is a well-defined Floer cohomology group $\mathit{HF}^*(L_0,L_1)$, which is a finite-dimensional graded $\bK$-vector space. To define it, one chooses some $\lambda_{L_0,L_1} \in \bR$. Take a time-dependent Hamiltonian $H_{L_0,L_1}$, where the time parameter is now $t \in [0,1]$, and such that $H_{L_0,L_1,t} = \lambda_{L_0,L_1} H_M$ outside a compact subset. One additionally requires that all chords
\begin{equation} \label{eq:periodic-orbit-2}
\left\{\begin{aligned} & x: [0,1] \longrightarrow M, \\ & x(0) \in L_0, \;\; x(1) \in L_1, \\ &
dx/dt = X_{L_0,L_1,t}(x) \end{aligned} \right.
\end{equation}
should be nondegenerate. Correspondingly, take a time-dependent almost complex structure $J_{L_0,L_1}$ so that $J_{L_0,L_1,t} = J_M$ outside a compact subset, and which makes all solutions of Floer's perturbed pseudo-holomorphic disc equation (with boundary values on $L_0$, $L_1$) nondegenerate. One then obtains a Floer complex $\mathit{CF}^*(L_0,L_1) = \mathit{CF}^*(H_{L_0,L_1},J_{L_0,L_1})$, whose differential we denote by $\mu^1_{L_0,L_1}$ following standard practice for Fukaya categories. Its cohomology $\mathit{HF}^*(L_0,L_1)$ is independent of all choices up to canonical isomorphism. Its Euler characteristic recovers the intersection number:
\begin{equation} \label{eq:hf-euler}
\chi(\mathit{HF}^*(L_0,L_1)) = (-1)^{n(n+1)/2} [L_0] \cdot [L_1].
\end{equation}

In the special case $L_0 = L_1 = L$, there is a canonical isomorphism
\begin{equation} \label{eq:hf-h}
\mathit{HF}^*(L,L) \iso H^*(L;\bK).
\end{equation}
There is also an analogue of \eqref{eq:poincare-duality-hf}, namely
\begin{equation} \label{eq:poincare-duality-lag}
\mathit{HF}^*(L_1,L_0) \iso \mathit{HF}^{n-*}(L_0,L_1)^\vee.
\end{equation}

\begin{remark} \label{th:fail}
If $L_0 \neq L_1$, one can coordinate choices to realize \eqref{eq:poincare-duality-lag} on the chain level, as in \eqref{eq:chain-duality}. However, that becomes much harder for $L_0 = L_1$ (in view of \eqref{eq:hf-h}, this relates to the issue of implementing a strict chain level Poincar{\'e} duality in ordinary topology, with the additional constraint of having to do so within Floer theory). This will cause a few complications later on (see Section \ref{sec:operations}).
\end{remark}

Any Lagrangian submanifold (within the class of Setup \ref{th:setup-lagrangian}, as always) gives rise to an element
\begin{equation} \label{eq:improved-class}
\lbr L \rbr \in \mathit{HF}^n(\lambda), \;\; \text{for any } \lambda \in \bR \setminus \scrP_M.
\end{equation}
To define this, one considers the Riemann surface with boundary $S = (-\infty,0] \times S^1$, with coordinates $(s,t)$. Equip $S$ with a one-form $\nu_S$ such that $d\nu_S \leq 0$ everywhere, and $\nu_S = \lambda \mathit{dt}$ on the region where $s \ll 0$. Choose an inhomogeneous term $K_S$ as before, with $K_{S,s,t} = H_{\lambda,t} \, \mathit{dt}$ for $s \ll 0$, and which satisfies the following boundary condition:
\begin{equation} \label{eq:boundary-condition-k}
\text{The $\mathit{dt}$ component of $K_{S,0,t}$ vanishes on $L$.} 
\end{equation}
Additionally, choose a family of almost complex structures $J_S$. One then considers an equation of type \eqref{eq:cont-equation} for maps $u: S \rightarrow M$, but with boundary values $u(\partial S) \subset L$. Counting isolated solutions of this equation yields the cochain representative for \eqref{eq:improved-class}. 

We want to describe some of its properties. First of all, for a fixed $L$ but varying $\lambda$, the classes \eqref{eq:improved-class} are mapped to each other by continuation maps. Moreover, if $\lambda>0$, then $\lbr L \rbr$ is simply the image of the ordinary homology class $[L]$ under the map $H^n_{\mathit{cpt}}(M;\bK) \rightarrow H^n(M;\bK) \rightarrow \mathit{HF}^n(\lambda)$. On the other hand, for $\lambda<0$, $\lbr L \rbr$ maps to $[L]$ under the map $\mathit{HF}^n(\lambda) \rightarrow H^n_{\mathit{cpt}}(M;\bK)$, which means that it is a refinement of the ordinary homology class, in general containing additional information. Finally, these classes are always annihilated by the BV operator:
\begin{equation} \label{eq:delta-kills-the-map}
\Delta \lbr L \rbr = 0.
\end{equation}
Roughly speaking, \eqref{eq:delta-kills-the-map} is proved as follows. By a gluing argument, $\Delta \lbr L \rbr$ can be defined directly using a family $(S_r)$, $r \in S^1$, of Riemann surfaces. These surfaces are all isomorphic to our previous $S = (-\infty,0] \times S^1$, and the parametrization of the auxiliary data must be such that $K_{S_r,s,t} = H_{\lambda,t+r}$, $J_{S_r,s,t} = J_{\lambda,t+r}$ for $s \ll 0$. Within this class, one can achieve that the isomorphism $S_0 \iso S_r$ given by rotation with angle $2\pi r$ is compatible with all the data. Then, the parametrized moduli space associated to the family $(S_r)$ decomposes as a product with a circle, hence has no isolated points, which implies the desired vanishing result.

In fact, one can generalize the construction of \eqref{eq:improved-class} to yield a pair of mutually dual (after switching the sign of $\lambda$) open-closed string maps
\begin{align} \label{eq:open-closed-string-map}
& \mathit{HF}^*(\lambda) \longrightarrow \mathit{HF}^*(L,L), \\
& \mathit{HF}^*(L,L) \longrightarrow \mathit{HF}^{*+n}(\lambda). \label{eq:dual-open-closed-string-map}
\end{align}
If one thinks of $H^*(L;\bK)$ instead of $\mathit{HF}^*(L,L)$, then the construction can be carried out in a way similar to \eqref{eq:morse-to-floer} and \eqref{eq:dual-morse-to-floer}, involving an auxiliary Morse function on $L$. The image of the unit element in $\mathit{HF}^0(L,L) \iso H^0(L;\bK)$ under \eqref{eq:dual-open-closed-string-map} recovers \eqref{eq:improved-class}.

\subsection{Product structures}
Finally, and without any geometric details, we want to review the product structures on Floer cohomology, starting with the Hamiltonian version (these go back to \cite{schwarz95,piunikhin-salamon-schwarz94} for closed symplectic manifolds, with details for Liouville manifolds given in \cite{ritter10}). First, one has the pair-of-pants product
\begin{equation} \label{eq:product}
\mathit{HF}^*(\lambda_2) \otimes \mathit{HF}^*(\lambda_1) \longrightarrow
\mathit{HF}^*(\lambda_1+\lambda_2).
\end{equation}
This is associative and graded commutative. It is also symmetric with respect to the pairing induced by \eqref{eq:chain-duality}, in the sense that
\begin{equation} \label{eq:cyclic-symmetry-1}
\begin{aligned}
& \langle x_3, x_2 x_1 \rangle = (-1)^{|x_3|} \langle x_2, x_1 x_3 \rangle = (-1)^{|x_1|} \langle x_1 x_3, x_2 \rangle \\
& \text{for } x_1 \in \mathit{HF}^*(\lambda_1), \; x_2 \in \mathit{HF}^*(\lambda_2), \;
x_3 \in \mathit{HF}^*(-\lambda_1-\lambda_2).
\end{aligned}
\end{equation}
Moreover, the product with the element \eqref{eq:1-element} yields a continuation map. Using that and associativity, one can show that there is an induced product on $\mathit{HF}^*(\infty)$, which makes that space into a graded commutative unital ring. For $\lambda_1,\lambda_2>0$, the following diagram involving \eqref{eq:morse-to-floer} commutes:
\begin{equation} \label{eq:ring-str-1}
\xymatrix{
H^*(M;\bK) \otimes H^*(M;\bK) \ar[rr] \ar[d]_-{\text{cup-product}} && 
\mathit{HF}^*(\lambda_2) \otimes \mathit{HF}^*(\lambda_1) \ar[d]^-{\text{product}} \\
H^*(M;\bK) \ar[rr] && \mathit{HF}^*(\lambda_1+\lambda_2).
}
\end{equation}
Similarly, if $\lambda_1,\lambda_2<0$ one has a commutative diagram
\begin{equation} \label{eq:ring-str-2}
\xymatrix{ 
\mathit{HF}^*(\lambda_2) \otimes \mathit{HF}^*(\lambda_1) \ar[rr] \ar[d]_-{\text{product}} &&
H^*_{\mathit{cpt}}(M;\bK) \otimes H^*_{\mathit{cpt}}(M;\bK) \ar[d]^-{\text{cup-product}}
\\
\mathit{HF}^*(\lambda_1+\lambda_2) \ar[rr] &&
H^*_{\mathit{cpt}}(M;\bK).
}
\end{equation}
(There is also a mixed-sign version, which we omit.)

Maybe more interestingly, Hamiltonian Floer cohomology also carries a Lie bracket 
\begin{equation} \label{eq:lie-bracket}
[\cdot,\cdot]: \mathit{HF}^*(\lambda_2) \otimes \mathit{HF}^*(\lambda_1) \longrightarrow \mathit{HF}^{*-1}(\lambda_1+ \lambda_2).
\end{equation}
This can actually be expressed in terms of the product and BV operator, as
\begin{equation} \label{eq:bv-relation-1}
[x_2,x_1] = \Delta(x_2x_1) - (\Delta x_2)x_1 - (-1)^{|x_2|} x_2 (\Delta x_1).
\end{equation}
The bracket with \eqref{eq:1-element} vanishes (because of \eqref{eq:bv-relation-1}, $\Delta 1 = 0$, and the fact that $\Delta$ commutes with continuation maps; there is also an alternative more direct proof). Finally, one can show, either from \eqref{eq:bv-relation-1} or directly from the definition, that the bracket commutes with continuation maps, hence induces a bracket on $\mathit{HF}^*(\infty)$. In fact, that space acquires the structure of a BV algebra \cite[Section 8]{seidel07}.

Returning to (maybe) more familiar territory, the Lagrangian Floer cohomology groups come with an associative product, the triangle product
\begin{equation} \label{eq:lag-product}
\mathit{HF}^*(L_1,L_2) \otimes \mathit{HF}^*(L_0,L_1) \longrightarrow \mathit{HF}^*(L_0,L_2).
\end{equation}
For $L_0 = L_1 = L_2 = L$, this is compatible with the isomorphism \eqref{eq:hf-h}. More generally, the class in $\mathit{HF}^0(L,L)$ corresponding to $1 \in H^0(L;\bK)$ under that isomorphism is a two-sided unit for \eqref{eq:lag-product}. On the other hand, the product combined with integration over $L$ yields a nondegenerate (and graded symmetric) pairing
\begin{equation} \label{eq:poincare-pairing}
\mathit{HF}^*(L_1,L_0) \otimes \mathit{HF}^{n-*}(L_0,L_1)
\xrightarrow{\text{product}} \mathit{HF}^n(L_0,L_0) \iso H^n(L_0;\bK)
\xrightarrow{\int_{L_0}} \bK,
\end{equation}
which reproduces \eqref{eq:poincare-duality-lag}.

\section{The main argument\label{sec:strategy}}

\subsection{Degree one classes and twisting}
We will now explain how to prove our main results, Theorems \ref{th:1}--\ref{th:5}, using certain additional structures on Floer cohomology groups. Those structures build on the ones described in Section \ref{sec:dilations}, but go a little further. For the moment, we will state their existence and properties without proofs, and instead concentrate on what those properties imply. Some of the necessary details were carried out in \cite{seidel-solomon10}, and the rest will be provided in Sections \ref{sec:operations}--\ref{sec:technical} below. Throughout, we work in the context from Setup \ref{th:setup}. We fix some $\mu>0$ such that $\mu \notin \scrP_M$, $2\mu \notin \scrP_M$, and a class $B \in \mathit{HF}^1(2\mu)$, which can be arbitrary for now. 

Associated to $B$ is a twisted version of Floer cohomology, which we denote by $\tilde{H}^*$. It fits into a long exact sequence
\begin{equation} \label{eq:cone-les}
\cdots \rightarrow \mathit{HF}^{*-1}(-\mu) \stackrel{-B}{\longrightarrow} \mathit{HF}^*(\mu) \longrightarrow \tilde{H}^* \longrightarrow \mathit{HF}^*(-\mu) \stackrel{-B}{\longrightarrow} \mathit{HF}^{*+1}(\mu) \rightarrow \cdots
\end{equation}
Here, by $-B$ we mean the multiplication map $x \mapsto -Bx$ in the sense of \eqref{eq:product}. The existence of $\tilde{H}^*$ should not come as a surprise: on the chain complex level, one takes a chain map representing the multiplication map, and then $\tilde{H}^*$ is the cohomology of its mapping cone. More importantly, there is a pairing
\begin{equation} \label{eq:i-pairing}
I: \tilde{H}^* \otimes \tilde{H}^{2n-*} \longrightarrow \bK.
\end{equation}
This can be partly understood in terms of the natural duality \eqref{eq:chain-duality} between $\mathit{HF}^*(\pm \mu)$. More precisely, the statement is the following:

\begin{lemma} \label{th:pairing-1}
Take $\tilde{x}_0,\tilde{x}_1 \in \tilde{H}^*$. Suppose that in the long exact sequence \eqref{eq:cone-les}, $\tilde{x}_1$ is the image of some $x_1 \in \mathit{HF}^*(\mu)$, and that $\tilde{x}_0$ maps to $\xi_0 \in \mathit{HF}^*(-\mu)$. Then \parskip0em
\begin{equation}
I(\tilde{x}_0,\tilde{x}_1) = -\langle \xi_0,x_1 \rangle.
\end{equation}
(A parallel statement holds with the roles of $\tilde{x}_0$ and $\tilde{x}_1$ reversed.)
\end{lemma}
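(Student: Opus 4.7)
My plan is to pass to the cochain level, where $\tilde{H}^*$ arises as the cohomology of a mapping cone $\mathrm{Cone}(\beta)$ with $\beta\colon \mathit{CF}^{*-1}(-\mu) \to \mathit{CF}^*(\mu)$ a cochain-level realization of multiplication by $-B$ via \eqref{eq:product}. The cone has $k$-th piece $\mathit{CF}^k(\mu) \oplus \mathit{CF}^{k-1}(-\mu)$ with differential $(x,\xi) \mapsto (dx+\beta(\xi),-d\xi)$, and the two maps in \eqref{eq:cone-les} are the inclusion $x\mapsto(x,0)$ and the projection $(y,\xi)\mapsto \xi$. Using the hypotheses I take cocycle representatives $\tilde x_1 = [(x_1,0)]$ with $dx_1=0$, and $\tilde x_0 = [(y_0,\xi_0)]$ with $d\xi_0=0$ and a primitive $y_0 \in \mathit{CF}^{2n-*-1}(\mu)$ satisfying $dy_0 = -\beta(\xi_0)$.

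Next I would use the cochain-level realization of $I$ to be established in the next sections. The key property I expect is that its ``cross''-type components, which pair the $\mathit{CF}^*(-\mu)$ summand of one cone factor with the $\mathit{CF}^*(\mu)$ summand of the other, are given, up to a sign, by the duality pairing $\langle\cdot,\cdot\rangle$ from \eqref{eq:chain-duality}; any ``same-type'' components must be chosen so that the total expression is compatible with the cone differential, the essential obstruction being absorbed by the cyclic symmetry \eqref{eq:cyclic-symmetry-1} of multiplication by $B$. Evaluating this chain-level $I$ on the representatives above, the same-type contributions and one of the two cross terms are forced to vanish (because the second component of $(x_1,0)$ is zero), so only the $\langle\xi_0,x_1\rangle$ cross term survives. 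The overall sign is pinned down by the minus in $-d\xi$ in the cone differential together with the sign conventions in \eqref{eq:chain-duality}, yielding $I(\tilde x_0, \tilde x_1) = -\langle \xi_0, x_1\rangle$ as claimed.

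The main hurdle is thus to verify that the intrinsic $I$ constructed geometrically in later sections does reduce, on cochain level, to this natural cone-theoretic expression, and in particular with the correct sign. This amounts to a moduli-space identification via a standard degeneration/gluing argument, matching the Riemann surfaces that define $I$ with those defining the multiplication by $B$ and the duality pairing. Granted that identification, the lemma is the formal manipulation sketched above. The parenthetical statement, with the roles of $\tilde x_0$ and $\tilde x_1$ swapped, follows by applying the same computation with the ordering of the arguments exchanged, or equivalently by invoking the (graded) symmetry of $I$.
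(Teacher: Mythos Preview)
Your approach is correct and essentially the same as the paper's: one defines $\tilde{C}^*$ as a cone, writes down an explicit chain-level pairing $\iota$, and evaluates it on the cocycle $(0,x_1)$ representing $\tilde{x}_1$, whereupon only the cross term $-(-1)^{|\xi_0|}\langle x_1,\xi_0\rangle = -\langle \xi_0,x_1\rangle$ survives. The only clarification is that there is no ``main hurdle'': the paper \emph{defines} $I$ as the cohomology pairing induced by the explicit formula $\iota((\xi_0,x_0),(\xi_1,x_1)) = \langle x_0,\xi_1\rangle - (-1)^{|\xi_0|}\langle x_1,\xi_0\rangle + \langle \beta,\, \xi_0 \ast \xi_1\rangle$ (with $\ast$ the secondary product witnessing homotopy-commutativity of $\smile$), so no separate geometric-to-algebraic identification is required.
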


\begin{corollary}
$I$ is nondegenerate.
\end{corollary}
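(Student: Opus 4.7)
The plan is to exploit the long exact sequence \eqref{eq:cone-les} to present $\tilde{H}^*$ as an extension, and show that with respect to this filtration the pairing $I$ has the structure of a hyperbolic form built from the duality \eqref{eq:chain-duality}, which is already nondegenerate. Let me write $\iota: \mathit{HF}^*(\mu) \to \tilde H^*$ and $\pi: \tilde H^* \to \mathit{HF}^*(-\mu)$ for the maps in \eqref{eq:cone-les}, so that $\ker\pi = \mathrm{im}\,\iota = \mathit{HF}^*(\mu)/\mathrm{im}(-B)$ and $\mathrm{im}\,\pi = \ker(-B : \mathit{HF}^*(-\mu) \to \mathit{HF}^{*+1}(\mu))$.

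Suppose $\tilde x_0 \in \tilde H^*$ satisfies $I(\tilde x_0, \tilde x_1) = 0$ for every $\tilde x_1 \in \tilde H^{2n-*}$. First I would test against elements of the form $\tilde x_1 = \iota(x_1)$, $x_1 \in \mathit{HF}^{2n-*}(\mu)$; for these Lemma \ref{th:pairing-1} gives $I(\tilde x_0, \iota(x_1)) = -\langle \pi(\tilde x_0), x_1\rangle$. Since \eqref{eq:chain-duality} is nondegenerate on cohomology, this forces $\pi(\tilde x_0) = 0$, hence $\tilde x_0 = \iota(x_0)$ for some $x_0 \in \mathit{HF}^*(\mu)$.

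Next I would run the symmetric version of Lemma \ref{th:pairing-1}: for any $\tilde x_1$ with $\pi(\tilde x_1) = \xi_1 \in \ker(-B) \subset \mathit{HF}^{2n-*}(-\mu)$, one has $I(\iota(x_0), \tilde x_1) = \pm \langle x_0, \xi_1\rangle$. By exactness of \eqref{eq:cone-les} every $\xi_1 \in \ker(-B)$ arises as some $\pi(\tilde x_1)$, so the vanishing of $I(\tilde x_0, \cdot)$ translates into $x_0$ being orthogonal, under $\langle \cdot, \cdot\rangle$, to the entire kernel of multiplication by $B$ from $\mathit{HF}^{2n-*}(-\mu)$ to $\mathit{HF}^{2n-*+1}(\mu)$. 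The cyclic symmetry relation \eqref{eq:cyclic-symmetry-1} shows that multiplication by $B$ is self-adjoint up to sign with respect to the Poincar{\'e} duality pairings; therefore the annihilator of $\ker(-B)$ is exactly $\mathrm{im}(-B)$, and we conclude $x_0 \in \mathrm{im}(-B)$, so $\tilde x_0 = \iota(x_0) = 0$. A parallel argument (starting by testing with $\tilde x_1 = \iota(x_1)$ on the other side, then pairing via $\pi$) handles right-nondegeneracy, yielding the claim.

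The step I expect to require the most care is the adjointness of multiplication by $B$: one has to chase the signs and degree shifts through \eqref{eq:cyclic-symmetry-1} combined with graded commutativity of \eqref{eq:product} to confirm that $\langle (-B)x, y\rangle = \pm\langle x, (-B)y\rangle$ with the signs consistent between the two uses. Everything else is straightforward diagram chasing in the long exact sequence \eqref{eq:cone-les}, using only that $\langle \cdot,\cdot\rangle$ of \eqref{eq:chain-duality} is already known to be nondegenerate.
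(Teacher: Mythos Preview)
Your proposal is correct and follows essentially the same route as the paper: first use Lemma \ref{th:pairing-1} to force $\pi(\tilde x_0)=0$ so that $\tilde x_0 = \iota(x_0)$, then use the parallel version of that lemma to see $x_0$ is orthogonal to $\ker(-B)$, and finally invoke the (up-to-sign) adjointness of the two multiplication-by-$B$ maps under \eqref{eq:chain-duality}, coming from commutativity and cyclic symmetry \eqref{eq:cyclic-symmetry-1}, to conclude $x_0 \in \mathrm{im}(-B)$ and hence $\tilde x_0 = 0$. Your only addition is the explicit mention of right-nondegeneracy, which is harmless (and in fact automatic here since the Floer groups involved are finite-dimensional).
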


\begin{proof}
Suppose that $\tilde{x}_0$ lies in the nullspace, so $I(\tilde{x}_0,\tilde{x}_1) = 0$ for all $\tilde{x}_1$. By Lemma \ref{th:pairing-1}, the image of $\tilde{x}_0$ in $\mathit{HF}^*(-\mu)$ must vanish, which means that $\tilde{x}_0$ itself comes from a class $x_0 \in \mathit{HF}^*(\mu)$. But then, the corresponding formula with the two entries reversed shows that $\langle \xi,x_0 \rangle = 0$ for all $\xi \in \mathit{HF}^*(-\mu)$ which satisfy $B\xi = 0$. Because of the commutativity of the product and its cyclic symmetry \eqref{eq:cyclic-symmetry-1}, the multiplication maps
\begin{equation} \label{eq:b-duality}
\begin{aligned}
& B: \mathit{HF}^k(-\mu) \longrightarrow \mathit{HF}^{k+1}(\mu), \\
& B: \mathit{HF}^{2n-k-1}(-\mu) \longrightarrow \mathit{HF}^{2n-k}(\mu)
\end{aligned}
\end{equation}
are dual (up to signs) with respect to \eqref{eq:poincare-duality-hf}. It follows that $x_0$ itself lies in the image of multiplication by $B$, hence $\tilde{x}_0$ vanishes.
\end{proof}

Lemma \ref{th:pairing-1} does not describe $I$ fully, and indeed there is an extra ingredient that enters into its definition, which is related to the failure of \eqref{eq:product} to be commutative on the chain level. One can see an indirect sign of that extra ingredient in the following symmetry formula:

\begin{lemma} \label{th:pairing-2}
Take classes $\tilde{x}_0,\tilde{x}_1 \in \tilde{H}^*$, and denote their images in $\mathit{HF}^*(-\mu)$ by $\xi_0,\xi_1$. Then
\begin{equation} \label{eq:i-symmetry}
I(\tilde{x}_0,\tilde{x}_1) + (-1)^{|\tilde{x}_0|} I(\tilde{x}_1,\tilde{x}_0) =
\langle B, [\xi_0,\xi_1] \rangle.
\end{equation}
\end{lemma}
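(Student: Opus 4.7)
My plan is to work entirely on the cochain level with an explicit mapping-cone model for $\tilde{H}^*$. Take $\tilde{C}^k = \mathit{CF}^k(\mu) \oplus \mathit{CF}^{k-1}(-\mu)$ with differential $\tilde{d}(a,b) = (da \mp m_\beta(b),\, -db)$, where $\beta \in \mathit{CF}^1(2\mu)$ is a cocycle representing $B$ and $m_\beta: \mathit{CF}^*(-\mu) \to \mathit{CF}^{*+1}(\mu)$ is a chain map implementing multiplication by $\beta$. Represent each $\tilde{x}_j$ by a cocycle $(a_j, b_j)$; then $b_j$ is a cocycle representative of $\xi_j$, and $a_j$ is a bounding cochain for $m_\beta(b_j)$, which is null-cohomologous precisely because $B\xi_j = 0$ by exactness of \eqref{eq:cone-les}.

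Next I would pin down the cochain-level formula for $I$. Lemma \ref{th:pairing-1} forces this formula to contain the term $-\langle b_0, a_1\rangle$, and its analogue with reversed inputs forces a companion term $\pm\langle b_1, a_0\rangle$ (sign dictated by \eqref{eq:chain-duality}). These two pieces alone do not define a chain map on $\tilde{C}^*$: applying $\tilde{d}$, using $da_j = \pm m_\beta(b_j)$, and invoking the cyclic symmetry \eqref{eq:cyclic-symmetry-1} to move $\beta$ off, the cross-contributions collapse to a residue of the shape $\langle \beta,\, b_0 b_1 \mp b_1 b_0\rangle$, which is nonzero at the chain level because the pair-of-pants product is only graded commutative up to homotopy. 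Hence the full cochain formula for $I$ must include a quantum correction $\langle \beta,\, \eta(b_0, b_1)\rangle$ built from a chain homotopy $\eta$ witnessing graded commutativity of the product. This is exactly the ``extra ingredient'' referred to in the text.

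To prove \eqref{eq:i-symmetry}, I would then compute $I(\tilde{x}_0,\tilde{x}_1) + (-1)^{|\tilde{x}_0|} I(\tilde{x}_1,\tilde{x}_0)$ directly on cochains. The four $\langle b, a\rangle$-type contributions pair up via the graded symmetry of \eqref{eq:chain-duality} and cancel in pairs, leaving a residue of the form $\langle \beta,\, b_0 b_1 - (-1)^{|\xi_0||\xi_1|} b_1 b_0\rangle$ together with the antisymmetrised correction $\langle \beta,\, \eta(b_0, b_1) + (-1)^{|\tilde{x}_0|} \eta(b_1, b_0)\rangle$. By the defining relation of $\eta$, the sum of these two expressions represents a specific cocycle in $\mathit{CF}^{2n-1}(-2\mu)$; using the BV identity \eqref{eq:bv-relation-1} and the fact (from \eqref{eq:delta-kills-the-map} and the LES) that the image of $\tilde{x}_j$ in $\mathit{HF}^*(-\mu)$ is $B$-torsion, one identifies its cohomology class with $[\xi_0, \xi_1]$. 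Pairing with $\beta$ then yields the right-hand side $\langle B, [\xi_0,\xi_1]\rangle$.

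The hard part will be verifying that the homotopy $\eta$ entering the definition of $I$ is precisely the chain-level operation whose antisymmetrisation produces the bracket cocycle, rather than some other choice that would only differ by a boundary but alter the formula after pairing with $\beta$. The cleanest route is geometric: both $I$ and its swap $I(\cdot_1,\cdot_0)$ are built from solution counts on a common Riemann surface $S$ (a cylinder with two cylindrical ends carrying $\tilde{x}_0, \tilde{x}_1$ plus an auxiliary marked point implementing the $\beta$-insertion that encodes the cone), and \eqref{eq:i-symmetry} should emerge from analysing a one-parameter family of such $S$ in which the two input ends are rotated past each other. The two boundary strata of the parameter space give $I(\tilde{x}_0,\tilde{x}_1)$ and $\pm I(\tilde{x}_1,\tilde{x}_0)$, while the interior codimension-one degenerations, in which the $\beta$-insertion collides with a pair-of-pants region, reproduce the bracket through exactly the surface geometry that underlies \eqref{eq:bv-relation-1}. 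Running this argument and keeping track of signs is the delicate but standard part of the proof.
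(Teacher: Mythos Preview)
Your architecture matches the paper's: mapping-cone model for $\tilde C^*$, a cochain pairing with two $\langle x,\xi\rangle$-type terms plus a correction $\langle \beta,\eta(\cdot,\cdot)\rangle$ built from the commutativity homotopy for $\smile$. That homotopy is exactly the operation $\ast$ of \eqref{eq:star-product}, and the paper's pairing is $\iota((\xi_0,x_0),(\xi_1,x_1)) = \langle x_0,\xi_1\rangle - (-1)^{|\xi_0|}\langle x_1,\xi_0\rangle + \langle \beta,\xi_0\ast\xi_1\rangle$. From there the proof is one line: in the symmetrised sum the four $\langle x,\xi\rangle$ terms cancel \emph{exactly} (there is no residue of the form $\langle\beta, b_0 b_1 \mp b_1 b_0\rangle$; that part of your computation is wrong), leaving only $\langle\beta,\xi_0\ast\xi_1 + (-1)^{|\xi_0|}\xi_1\ast\xi_0\rangle$.

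The step you call the ``hard part'' is in fact trivial, because the bracket is \emph{defined} as the antisymmetrisation of $\ast$: equation \eqref{eq:lie} reads $[x_2,x_1] = x_2\ast x_1 + (-1)^{|x_1||x_2|} x_1\ast x_2$, and for $|\xi_0|+|\xi_1|=2n$ the sign $(-1)^{|\xi_0||\xi_1|}$ equals $(-1)^{|\xi_0|}$. So no BV identity, no $\Delta$-torsion argument, and no one-parameter family of Riemann surfaces is needed here; the geometric picture you sketch is the content of \eqref{eq:bv-relation}, which is a separate and deeper statement not required for this lemma.
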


Looking slightly ahead to the case of dilations, this implies the following:

\begin{corollary}
In the situation of Lemma \ref{th:pairing-2}, suppose that $\Delta B = 1 \in \mathit{HF}^0(2\mu)$, and that $\Delta \xi_0 = 0$, $\Delta \xi_1 = 0$. Let $\lambda_0,\lambda_1 \in H_*(M;\bK)$ be the images of $\xi_0,\xi_1$ under \eqref{eq:dual-morse-to-floer}. Then
\begin{equation} \label{eq:i-symmetry-3}
I(\tilde{x}_0,\tilde{x}_1) + (-1)^{|\tilde{x}_0|} I(\tilde{x}_1,\tilde{x}_0) = \lambda_0 \cdot \lambda_1,
\end{equation}
where the right hand side is the standard intersection pairing.
\end{corollary}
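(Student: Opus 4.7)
The plan is to start from Lemma~\ref{th:pairing-2}, which already reduces the task to showing $\langle B, [\xi_0, \xi_1] \rangle = \lambda_0 \cdot \lambda_1$. Using the BV--bracket identity \eqref{eq:bv-relation-1} together with the vanishing hypothesis $\Delta \xi_0 = \Delta \xi_1 = 0$, the last two terms drop out and the bracket collapses to a pure BV expression,
\[
[\xi_0, \xi_1] \;=\; \Delta(\xi_0 \xi_1).
\]
So everything comes down to identifying $\langle B, \Delta(\xi_0 \xi_1) \rangle$ with the topological intersection $\lambda_0 \cdot \lambda_1$.

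The next move is to shift $\Delta$ across the duality pairing \eqref{eq:chain-duality}. The BV operator is graded self-adjoint (up to a sign determined by $|B|=1$) with respect to that pairing. This is a chain-level assertion one proves by exploiting the symmetry under $(s,t) \mapsto (-s,-t)$ of the family of cylinders defining $\Delta$, together with the compatibility \eqref{eq:swap-sign} between the Floer data for slopes $\lambda$ and $-\lambda$; it is the natural companion of the identity \eqref{eq:delta-kills-the-map}. Granted self-adjointness and using $\Delta B = 1$,
\[
\langle B, \Delta(\xi_0 \xi_1) \rangle \;=\; \pm \langle \Delta B, \xi_0 \xi_1 \rangle \;=\; \pm \langle 1, \xi_0 \xi_1 \rangle.
\]

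To finish, I interpret $\langle 1, \xi_0 \xi_1 \rangle$ topologically. By the commutative square \eqref{eq:ring-str-2}, the image of $\xi_0 \xi_1 \in \mathit{HF}^*(-2\mu)$ under \eqref{eq:dual-morse-to-floer} is the compactly supported cup product of the images of $\xi_0$ and $\xi_1$; under Poincar\'e duality $H^*_{\mathit{cpt}}(M;\bK) \iso H_{2n-*}(M;\bK)$, this matches the intersection class of $\lambda_0$ and $\lambda_1$. On the other hand, $1 \in \mathit{HF}^0(2\mu)$ is the Floer image of $1 \in H^0(M;\bK)$ under \eqref{eq:morse-to-floer}, and when restricted to the images of \eqref{eq:morse-to-floer} and \eqref{eq:dual-morse-to-floer} the pairing \eqref{eq:chain-duality} reduces to the standard topological pairing $H^0(M;\bK) \otimes H^{2n}_{\mathit{cpt}}(M;\bK) \to \bK$ given by integration over $M$. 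Hence
\[
\langle 1, \xi_0 \xi_1 \rangle \;=\; \int_M \lambda_0 \cup \lambda_1 \;=\; \lambda_0 \cdot \lambda_1,
\]
which is what was to be proved.

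The main obstacle is sign bookkeeping: I need the signs produced by self-adjointness of $\Delta$, by graded commutativity of the product, and by the chain-level pairing to combine to exactly $+1$ in \eqref{eq:i-symmetry-3}. A secondary technical point is pinning down the precise form of the self-adjointness of $\Delta$, which is geometrically natural but requires a direct comparison of moduli spaces rather than a purely formal manipulation of the identities already collected in Section~\ref{sec:dilations}.
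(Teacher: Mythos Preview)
Your proposal is correct and follows essentially the same route as the paper: start from Lemma~\ref{th:pairing-2}, use the BV identity \eqref{eq:bv-relation-1} together with $\Delta\xi_0=\Delta\xi_1=0$ to reduce to $\langle B,\Delta(\xi_0\xi_1)\rangle$, move $\Delta$ across the pairing to get $\langle 1,\xi_0\xi_1\rangle$, and then invoke \eqref{eq:ring-str-2} to identify this with the intersection number. The paper glosses over exactly the two points you flag---the sign in the self-adjointness of $\Delta$ and the precise compatibility of the Floer pairing with integration---so your caveats are well placed rather than signs of a gap.
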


\begin{proof}
In view of \eqref{eq:bv-relation}, one can rewrite \eqref{eq:i-symmetry} as
\begin{equation} \label{eq:i-symmetry-2}
I(\tilde{x}_0,\tilde{x}_1) + (-1)^{|\tilde{x}_0|} I(\tilde{x}_1,\tilde{x}_0) =
\langle B, \Delta(\xi_0\xi_1) \rangle - \langle B, (\Delta \xi_0)\xi_1 + (-1)^{|\xi_0|} \xi_0 (\Delta \xi_1) \rangle.
\end{equation}
Under our assumptions, the last two terms on the right hand side vanish. Because of the general compatibility of the BV operator with the pairing $\langle \cdot, \cdot \rangle$, the remaining term equals $\langle \Delta B, \xi_0\xi_1 \rangle = \langle 1, \xi_0\xi_1 \rangle$, which is also the integral of the image of $\xi_0\xi_1$ in $H^*_{\mathit{cpt}}(M;\bK)$. Using \eqref{eq:ring-str-2}, one now converts this into a standard intersection number.
\end{proof}

\subsection{Improved intersection numbers}
We will now summarize some of the material from \cite{seidel-solomon10}, and then simplify it further for our purpose. Under the open-closed string map \eqref{eq:open-closed-string-map}, the given $B \in \mathit{HF}^1(2\mu)$ gives rise to a class in $\mathit{HF}^1(L,L) \iso H^1(L;\bK)$. Suppose that this class vanishes. One can then equip $L$ with some additional structure, which makes it into a {\em $B$-equivariant Lagrangian submanifold}, denoted by $\tilde{L}$. To make the definition precise, one needs to choose a cochain representing $B$, and then consider its image under the chain level realization of the open-closed string map; the additional structure is a cochain bounding that image. The possible choices of $B$-equivariant structures on $L$ form an affine space over $\mathit{HF}^0(L,L) \iso H^0(L;\bK) = \bK$ (for more details, see \cite[Section 4]{seidel-solomon10} and Section \ref{sec:operations} below).

The $B$-equivariance property allows us to introduce a secondary open-closed string operation, which takes on the form of an endomorphism 
\begin{equation} \label{eq:tilde-phi}
\Phi_{\tilde{L}_0,\tilde{L}_1}: \mathit{HF}^*(L_0,L_1) \longrightarrow \mathit{HF}^*(L_0,L_1)
\end{equation}
for any pair $(\tilde{L}_0,\tilde{L}_1)$ of $B$-equivariant Lagrangian submanifolds. These endomorphisms were defined in \cite[Equation (4.4)]{seidel-solomon10}, where their basic properties were established. In particular \cite[Equation (4.9)]{seidel-solomon10}:

\begin{lemma} \label{th:derivation}
The maps \eqref{eq:tilde-phi} are derivations with respect to the product \eqref{eq:lag-product}.
\end{lemma}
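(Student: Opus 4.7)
The plan is to prove the derivation property by realizing $\Phi_{\tilde{L}_0,\tilde{L}_1}$ on the cochain level and then analyzing the codimension-one boundary of a suitable parametrized moduli space, in standard TQFT fashion.

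First, I recall that the cochain model $\phi_{\tilde L_0,\tilde L_1}$ of \eqref{eq:tilde-phi} is built by counting two kinds of configurations: pseudo-holomorphic strips with boundary on $(L_0,L_1)$ that carry one interior marked point at which a fixed cocycle representative of $B$ is inserted, plus boundary correction terms coming from the bounding cochains on $L_0$ and $L_1$ that constitute the $B$-equivariant structures $\tilde L_0$ and $\tilde L_1$ (these bounding cochains trivialize, on the chain level, the image of $B$ under the open-closed map to $\mathit{CF}^1(L_i,L_i)$). The presence of the correction terms is essential: without them, the naive operation would fail to be a chain map by a boundary term equal to the image of $B$ in $\mathit{HF}^1(L_i,L_i)$, and analogously it would fail to be a derivation by corresponding amounts.

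To establish the derivation identity on cohomology, I will consider the moduli space $\scrN$ of pseudo-holomorphic triangles with boundary conditions $(L_0,L_1,L_2)$ in cyclic order, carrying one interior marked point at which $B$ is inserted, as the position of that marked point varies over the interior of the disc. Pairing with inputs $x_1 \in \mathit{CF}^*(L_0,L_1)$, $x_2 \in \mathit{CF}^*(L_1,L_2)$ and reading off the output in $\mathit{CF}^*(L_0,L_2)$ gives a chain homotopy whose boundary (in the one-dimensional parametrized setting) decomposes into four types of limiting configurations: (a) strip-breaking at the input edge $L_0 L_1$, where the strip carries the $B$-insertion, producing the triangle product $\mu^2(x_2, \phi_{\tilde L_0,\tilde L_1}(x_1))$; (b) strip-breaking at the input edge $L_1 L_2$, producing $\mu^2(\phi_{\tilde L_1,\tilde L_2}(x_2), x_1)$; (c) strip-breaking at the output edge $L_0 L_2$, where the triangle first forms the product and then the $B$-insertion acts on it, yielding $\phi_{\tilde L_0,\tilde L_2}(\mu^2(x_2,x_1))$; and (d) the interior marked point reaching one of the three boundary edges $L_iL_j$, which is exactly where the bounding-cochain terms of $\tilde L_0,\tilde L_1,\tilde L_2$ enter and cancel the extra contributions by the defining property of the equivariant structures.

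Summing, the codimension-one boundary of $\scrN$ (combined with the analogous correction moduli spaces coming from the bounding cochains glued into the triangle along each of its three edges) equals
\[
\phi_{\tilde L_0,\tilde L_2}\bigl(\mu^2(x_2,x_1)\bigr) \;-\; \mu^2\bigl(\phi_{\tilde L_1,\tilde L_2}(x_2),x_1\bigr) \;-\; (-1)^{|x_2|}\mu^2\bigl(x_2,\phi_{\tilde L_0,\tilde L_1}(x_1)\bigr)
\]
plus an exact term involving the Floer differential and the chain-level product. Passing to cohomology yields the derivation property. The main obstacle, as usual in such arguments, is twofold: first, arranging the perturbation data consistently across the triangle moduli space with varying interior marked point and its strata-compatible gluings so that transversality holds uniformly; second, accounting for the Koszul signs in the contributions from configurations of type (d), where a strip carrying $B$ meets a boundary cochain from the equivariant structure. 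These bookkeeping issues are precisely those already treated in the analogous proof that $\phi_{\tilde L_0,\tilde L_1}$ is itself a chain map in \cite{seidel-solomon10}, and the same framework adapts to the triangle.
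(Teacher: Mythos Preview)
The paper does not prove this lemma; it simply cites \cite[Equation (4.9)]{seidel-solomon10}, where the derivation property was established. Your outline is in the spirit of that argument and is broadly correct, but a few points deserve tightening.

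First, the chain-level map is $\phi_{\tilde L_0,\tilde L_1}(a) = \phi^{1,2}_{L_0,L_1}(\beta,a) - \mu^2(\gamma_{L_1},a) + \mu^2(a,\gamma_{L_0})$ (see \eqref{eq:tilde-phi-map}), so the correction terms are genuine $\mu^2$-products with the bounding cochains $\gamma_{L_i}$, not something that appears only when the interior marked point hits the boundary. In your boundary type (d), what actually bubbles off when the interior puncture collides with the boundary arc labelled $L_i$ is a $\phi^{1,1}_{L_i}$-disc, producing $\phi^{1,1}_{L_i}(\beta)$ inserted into a $\mu^3$. These terms are then matched against separate correction homotopies of the form $\mu^3(\gamma_{L_2},x_2,x_1)$, $\mu^3(x_2,\gamma_{L_1},x_1)$, $\mu^3(x_2,x_1,\gamma_{L_0})$ (using $\mu^1(\gamma_{L_i}) = \phi^{1,1}_{L_i}(\beta)$ and the $A_\infty$-relation for $\mu^3$). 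You allude to this with ``correction moduli spaces coming from the bounding cochains glued into the triangle,'' but the mechanism is the $A_\infty$-associator $\mu^3$, which you do not mention explicitly.

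Second, $\Phi_{\tilde L_0,\tilde L_1}$ has degree $0$, so on cohomology the derivation identity carries no Koszul sign: $\Phi(x_2 x_1) = \Phi(x_2)x_1 + x_2\Phi(x_1)$. The factor $(-1)^{|x_2|}$ in your displayed formula is spurious at the cohomology level (chain-level signs depend on conventions, but your final statement should not have it).
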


\begin{corollary} \label{th:degree-0}
$\Phi_{\tilde{L},\tilde{L}}$ acts trivially on $\mathit{HF}^0(L,L) \iso H^0(L;\bK) \iso \bK$. \qed
\end{corollary}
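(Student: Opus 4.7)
The plan is to invoke the standard algebraic fact that any derivation on a unital ring annihilates the unit, applied to the triangle product on $\mathit{HF}^*(L,L)$.

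First, since $L$ is connected, $\mathit{HF}^0(L,L) \iso H^0(L;\bK) = \bK\cdot 1_L$, where $1_L$ denotes the two-sided unit for the triangle product \eqref{eq:lag-product} that was recalled just before the corollary. It therefore suffices to show $\Phi_{\tilde L,\tilde L}(1_L) = 0$.

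Second, apply Lemma \ref{th:derivation} to the identity $1_L \cdot 1_L = 1_L$. Since $|1_L| = 0$, the Leibniz rule carries no sign, and we obtain
\[
\Phi_{\tilde{L},\tilde{L}}(1_L) \;=\; \Phi_{\tilde{L},\tilde{L}}(1_L \cdot 1_L) \;=\; \Phi_{\tilde{L},\tilde{L}}(1_L)\cdot 1_L \,+\, 1_L \cdot \Phi_{\tilde{L},\tilde{L}}(1_L) \;=\; 2\,\Phi_{\tilde{L},\tilde{L}}(1_L).
\]
Subtracting yields $\Phi_{\tilde{L},\tilde{L}}(1_L) = 0$, which holds over any field $\bK$ (regardless of characteristic, including $\mathrm{char}(\bK) = 2$, since the equation rearranges to $-\Phi_{\tilde L,\tilde L}(1_L) = 0$). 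By $\bK$-linearity this suffices to conclude that $\Phi_{\tilde{L},\tilde{L}}$ vanishes on all of $\mathit{HF}^0(L,L)$.

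There is essentially no obstacle: the argument is a one-line consequence of Lemma \ref{th:derivation} combined with the connectedness of $L$ and the existence of a two-sided unit. The only point that requires the reader's attention is that the product intended in the phrase ``derivation with respect to the product \eqref{eq:lag-product}'' is the triangle product on endomorphism algebras, so that the unit $1_L$ of $\mathit{HF}^*(L,L)$ is an honest multiplicative identity — which is what makes the usual derivation-kills-the-unit argument go through without modification.
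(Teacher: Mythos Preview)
Your proof is correct and is exactly the argument the paper intends: the corollary is stated with a bare \qed\ immediately after Lemma \ref{th:derivation}, meaning it is the standard ``derivation kills the unit'' consequence you have written out.
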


These endomorphisms can be used to define a refined intersection number \cite[Definition 4.3]{seidel-solomon10}, written formally as a function of a parameter $q$. It assigns to a pair $(\tilde{L}_0,\tilde{L}_1)$ the expression
\begin{equation} \label{eq:q-function}
q \longmapsto \sum_\sigma q^\sigma \chi(\mathit{HF}^*(L_0,L_1)_{\sigma}),
\end{equation}
where $\sigma$ runs over elements of the algebraic closure $\bar{\bK}$; and $\mathit{HF}^*(L_0,L_1)_{\sigma}$ is the corresponding generalized eigenspace of \eqref{eq:tilde-phi} inside $\mathit{HF}^*(L_0,L_1) \otimes_{\bK} \bar{\bK}$, whose Euler characteristic we take. Setting $q = 1$ recovers the ordinary intersection number via \eqref{eq:hf-euler}. For our purpose, what is relevant is the first piece of information beyond that, expressed formally as the derivative of \eqref{eq:q-function} at $q = 1$:

\begin{definition}
For $B$-equivariant Lagrangian submanifolds $\tilde{L}_0$, $\tilde{L}_1$, the supertrace (Lefschetz trace) of \eqref{eq:tilde-phi} will be denoted by
\begin{equation} \label{eq:bullet}
\begin{aligned}
\tilde{L}_0 \bullet \tilde{L}_1 & = \mathrm{Str}(\Phi_{\tilde{L}_0,\tilde{L}_1}) \\
& = \sum_k (-1)^k \mathrm{Tr}(\Phi_{\tilde{L}_0,\tilde{L}_1}^k) = 
\sum_\sigma \sigma\, \chi(\mathit{HF}^*(L_0,L_1)_{\sigma}) \; \in \bK.
\end{aligned}
\end{equation}
\end{definition}

The first equality in \eqref{eq:bullet} is the definition (and shows that $\tilde{L}_0 \bullet \tilde{L}_1$ lies in $\bK \subset \bar{\bK}$). The others are reformulations, spelling out the notion of Lefschetz trace in two equivalent ways. The notation $\Phi_{\tilde{L}_0,\tilde{L}_1}^k$ stands for the degree $k$ part, meaning the action of $\Phi_{\tilde{L}_0,\tilde{L}_1}$ on $\mathit{HF}^k(L_0,L_1)$. Because the definition is based on Floer cohomology, the following is obvious:

\begin{corollary} \label{th:basic} 
If $L_0$ and $L_1$ are disjoinable by a Hamiltonian isotopy, then $\tilde{L}_0 \bullet \tilde{L}_1 = 0$. \qed
\end{corollary}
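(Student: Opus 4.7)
The plan is to observe that $\tilde L_0 \bullet \tilde L_1$ is by definition the supertrace of an endomorphism of the vector space $\mathit{HF}^*(L_0,L_1)$, so it suffices to argue that this Floer cohomology group vanishes whenever $L_0$ and $L_1$ can be made disjoint by a Hamiltonian isotopy. Since the supertrace of any endomorphism of the zero vector space is zero, the corollary will follow immediately.

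To establish the vanishing, I would invoke the usual Hamiltonian invariance of Lagrangian Floer cohomology in this setup: if $\phi$ is the time-one map of a compactly supported Hamiltonian isotopy with $\phi(L_1)\cap L_0 = \emptyset$, then $\mathit{HF}^*(L_0,L_1)\cong \mathit{HF}^*(L_0,\phi(L_1))$ canonically. In the exact Liouville framework of Setup~\ref{th:setup} and Setup~\ref{th:setup-lagrangian}, this invariance is standard, and one can realize the isomorphism on the chain level via the corresponding continuation map. Now compute $\mathit{HF}^*(L_0,\phi(L_1))$ using the data $(H_{L_0,\phi(L_1)},J_{L_0,\phi(L_1)})$ obtained by taking a sufficiently small time-dependent perturbation of the constant choice: since $L_0$ and $\phi(L_1)$ are already disjoint compact subsets of $M$, the time-one chords \eqref{eq:periodic-orbit-2} can be made to have empty set of solutions, so the Floer complex itself is zero.

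The only subtlety, which I expect to be the main (mild) obstacle, is that the $B$-equivariant structures $\tilde L_0,\tilde L_1$ have been fixed at the outset, and the endomorphism $\Phi_{\tilde L_0,\tilde L_1}$ is defined on the Floer complex built from some specific auxiliary data, not on its Hamiltonian-invariant replacement. But this is not a real problem: the supertrace is an invariant of the chain homotopy class of the endomorphism, and compatibility of the chain-level open-closed operations with continuation maps (as recalled in Section~\ref{sec:dilations}) ensures that $\Phi_{\tilde L_0,\tilde L_1}$ transports along the continuation quasi-isomorphism to an endomorphism of $\mathit{HF}^*(L_0,\phi(L_1))=0$. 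Its supertrace is therefore zero, which is exactly $\tilde L_0 \bullet \tilde L_1$ by definition \eqref{eq:bullet}.
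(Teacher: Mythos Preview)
Your proposal is correct and matches the paper's own reasoning: the paper simply notes that ``the definition is based on Floer cohomology'' and marks the corollary with a \qed, i.e.\ exactly your observation that $\tilde L_0 \bullet \tilde L_1$ is the supertrace of an endomorphism of $\mathit{HF}^*(L_0,L_1)$, which vanishes by Hamiltonian invariance. Your discussion of the ``subtlety'' is harmless but unnecessary, since $\Phi_{\tilde L_0,\tilde L_1}$ is already defined on the cohomology $\mathit{HF}^*(L_0,L_1)$, which is canonically zero in this situation.
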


For a $B$-equivariant Lagrangian submanifold, there is a class 
\begin{equation} \label{eq:improved-class2}
\lbr\tilde{L}\rbr \in \tilde{H}^n,
\end{equation}
which maps to $\lbr L \rbr$ under the map $\tilde{H}^* \rightarrow \mathit{HF}^*(-\mu)$ from \eqref{eq:cone-les}. The details, like the definition of $\tilde{H}^*$ itself, are not difficult, see Section \ref{sec:operations} below. The following relation between \eqref{eq:bullet} and the pairing \eqref{eq:i-pairing} is the main insight of this paper:

\begin{theorem} \label{th:cardy2}
$I(\lbr\tilde{L}_0\rbr,\lbr\tilde{L}_1\rbr) = (-1)^{n(n+1)/2} \tilde{L}_0 \bullet \tilde{L}_1$.
\end{theorem}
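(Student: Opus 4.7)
This is a Cardy-type statement: both sides should arise as counts on a one-parameter family of decorated Riemann surfaces, and the identity will come from comparing the two boundary degenerations of that family. The plan is to build a moduli space of decorated annuli $A$ whose two boundary circles are mapped to $L_0$ and $L_1$, carrying an interior insertion corresponding to $B \in \mathit{HF}^1(2\mu)$ and the extra cochain data that encode the $B$-equivariant structures on $\tilde{L}_0$ and $\tilde{L}_1$. Concretely, one obtains such a family by gluing two copies of the decorated disk that defines the operator $\Phi$ in \cite{seidel-solomon10} along their outgoing boundary arcs; the resulting modulus becomes the one-parameter parameter, with two natural ends.

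At the first (open-string) end, $A$ is cut along a chord joining its two boundary circles, exhibiting the count as the Lefschetz supertrace of an endomorphism of $\mathit{HF}^*(L_0,L_1)$. By construction, that endomorphism is $\Phi_{\tilde{L}_0,\tilde{L}_1}$, so this end produces $\mathrm{Str}(\Phi_{\tilde{L}_0,\tilde{L}_1}) = \tilde{L}_0 \bullet \tilde{L}_1$. At the second (closed-string) end, $A$ is stretched along a separating circle and breaks into two disks with boundary on $L_0$ and $L_1$ respectively, joined at an interior node. Each half-surface, with its share of the $B$-insertion and the equivariant cochain, is exactly the geometric object whose count defines the class $\lbr \tilde{L}_i \rbr \in \tilde{H}^n$. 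The pairing along the neck, combined with the cone structure of $\tilde{H}^*$ from \eqref{eq:cone-les}, is by design the pairing $I$. Hence this end gives $I(\lbr \tilde{L}_0 \rbr, \lbr \tilde{L}_1 \rbr)$. The two counts agree up to an overall sign coming from a parametrized cobordism, and that sign is $(-1)^{n(n+1)/2}$: the same dimension-dependent discrepancy between intersection-theoretic and Floer-theoretic orientation conventions that appears in \eqref{eq:hf-euler}.

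The substantive work is in the closed-string end. By Lemma \ref{th:pairing-2}, $I$ is not just the naive duality pairing $\langle \cdot,\cdot \rangle$ of \eqref{eq:chain-duality}: it contains a correction detecting the chain-level non-commutativity of the pair-of-pants product on $\mathit{HF}^*(\mu)$. In the geometric picture that correction must arise from boundary strata near the nodal limit --- where the interior $B$-insertion can collide with the neck, or where the two halves of the equivariant-structure cochain pass through each other --- and the bookkeeping must confirm that these strata contribute precisely the extra term encoded in $I$, and that the two half-disks really assemble into $\lbr \tilde{L}_0 \rbr$ and $\lbr \tilde{L}_1 \rbr$ rather than only representatives of their images in $\mathit{HF}^n(-\mu)$.

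The main obstacle I anticipate is exactly this cochain-level accounting at the closed-string degeneration: matching the secondary (non-commutativity) piece of $I$ against a genuine boundary stratum of the parametrized moduli, with no leftover terms and with consistent orientations. A secondary but nontrivial obstacle is extracting the sign $(-1)^{n(n+1)/2}$ via Koszul-sign comparison for open-closed maps on both sides. Once these are settled, the moduli cobordism produces the stated identity directly.
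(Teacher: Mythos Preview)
Your conceptual picture is correct---this is indeed a Cardy-type argument involving annuli with an interior $\beta$-insertion---but the implementation you sketch (a single one-parameter family with two ends) is too optimistic, and the paper's proof is organized quite differently.

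The central issue is that none of $\Phi_{\tilde{L}_0,\tilde{L}_1}$, $\lbr\tilde{L}_k\rbr$, or $I$ is defined by a single geometric operation. The endomorphism $\Phi$ is an algebraic sum of three chain-level maps, $\phi^{1,2}_{L_0,L_1}(\beta,\cdot) - \mu^2(\gamma_{L_1},\cdot) + \mu^2(\cdot,\gamma_{L_0})$; the cocycle for $\lbr\tilde{L}\rbr$ has two components $(\phi^{1,0}_L,\; (-1)^{n+1}\phi^{2,0}_L(\beta) + \check\phi^{1,1}_L(\gamma_L))$ living in the cone complex; and $\iota$ is given by an explicit three-term formula involving $\langle\cdot,\cdot\rangle$ and the $\ast$-product. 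So there is no ``decorated disk that defines $\Phi$'' to glue, and no single half-surface whose count is $\lbr\tilde{L}_k\rbr$. The proof in the paper is accordingly algebraic: one expands $\mathrm{Str}(\Phi)$ into three supertraces, rewrites the two $\mu^2$-supertraces via the classical one-parameter Cardy relations (the operations $\psi^{0,1}$ and $\check\psi^{0,1}$, equations \eqref{eq:phi-check-phi}--\eqref{eq:phi-check-phi-2}), and then matches the remaining terms against the formula for $\iota$.

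The piece your sketch most underestimates is the handling of the $\phi^{1,2}(\beta,\cdot)$-supertrace together with the $\ast$-correction in $I$. This is not a boundary stratum of a one-parameter family; it requires a genuine \emph{two}-parameter family (Proposition~\ref{th:hexagon}). The compactified parameter space is a hexagon with six boundary sides, obtained as the real blowup of the KSV pentagon $\bar\scrR$ at one corner. Five sides correspond to compositions of the previously defined operations, and the sixth---the exceptional side created by the blowup---is precisely where $\langle\beta,\phi^{1,0}_{L_0}\ast\phi^{1,0}_{L_1}\rangle$ appears. The rotation-number bookkeeping for the tangent directions (your ``non-commutativity'' concern) is what forces the blowup and singles out $\ast$ rather than, say, the opposite ordering. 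Once Proposition~\ref{th:hexagon} is in hand, the proof of Theorem~\ref{th:cardy2} is a short algebraic substitution.

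So: the geometry you anticipate is there, but distributed across several auxiliary operations ($\phi^{2,0}$, $\psi^{0,1}$, $\check\psi^{0,1}$, $\ast$) and one two-parameter relation, with the final argument being term-matching at the chain level rather than a single cobordism. The sign $(-1)^{n(n-1)/2}$ (equivalently $(-1)^{n(n+1)/2}$ after the extra $(-1)^n$ from boundary ordering) enters via the comparison of the two degenerations of an annulus discussed in Examples~\ref{th:annulus-1}--\ref{th:annulus-2}, essentially as you guessed.
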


This is in fact a form of the Cardy relation, similar in principle to that in \cite{abouzaid10} (see also \cite{fukaya-oh-ohta-ono10b} and \cite[Section 5b]{seidel12}), but involving the class $B$ and the equivariant structures on the Lagrangian submanifolds as auxiliary data.

\subsection{Consequences}
From now on, let's impose the condition that $B$ should be a dilation (meaning that one sets $\lambda = 2\mu$ in Definition \ref{th:dilation}, with some arbitrary $\tilde{\lambda}$). Then \cite[Equation (4.16)]{seidel-solomon10}:

\begin{lemma} \label{th:degree-n}
$\Phi_{\tilde{L},\tilde{L}}$ acts as the identity on $\mathit{HF}^n(L,L) \iso H^n(L;\bK) \iso \bK$.
\end{lemma}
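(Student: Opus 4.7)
Since $L$ is closed, connected and oriented (being graded by Setup \ref{th:setup-lagrangian}), the space $\mathit{HF}^n(L,L) \iso H^n(L;\bK)$ is one-dimensional, and $\Phi_{\tilde L,\tilde L}$ acts on it as multiplication by some scalar $c \in \bK$. The goal is to show $c = 1$. The contrast with Corollary \ref{th:degree-0}, where the derivation property forces the corresponding scalar in degree $0$ to be $0$, already shows that the dilation hypothesis $\Delta B = 1 \in \mathit{HF}^0(2\mu)$ must drive the argument; the derivation property alone is insufficient.

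My plan follows the strategy of \cite[Section 4]{seidel-solomon10}. Let $e_L \in \mathit{HF}^n(L,L)$ be the Poincar\'e dual of a point, so that $c$ is extracted from $\Phi_{\tilde L,\tilde L}(e_L)$ by pairing against the unit $1_L$ via \eqref{eq:poincare-pairing}. At the chain level, $c$ is then computed by a count of holomorphic discs with boundary on $L$, carrying a chain representative $\beta$ of $B$ at an interior marked point, a boundary marked point pinned to a generic point of $L$ (representing $e_L$), and decorated along the rest of the boundary by the cochain $\tilde L$ that bounds the open-closed image of $\beta$.

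The crucial step is to let the pinned boundary marked point rotate once around $\partial D$. This produces a one-parameter family in which the bounding cochain $\tilde L$ along the sliding boundary arc reorganises into the rotation parameter $r \in S^1$ that underlies the definition of the BV operator \eqref{eq:bv-operator-1}. After applying Stokes' theorem to the parameter interval, the chain-level identity that results reduces $c$ to the augmentation of $\Delta B$: namely, the value at $B$ of the composition $\mathit{HF}^1(2\mu) \xrightarrow{\Delta} \mathit{HF}^0(2\mu) \to H^0(M;\bK) = \bK$, in which the second arrow is the inverse of \eqref{eq:morse-to-floer} at low slope, composed with a continuation map. Because $\Delta B = 1$ and the augmentation sends $1$ to $1$, this gives $c = 1$.

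The main obstacle is executing the rotation/degeneration argument rigorously, including the Koszul sign analysis caused by sliding $\tilde L$ along the boundary and by cyclic-symmetry relations such as \eqref{eq:cyclic-symmetry-1}. The signs must conspire to give $+1$ on the top class, in contrast with the $0$ they produce in degree $0$; this is exactly what is worked out in \cite[Equation (4.16)]{seidel-solomon10}, and it will fit naturally into the cochain-level operadic framework developed in Sections \ref{sec:operations}--\ref{sec:technical}.
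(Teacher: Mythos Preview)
Your approach is correct and matches the paper's, which simply defers to \cite[Equation (4.16)]{seidel-solomon10}; you have correctly identified the rotation argument linking the boundary point constraint to the BV parameter, with the dilation hypothesis $\Delta B = 1$ supplying the nonzero scalar. One small correction: the ``augmentation'' $\mathit{HF}^0(2\mu) \to \bK$ you want is the closed-open map $\phi^{1,1}_L$ landing in $\mathit{HF}^0(L,L) \iso \bK$, not an inverse of \eqref{eq:morse-to-floer} composed with a continuation map (continuation maps \eqref{eq:continuation-map} increase slope, so they go the wrong way for what you wrote).
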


By combining this with the derivation property (Lemma \ref{th:derivation}) and looking at the pairing \eqref{eq:poincare-pairing}, we get \cite[Corollary 4.6]{seidel-solomon10}:

\begin{corollary} \label{th:duality}
Under the duality isomorphism \eqref{eq:poincare-duality-lag}, $\Phi_{\tilde{L}_1,\tilde{L}_0}$ corresponds to the dual of $\mathit{id} - \Phi_{\tilde{L}_0,\tilde{L}_1}$.
\end{corollary}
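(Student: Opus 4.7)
\textbf{Proof plan for Corollary \ref{th:duality}.}

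The plan is to derive the claim as a direct consequence of three ingredients already at our disposal: the fact that \eqref{eq:poincare-pairing} is built by composing the triangle product \eqref{eq:lag-product} with integration over $L_0$; the derivation property of Lemma \ref{th:derivation}; and the normalization given by Lemma \ref{th:degree-n}. No additional geometry should be needed.

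First I would apply the derivation property of Lemma \ref{th:derivation} to the triangle product
\begin{equation*}
\mathit{HF}^*(L_1,L_0) \otimes \mathit{HF}^*(L_0,L_1) \longrightarrow \mathit{HF}^*(L_0,L_0),
\end{equation*}
for the triple of $B$-equivariant Lagrangians $(\tilde{L}_0,\tilde{L}_1,\tilde{L}_0)$. Since $\Phi_{\tilde{L}_0,\tilde{L}_1}$ and its cousins preserve grading (degree zero), the derivation identity for $y \in \mathit{HF}^{n-*}(L_1,L_0)$ and $x \in \mathit{HF}^*(L_0,L_1)$ takes the unsigned form
\begin{equation*}
\Phi_{\tilde{L}_0,\tilde{L}_0}(y\,x) \;=\; \Phi_{\tilde{L}_1,\tilde{L}_0}(y)\cdot x \,+\, y\cdot \Phi_{\tilde{L}_0,\tilde{L}_1}(x) \;\in\; \mathit{HF}^n(L_0,L_0).
\end{equation*}

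Next I would integrate over $L_0$ using the pairing \eqref{eq:poincare-pairing}. The right hand side becomes $\langle \Phi_{\tilde{L}_1,\tilde{L}_0}(y),x \rangle + \langle y, \Phi_{\tilde{L}_0,\tilde{L}_1}(x)\rangle$, directly by definition of $\langle\cdot,\cdot\rangle$ as $\int_{L_0}$ of the triangle product. For the left hand side, the key observation is that $y\,x$ lies in $\mathit{HF}^n(L_0,L_0)$, where by Lemma \ref{th:degree-n} the endomorphism $\Phi_{\tilde{L}_0,\tilde{L}_0}$ acts as the identity. Hence the left hand side equals $\int_{L_0}(y\,x) = \langle y,x\rangle$. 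Rearranging gives
\begin{equation*}
\langle \Phi_{\tilde{L}_1,\tilde{L}_0}(y),x\rangle \;=\; \langle y,(\mathrm{id} - \Phi_{\tilde{L}_0,\tilde{L}_1})(x)\rangle,
\end{equation*}
which, together with nondegeneracy of the pairing, is exactly the assertion that $\Phi_{\tilde{L}_1,\tilde{L}_0}$ is the dual of $\mathrm{id} - \Phi_{\tilde{L}_0,\tilde{L}_1}$ under \eqref{eq:poincare-duality-lag}.

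There is no serious obstacle here; the whole argument is essentially a one-line manipulation. The only points that deserve some care are verifying that Lemma \ref{th:derivation} indeed applies to mixed triples of distinct $B$-equivariant Lagrangians (not merely to $L_0=L_1=L_2$), and keeping the sign conventions consistent. Since $\Phi$ has degree zero the Koszul sign in the Leibniz rule is trivial, so these bookkeeping checks cause no difficulty.
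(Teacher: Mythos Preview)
Your proposal is correct and follows essentially the same approach as the paper: the paper derives the corollary by combining the derivation property (Lemma \ref{th:derivation}) with the normalization of Lemma \ref{th:degree-n} via the pairing \eqref{eq:poincare-pairing}, which is exactly what you have written out in detail.
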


This has the following implications for \eqref{eq:bullet}:

\begin{corollary} \label{th:new-main-properties}
(i) $\tilde{L}_1 \bullet \tilde{L}_0 = (-1)^{n+1} \tilde{L}_0 \bullet \tilde{L}_1 + (-1)^{n(n-1)/2} [L_0] \cdot [L_1]$.

(ii) If $n$ is even,  
$2(\tilde{L} \bullet \tilde{L}) = \chi(L)$, where for $\mathrm{char}(\bK) > 0$, the right hand side is the Euler characteristic reduced mod $\mathrm{char}(\bK)$. If $\mathrm{char}(\bK) = 2$, one finds that $\chi(L)$ must be even, and then there is a refined
equality $\tilde{L} \bullet \tilde{L} = \chi(L)/2 \in \bK$.

(iii) If $n$ is odd and $\mathrm{char}(\bK) = 2$, then $\tilde{L} \bullet \tilde{L} = \chi_{1/2}(L)$ is the semi-characteristic \eqref{eq:kervaire}.

(iv) If $L$ is a $\bK$-homology sphere, $\tilde{L} \bullet \tilde{L} = (-1)^n$. 
\end{corollary}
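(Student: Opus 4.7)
The overall strategy is to reduce every part of the corollary to a single trace identity extracted from Corollary \ref{th:duality}. Taking traces in each degree of the statement that $\Phi_{\tilde{L}_1,\tilde{L}_0}$ is dual to $\mathrm{id}-\Phi_{\tilde{L}_0,\tilde{L}_1}$ yields
\begin{equation*}
\mathrm{tr}\bigl(\Phi_{\tilde{L}_1,\tilde{L}_0}^{k}\bigr)\;=\;\dim \mathit{HF}^{n-k}(L_0,L_1)\;-\;\mathrm{tr}\bigl(\Phi_{\tilde{L}_0,\tilde{L}_1}^{n-k}\bigr)
\end{equation*}
for every $k$. Combined with Corollary \ref{th:degree-0} (trace zero in degree $0$) and Lemma \ref{th:degree-n} (trace one in degree $n$), this will be the only analytic input.

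For (i), I would multiply the displayed identity by $(-1)^k$ and sum; after the substitution $j=n-k$ the right-hand side becomes $(-1)^n\chi(\mathit{HF}^*(L_0,L_1))-(-1)^n\,\tilde{L}_0\bullet\tilde{L}_1$. Inserting \eqref{eq:hf-euler} and observing that $n+n(n+1)/2\equiv n(n-1)/2\pmod{2}$ yields the stated identity. Part (iv) is then immediate: for a $\bK$-homology sphere only degrees $0$ and $n$ contribute to \eqref{eq:bullet}, and those two traces are pinned at $0$ and $1$. The unrefined part of (ii) follows from (i) with $L_0=L_1=L$, combined with \eqref{eq:euler} and the fact that $(-1)^{n+1}=-1$ for $n$ even.

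For (iii), specialize the trace identity to $L_0=L_1=L$ and note that in characteristic $2$ all signs in \eqref{eq:bullet} vanish; pairing $(k,n-k)$, which has no fixed point since $n$ is odd, gives
\begin{equation*}
\tilde{L}\bullet\tilde{L}\;=\;\sum_{k=0}^{(n-1)/2}\bigl(\mathrm{tr}(\Phi^{k})+\mathrm{tr}(\Phi^{n-k})\bigr)\;=\;\sum_{k=0}^{(n-1)/2}\dim H^{k}(L;\bK)\;=\;\chi_{1/2}(L).
\end{equation*}

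The main obstacle is the refined char $2$ statement in (ii), which needs more than the bare trace identity. Here I would exploit that on the middle degree $\mathit{HF}^{n/2}(L,L)$, Corollary \ref{th:duality} forces $\Phi_{\tilde{L},\tilde{L}}^{n/2}$ and $\mathrm{id}-\Phi_{\tilde{L},\tilde{L}}^{n/2}$ to have the same characteristic polynomial; hence the generalized eigenvalues are stable under $\sigma\mapsto 1-\sigma$. In characteristic $2$ the equation $\sigma=1-\sigma$ has no solution, so all eigenvalues genuinely pair up, forcing $\dim H^{n/2}(L;\bK)$ to be even, and each pair contributes $\sigma+(1+\sigma)=1$ to the trace. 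This identifies $\mathrm{tr}(\Phi_{\tilde{L},\tilde{L}}^{n/2})$ with $\dim H^{n/2}(L;\bK)/2$ reduced mod $2$. Combining with the pairing identity on the remaining degrees $k<n/2$, and rewriting $\chi(L)/2\bmod 2$ via Poincar\'e duality over $\bF_2$ (which shows $\chi(L)=2\sum_{k<n/2}(-1)^k\dim H^k+(-1)^{n/2}\dim H^{n/2}$), completes the identification $\tilde{L}\bullet\tilde{L}=\chi(L)/2$.
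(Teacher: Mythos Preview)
Your proof is correct and, for parts (i), (iii), (iv) and the unrefined half of (ii), proceeds exactly as the paper does: the paper also derives (i) directly from Corollary \ref{th:duality} and \eqref{eq:hf-euler}, gets the first half of (ii) by specializing (i), proves (iii) by the same degree pairing $\mathrm{Tr}(\Phi^k)+\mathrm{Tr}(\Phi^{n-k})=\dim H^k(L;\bK)$, and reads off (iv) from Corollary \ref{th:degree-0} and Lemma \ref{th:degree-n}.

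The one place where your route differs is the refined $\mathrm{char}(\bK)=2$ statement in (ii). The paper applies the eigenvalue involution $\sigma\leftrightarrow 1-\sigma$ to the full $\bZ/2$-graded Floer cohomology at once, obtaining $\mathit{HF}^*(L,L)_\sigma\cong \mathit{HF}^*(L,L)_{1-\sigma}$ and then summing $\sigma\,\chi(\mathit{HF}^*_\sigma)$ over a set of representatives. You instead localize the eigenvalue argument to the middle degree $n/2$ (where the Poincar\'e pairing is a genuine self-duality, so $\Phi^{n/2}$ and $\mathrm{id}-\Phi^{n/2}$ share a characteristic polynomial), and handle all other degrees with the bare trace identity. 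Your version is a legitimate and slightly more elementary reorganization of the same idea; it avoids invoking the $\bZ/2$-graded eigenspace isomorphism but requires the extra observation about the middle degree. Both approaches yield $\tilde{L}\bullet\tilde{L}=\chi(L)/2$ in $\bF_2$.
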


\begin{proof} 
(i) is an immediate consequence of Corollary \ref{th:duality} and \eqref{eq:hf-euler}.

(ii) The first statement follows from (i) by setting $\tilde{L}_1 = \tilde{L}_0 = \tilde{L}$. For the more refined statement in $\mathrm{char}(\bK) = 2$, note that Corollary \ref{th:duality} implies that the generalized eigenspaces of $\Phi_{\tilde{L},\tilde{L}}$ satisfy
\begin{equation} \label{eq:dim-equal}
\mathit{HF}^*(L,L)_{\sigma} \iso \mathit{HF}^*(L,L)_{1-\sigma},
\end{equation}
where the isomorphism is one of $\bZ/2$-graded vector spaces. Because we are in characteristic $2$, $\sigma \neq 1-\sigma$ for all $\sigma \in \bar{\bK}$, hence \eqref{eq:dim-equal} always relates different eigenspaces. This implies that the total dimension of Floer cohomology, hence also its Euler characteristic, must be even. More precisely, we have
\begin{equation}
(-1)^{n(n+1)/2} \chi(L)/2 = \sum_{\sigma \in S} \chi(\mathit{HF}^*(L,L)_{\sigma}) \in \bZ,
\end{equation}
where on the right hand side we use a subset $S$ of eigenvalues which contains exactly one out of each pair $\{\sigma,1-\sigma\}$. Reducing both sides mod $2$ removes the sign on the left hand side, and allows one to write the right hand side as
\begin{equation}
\sum_{\sigma \in S} \sigma\, \chi(\mathit{HF}^*(L,L)_{\sigma}) +
(1-\sigma) \chi(\mathit{HF}^*(L,L)_{1-\sigma}) = \tilde{L} \bullet \tilde{L}.
\end{equation}

(iii) Write $\Phi_{\tilde{L},\tilde{L}}^k$ for the action on degree $k$ cohomology. By Corollary \ref{th:duality}, 
\begin{equation}
\mathrm{Tr}(\Phi_{\tilde{L},\tilde{L}}^k) + \mathrm{Tr}(\Phi_{\tilde{L},\tilde{L}}^{n-k}) = \mathrm{dim}\, H^k(L;\bK) \text{ mod } 2. 
\end{equation}
Therefore,
\begin{equation}
\tilde{L} \bullet \tilde{L} = \sum_{k=0}^{(n-1)/2} \mathrm{Tr}(\Phi_{\tilde{L},\tilde{L}}^k) + \mathrm{Tr}(\Phi_{\tilde{L},\tilde{L}}^{n-k}) = \chi_{1/2}(L).
\end{equation}

(iv) Follows immediately from Corollary \ref{th:degree-0} and Lemma \ref{th:degree-n}.
\end{proof}

With Theorem \ref{th:cardy2} as well as Corollaries \ref{th:basic} and \ref{th:new-main-properties} as our main tools, we now proceed to establish the main results.

\begin{proof}[Proof of Theorem \ref{th:1}]
Let $(L_1,\dots,L_r)$ be a collection of Lagrangian submanifolds as in the statement of the theorem (in fact, we do not strictly speaking need them to be pairwise disjoinable, but only that $\mathit{HF}^*(L_i,L_j) = 0$ for $i \neq j$). Each $L_i$ is automatically exact, can be equipped with a grading, and can also be made $B$-equivariant; all this because $H^1(L_i;\bK) = 0$.

By Corollary \ref{th:basic} and Corollary \ref{th:new-main-properties}(iv), we have
\begin{equation} \label{eq:diagonal-matrix}
\tilde{L}_i \bullet \tilde{L}_j = \begin{cases} (-1)^n & i = j, \\ 0 & i \neq j. \end{cases}
\end{equation}
In view of Theorem \ref{th:cardy2}, this means that the classes $\lbr \tilde{L}_i \rbr$ must be linearly independent, which yields the desired bound $r \leq N$, with $N = \mathrm{dim}\, \tilde{H}^n$.
\end{proof}

It is worth while to discuss the bound we have just obtained a little further. Let $\rho$ be the rank of either of the maps \eqref{eq:b-duality} for $k = n$. Because they appear as connecting maps in \eqref{eq:cone-les}, we get
\begin{equation}
\mathrm{dim}\, \tilde{H}^n = 2(\mathrm{dim}\, \mathit{HF}^n(\mu) - \rho).
\end{equation}
Lemma \ref{th:pairing-1} shows that the image of $\mathit{HF}^n(\mu)$ inside $\tilde{H}^n$ is a half-dimensional subspace which is isotropic for $I$. Because \eqref{eq:diagonal-matrix} is nondegenerate, the intersection of the subspace spanned by the $\lbr \tilde{L}_i \rbr$ with any isotropic subspace can be at most of dimension $r/2$. Moreover, if $\bK \subset \bR$, then \eqref{eq:diagonal-matrix} is definite, hence any such intersection must be actually $0$. This implies that:

\begin{corollary} \label{th:linearly-independent}
In the situation of Theorem \ref{th:1}, the classes $(\lbr L_1 \rbr, \dots, \lbr L_r \rbr)$ must span a subspace of $\mathit{HF}^n(-\mu)$ of dimension $\geq r/2$. If $\bK \subset \bR$, that bound can be improved to $r$, meaning that $(\lbr L_1 \rbr,\dots, \lbr L_r \rbr)$ are actually linearly independent. \qed
\end{corollary}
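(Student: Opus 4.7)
My plan is to combine the proof of Theorem \ref{th:1} with the long exact sequence \eqref{eq:cone-les} and the isotropy properties of $I$, using the connecting map to transport the linear independence already proved at the level of $\tilde{H}^n$ down to $\mathit{HF}^n(-\mu)$ at the cost of at most a factor of $2$.

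First I would set $V \subset \tilde{H}^n$ equal to the $r$-dimensional span of the classes $\lbr\tilde{L}_i\rbr$ (linearly independent by the proof of Theorem \ref{th:1}), and let $\pi: \tilde{H}^n \to \mathit{HF}^n(-\mu)$ denote the connecting map from \eqref{eq:cone-les}, so $\pi(\lbr\tilde{L}_i\rbr) = \lbr L_i \rbr$. Combining Theorem \ref{th:cardy2} with \eqref{eq:diagonal-matrix} shows that the Gram matrix of $I|_V$ in the basis $(\lbr\tilde{L}_i\rbr)$ is $\pm \mathrm{Id}_r$; in particular, $I|_V$ is a nondegenerate symmetric bilinear form on $V$.

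Next I would identify $\ker \pi$. By exactness, $\ker \pi$ equals the image $W$ of $\mathit{HF}^n(\mu) \to \tilde{H}^n$. Lemma \ref{th:pairing-1} immediately implies that $W$ is isotropic for $I$: if $\tilde{x}_0, \tilde{x}_1 \in W$ then $\tilde{x}_0$ projects to $\xi_0 = 0 \in \mathit{HF}^n(-\mu)$, so $I(\tilde{x}_0, \tilde{x}_1) = -\langle 0, x_1\rangle = 0$. Hence $V \cap W$ is isotropic for $I|_V$, and standard linear algebra on nondegenerate symmetric forms forces $\dim(V \cap W) \leq r/2$. Therefore $\pi(V) = \mathrm{span}(\lbr L_i \rbr)$ has dimension at least $r - r/2 = r/2$. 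If $\bK \subset \bR$, the form $\pm \mathrm{Id}_r$ is definite, hence has no nonzero isotropic vectors, so $V \cap W = 0$ and the $\lbr L_i \rbr$ are actually linearly independent.

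The only substantive Floer-theoretic inputs are Theorem \ref{th:cardy2} and Lemma \ref{th:pairing-1}, both already established, so this last step is essentially bookkeeping. The point to watch is that the Gram matrix of $I$ on $V$ really is diagonal (hence automatically symmetric), which follows from \eqref{eq:diagonal-matrix} together with the pairwise disjoinability of the $L_i$ and the fact that each $L_i$ is a $\bK$-homology sphere; without these inputs one would have to contend with the asymmetry formula of Lemma \ref{th:pairing-2} and the argument would become noticeably more delicate.
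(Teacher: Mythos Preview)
Your argument is correct and essentially identical to the paper's own: both use that the image $W$ of $\mathit{HF}^n(\mu)$ in $\tilde{H}^n$ is isotropic for $I$ (via Lemma~\ref{th:pairing-1}), that the Gram matrix of $I$ on $V = \mathrm{span}(\lbr\tilde{L}_i\rbr)$ is $\pm\mathrm{Id}_r$ (via Theorem~\ref{th:cardy2} and \eqref{eq:diagonal-matrix}), and hence that $V \cap W = \ker(\pi|_V)$ has dimension at most $r/2$ (or $0$ when $\bK \subset \bR$). The paper simply states this in the paragraph preceding the corollary rather than as a separate proof.
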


\begin{proof}[Proof of Theorem \ref{th:2}]
This uses the same argument as Theorem \ref{th:1}, except that the non-triviality of the diagonal entries in \eqref{eq:diagonal-matrix} is now a consequence of Corollary \ref{th:basic}(iii).
\end{proof}

\begin{proof}[Proof of Theorem \ref{th:3}]
As explained in Example \ref{th:stabilized-milnor-fibre}, these manifolds $M$ have property (H). The argument from Corollary \ref{th:linearly-independent} applies, and shows that the $\lbr L_i \rbr$ are linearly independent in $\mathit{HF}^n(-\mu)$. But by the dual of Definition \ref{th:property-h}(iii), the map $\mathit{HF}^n(-\mu) \rightarrow H^n_{\mathit{cpt}}(M;\bK)$ is an isomorphism, and we know that it maps $\lbr L_i \rbr$ to the ordinary homology class $[L_i]$. Hence, the $[L_i]$ are themselves linearly independent.
\end{proof}

\begin{proof}[Proof of Theorem \ref{th:4}]
This is the same as for Theorem \ref{th:3}, with the weaker bound coming from Corollary \ref{th:linearly-independent}.
\end{proof}

\begin{proof}[Proof of Theorem \ref{th:5}]
This is the same as for Theorem \ref{th:4}, except that as in the proof of Theorem \ref{th:2}, we use Corollary \ref{th:basic}(iii) to obtain \eqref{eq:diagonal-matrix}
\end{proof}

\begin{example} \label{th:lens-space}
Let $M = T^*L$ be the cotangent bundle of a three-dimensional lens space $L = L(p,q)$, and take $\mathrm{char}(\bK) = 0$. As in Example \ref{th:cotangent-sphere}, one has
\begin{equation}
\mathit{SH}^*(M) \iso H_{3-*}(\scrL L;\bK).
\end{equation}
In particular, the direct summand of $\mathit{SH}^*(M)$ coming from contractible loops is isomorphic to $H_{3-*}(\scrL S^3;\bK)^{\bZ/p}$, and one can use that to show that $M$ admits a dilation, compare \cite[Example 6.4]{seidel-solomon10}. To be more precise, take a Hamiltonian such that $H_M(x) = \|x\|$ at infinity, with respect to the round metric. Then, again following Example \ref{th:cotangent-sphere}, for all sufficiently large $\mu$ one has
\begin{equation}
\mathit{HF}^3(\mu) \iso \mathit{SH}^3(M) \iso H_0(\scrL L;\bK) \iso \bK^p.
\end{equation}
The proof of Theorem \ref{th:1} then yields an upper bound of $2p$ for the number of pairwise disjoinable Lagrangian $\bK$-homology spheres in $M$. 

This behaviour (linear growth in $p$) may seem far off the mark, in view of the nearby Lagrangian conjecture, but it can be motivated as follows. One can enlarge the Fukaya category by admitting Lagrangian submanifolds (still as in Setup \ref{th:setup-lagrangian}) equipped with flat $\bK$-vector bundles $\xi_L \rightarrow L$. The analogue of \eqref{eq:hf-h} in that context is that Floer cohomology reproduces ordinary cohomology with local coefficients:
\begin{equation}
\mathit{HF}^*((L,\xi_0),(L,\xi_1)) \iso H^*(L;\mathit{Hom}_{\bK}(\xi_0,\xi_1)).
\end{equation}
All our results apply to such objects as well. In the current example, suppose that we take $\bK = \bC$. The zero-section $L$ admits flat complex line bundles $(\xi_0,\dots,\xi_{p-1})$, which have holonomy $1,e^{2\pi i/p},\dots,e^{2\pi i (p-1)/p}$ around a fixed generator of $\pi_1(L) \iso \bZ/p$. The resulting objects of the enlarged Fukaya category behave formally like pairwise disjoinable Lagrangian homology spheres:
\begin{equation} \label{eq:pseudo-disjoint}
\mathit{HF}^*((L,\xi_i),(L,\xi_j)) \iso \begin{cases} H^*(L;\bC) & i = j, \\ 0 & \text{otherwise}. 
\end{cases}
\end{equation}
This gets us within a factor of $2$ of the upper bound derived above. 

Alternatively, one could use $\bK = \bR$, where our upper bound can be improved to $p$ using Corollary \ref{th:linearly-independent}. However, $\xi_i$ is isomorphic to $\xi_{p-i}$ as a real flat vector bundle. Hence, the number of non-isomorphic indecomposable flat bundles is $[p/2]+1$, which is still not sharp (except for the case $p = 2$, which is $L = \bR P^3$).
\end{example}

At this point, our remaining tasks are: the precise definition of the pairing \eqref{eq:i-pairing}, with the proof of its properties (Lemmas \ref{th:pairing-1} and, less importantly for our purpose, Lemma \ref{th:pairing-2}); and the proof of Theorem \ref{th:cardy2}.

\section{Chain level operations\label{sec:operations}}

\subsection{General framework}
The structure of operations on Floer cochain complexes is governed by the geometry of a certain class of Riemann surfaces. To bring out this structure clearly, we temporarily suppress many of the technical details, and proceed in a style reminiscent of TCFTs (Topological Conformal Field Theories, compare e.g.\ \cite{costello05}). Of course, many examples have already occured in Section \ref{sec:dilations}, even though at that point we did not attempt to put them in a common framework.

\begin{setup} \label{th:setup-2}
Take $S = \bar{S} \setminus \Sigma$, where $\bar{S}$ is a compact Riemann surface with boundary, and $\Sigma$ is a finite set of points, which may include both
\begin{equation}
\left\{
\begin{aligned}
& \text{interior points } \;
\Sigma^{\mathit{cl}} = \Sigma \setminus \partial \bar{S} \;\text{ and} \\
& \text{boundary points }\; \Sigma^{\mathit{op}} = \Sigma \cap \partial \bar{S}.
\end{aligned}
\right.
\end{equation}
The case where $S$ itself is closed is excluded. At each point of $\Sigma^{\mathit{cl}}$, we want to have additional structure, namely a preferred tangent direction (a nonzero tangent vector on $\bar{S}$ determined up to positive real multiples). We further assume that the points of $\Sigma$ have been divided into {\em inputs} and {\em outputs}, denoted by $\Sigma^{\mathit{in}}$ and $\Sigma^{\mathit{out}}$. Write
\begin{equation}
\begin{aligned}
& \Sigma^{\mathit{cl,out}} = \Sigma^{\mathit{cl}} \cap \Sigma^{\mathit{out}},
&& \Sigma^{\mathit{cl,in}} = \Sigma^{\mathit{cl}} \cap \Sigma^{\mathit{in}}, \\
& \Sigma^{\mathit{op,out}} = \Sigma^{\mathit{op}} \cap \Sigma^{\mathit{out}},
&& \Sigma^{\mathit{op,in}} = \Sigma^{\mathit{op}} \cap \Sigma^{\mathit{in}}.
\end{aligned}
\end{equation}

Additionally, $S$ should come with a real one-form $\nu_S \in \Omega^1(S)$ such that 
\begin{equation} \label{eq:one-form}
\left\{
\begin{aligned}
& d\nu_S \leq 0 && \text{everywhere}, \\
& d\nu_S = 0 && \text{near $\Sigma$,} \\
& \nu_S|\partial S = 0 && \text{near $\Sigma^{\mathit{op}}$,}
\end{aligned}
\right.
\end{equation}
where the last condition means that $\nu_S|\partial S \in \Omega^1(\partial S)$ vanishes at points close enough to $\Sigma^{\mathit{op}}$. This allows us to associate to each point $\zeta \in \Sigma$ a real number $\lambda_\zeta \in \bR$. Namely, for $\zeta \in \Sigma^{\mathit{cl}}$, we define $\lambda_\zeta$ by integrating $\nu_S$ along a small loop around $\zeta$; this winds clockwise if $\zeta$ is an input, and anticlockwise if it is an output. For $\zeta \in \Sigma^{\mathit{op}}$, we define $\lambda_\zeta$ by integrating $\nu_S$ along a small path going from one component of $\partial S$ close to $\zeta$ to the other one; as before, the winding direction is clockwise for inputs and anticlockwise for outputs.
\end{setup}

So far, we have considered the Riemann surfaces by themselves. Adding target space data is done as follows:

\begin{setup} \label{th:setup-2.5}
Fix a target manifold as in Setup \ref{th:setup}. Moreover, for each pair $(L_0,L_1)$ of Lagrangian submanifolds as in Setup \ref{th:setup-lagrangian} fix a number $\lambda_{L_0,L_1} \in \bR$ (it is possible, but not necessary, to set all these numbers to $0$). 

For $\zeta \in \Sigma^{\mathit{cl}}$, we then require that $\lambda_\zeta \notin \scrP$. We also want to have a Lagrangian submanifold $L_C$ associated to each boundary component $C \subset \partial S$. For any point $\zeta \in \Sigma^{\mathit{op}}$, this determines a pair of Lagrangian submanifolds $(L_{\zeta,0},L_{\zeta,1})$. If $\zeta$ is an input, then $L_{\zeta,0}$ is associated to the component of $\partial S$ preceding $\zeta$ in the boundary orientation, and $L_{\zeta,1}$ to the successive one; whereas if $\zeta$ is an output, the convention is opposite. Given that, we require that 
\begin{equation}
\lambda_\zeta = \lambda_{L_{\zeta,0},L_{\zeta,1}} \;\; \text{ for $\zeta \in \Sigma^{\mathit{op}}$.}
\end{equation}
\end{setup}

Assume that Floer cochain complexes $\mathit{CF}^*(\lambda)$ and $\mathit{CF}^*(L_0,L_1)$ have been defined (the latter definition involves the constants $\lambda_{L_0,L_1}$, even though the outcome is independent of that choice up to quasi-isomorphism). The operation associated to a single $S$ as in Setup \ref{th:setup-2}, \ref{th:setup-2.5}, is a chain map
\begin{equation} \label{eq:operation}
\bigotimes_{\zeta \in \Sigma^{\mathit{cl,in}}} \mathit{CF}^*(\lambda_\zeta) \otimes \bigotimes_{\zeta \in \Sigma^{\mathit{op,in}}} \mathit{CF}^*(L_{\zeta,0},L_{\zeta,1})
\longrightarrow
\bigotimes_{\zeta \in \Sigma^{\mathit{cl,out}}} \mathit{CF}^*(\lambda_\zeta) \otimes \bigotimes_{\zeta \in \Sigma^{\mathit{op,out}}} \mathit{CF}^*(L_{\zeta,0},L_{\zeta,1})
\end{equation}
of degree $n(-\chi(\bar{S}) + 2|\Sigma^{\mathit{cl,out}}| + |\Sigma^{\mathit{op,out}}|)$. 

\begin{remark} \label{th:fail-2}
The distinction of points of $\Sigma^{\mathit{cl}}$ into inputs and outputs is only a formality: turning one into the other amounts to applying \eqref{eq:chain-duality} to the associated maps \eqref{eq:operation}. The same is not quite true for $\Sigma^{\mathit{op}}$, because we do not have a strict chain level duality underlying \eqref{eq:poincare-duality-lag}, see Remark \ref{th:fail}.
\end{remark}

As mentioned at the start of our discussion, several constructions from Section \ref{sec:dilations} fall into this general category (or rather, the underlying cochain level structures do):

{\em Continuation maps \eqref{eq:continuation-map}.} As already described in our original discussion of these maps, the relevant Riemann surface is $S = \bR \times S^1$, with $\zeta_0 = \{s = -\infty\}$ considered as output, and $\zeta_1 = \{s = +\infty\}$ as input. The one-form is chosen so that the real numbers associated to the two points of $\Sigma$ are the $\lambda_0,\lambda_1$ appearing in \eqref{eq:continuation-map}. The tangent directions at both points are chosen to point along the line $\{t = 0\}$ (since $S$ has a rotational symmetry, the coordinate-independent part of this choice is the fact that the two tangent directions point towards each other).

{\em Unit elements \eqref{eq:1-element}.} These have $S = (\bR \times S^1) \cup \{s = +\infty\} \iso \bC$, with the only remaining point $\zeta = \{s = -\infty\}$ being an output. As before, our convention is to choose the tangent vector at $\zeta$ to point along $\{t = 0\}$ (but this time, this choice has no coordinate-independent meaning).

{\em Duality for Lagrangian Floer cohomology \eqref{eq:poincare-duality-lag}.} Take $S = \bR \times [0,1]$ with boundary conditions $L_0,L_1$, but where both ends are taken to be inputs. This defines a chain map $\mathit{CF}^*(L_1,L_0) \otimes \mathit{CF}^{n-*}(L_0,L_1) \rightarrow \bK$, which gives rise to a nondegenerate pairing on cohomology.

{\em Hamiltonian Floer cohomology classes associated to Lagrangian submanifolds \eqref{eq:improved-class}.} For these, one takes $S = (-\infty,0] \times S^1$, which has a single boundary component labeled with the Lagrangian submanifold $L$. We again choose $\{t = 0\}$ as the tangent direction at the point $\zeta = \{s = -\infty\}$ (but this has no intrinsic meaning, just as in the case of the unit elements). Note that because there is no restriction on $\nu_S|\partial S$, the condition that $d\nu_S \leq 0$ everywhere does not place any constraint on the number $\lambda = \lambda_\zeta \in \bR \setminus \scrP_M$. We denote the resulting cochain representative of $\lbr L \rbr$ by
\begin{equation}
\phi^{1,0}_L \in \mathit{CF}^n(\lambda).
\end{equation}

{\em Open-closed string maps \eqref{eq:open-closed-string-map}, \eqref{eq:dual-open-closed-string-map}.} We denote the underlying cochain level maps by 
\begin{equation} \label{eq:oc-chain-maps}
\begin{aligned}
& \phi^{1,1}_L: \mathit{CF}^*(\lambda) \longrightarrow \mathit{CF}^*(L,L), \\
& \check{\phi}^{1,1}_L: \mathit{CF}^*(L,L) \longrightarrow \mathit{CF}^{*+n}(\lambda).
\end{aligned}
\end{equation}
Both of them are defined using isomorphic Riemann surfaces, namely a disc with one interior and one boundary puncture. For $\phi^{1,1}_L$, the convention is that the preferred tangent direction at the interior puncture (which is an input) points towards the boundary puncture (which is an output). For $\check{\phi}^{1,1}_L$, the roles of input and output are exchanged, and we also require that the tangent direction should point in the opposite way (away from the boundary puncture). 

\begin{remark} \label{th:fail-3}
Since the induced cohomology level maps \eqref{eq:open-closed-string-map}, \eqref{eq:dual-open-closed-string-map} are dual, by introducing separate chain level models we are allowing a certain amount of redundancy. The fact that we find it convenient to do so is an effect of the asymmetry first pointed out in Remarks \ref{th:fail} and Remark \ref{th:fail-2}.
\end{remark}

{\em Pair-of-pants product \eqref{eq:product}:} We denote the underlying cochain map by
\begin{equation} \label{eq:smile-product}
\smile\;: \mathit{CF}^*(\lambda_2) \otimes \mathit{CF}^*(\lambda_1) \longrightarrow \mathit{CF}^*(\lambda_1+\lambda_2).
\end{equation}
The underlying Riemann surface $S$ is a three-punctured sphere (two inputs, one output). To fix the conventions more precisely, we identify $\bar{S}$ with the standard round sphere, and then take the punctures to be equidistributed along the equator. The tangent direction at each puncture goes towards the next one, where the convention is that for the expression $\langle x_3, x_2 \smile x_1 \rangle$, the punctures are cyclically ordered in a way that corresponds to $(x_1,x_2,x_3)$. By definition,
the one-form $\nu_S$ is necessarily closed.

One can arrange the choices so that \eqref{eq:cyclic-symmetry-1} is strictly realized on the cochain level, meaning that 
\begin{equation} \label{eq:cyclic-symmetry-2}
\begin{aligned} &
\langle x_3, x_2 \smile x_1 \rangle = (-1)^{|x_3|} \langle x_2, x_1 \smile x_3 \rangle = (-1)^{|x_1|} 
\langle x_1 \smile x_3, x_2 \rangle \\
& \text{for } x_1 \in \mathit{CF}^*(\lambda_1), \; x_2 \in \mathit{CF}^*(\lambda_2), \;
x_3 \in \mathit{CF}^*(-\lambda_1-\lambda_2).
\end{aligned}
\end{equation}
This is technically unproblematic because cyclic permutations act freely on all possible triples $(\lambda_1,\lambda_2,\lambda_3)$ (which is a consequence of the requirement that $\lambda_k \neq 0$); hence, what \eqref{eq:cyclic-symmetry-2} is asking is merely that we should coordinate the choices associated to two different geometric situations, in parallel with \eqref{eq:swap-sign}.

{\em Triangle product \eqref{eq:lag-product}:} The chain map underlying this product is written as
 \begin{equation} \label{eq:mu2}
\mu^2_{L_0,L_1,L_2}: \mathit{CF}^*(L_1,L_2) \otimes \mathit{CF}^*(L_0,L_1) \longrightarrow \mathit{CF}^*(L_0,L_2).
\end{equation}
To define it, one takes $S$ to be a disc with three boundary punctures. 

\begin{remark}
The formalism built above does not accommodate maps which mix ordinary (Morse or singular) cochains and Floer complexes, such as \eqref{eq:morse-to-floer}, \eqref{eq:dual-morse-to-floer} and \eqref{eq:hf-h}. This is acceptable for our purpose, since standard properties of those maps are all we need. 
\end{remark}

The next level of sophistication concerns operations induced by $\smooth$-families of Riemann surfaces (each as in Setup \ref{th:setup-2}, \ref{th:setup-2.5}) parametrized by a compact oriented manifold $R$. The simplest situation is where $R$ is closed. In that case, the associated operation is a chain map as in \eqref{eq:operation}, but whose degree is lower by $\mathrm{dim}(R)$. Two examples of this have occurred previously:

{\em BV operator.} 
The chain level structure underlying \eqref{eq:bv-operator-1} will be written as
\begin{equation} \label{eq:bv-operator}
\delta: \mathit{CF}^*(\lambda) \longrightarrow \mathit{CF}^{*-1}(\lambda).
\end{equation}
Take a family $(S_r)$ parametrized by $r \in S^1 = \bR/\bZ$. As a Riemann surface, $S_r = \bR \times S^1$ is independent of $r$, and so is the one-form $\nu_{S_r} = \lambda\, \mathit{dt}$. The preferred tangent direction at $s = -\infty$ corresponds to $t = r$, while that at $s = +\infty$ corresponds to $t =0$. Equivalently, $\bar{S} = \bC P^1$ with two marked points, and the tangent direction at one of those points rotates anticlockwise with $r$, while the other is constant (up to isomorphism, the choice of which marked point to use for each behaviour is irrelevant). 

{\em Lie bracket \eqref{eq:lie-bracket}.}
One can define this in terms of a family of three-punctured spheres parametrized by $S^1$, where the tangent directions vary with the parameter. However, we prefer to break up the parameter space into two intervals, and correspondingly to write the bracket as a sum of two terms \eqref{eq:lie}, see below.

\begin{remark}
At this point, the importance of the choice of tangent directions at points of $\Sigma^{\mathit{cl}}$ has hopefully become clear: if those directions were irrelevant, both the BV operator and the bracket would have to vanish (which is indeed what happens for closed symplectic manifolds).
\end{remark}

A slightly more complicated situation occurs when the parameter space $R$ is a compact manifold with boundary (or corners). In that case, the resulting operation is still a map whose degree is $\mathrm{dim}(R)$ lower than in \eqref{eq:operation}. However, instead of being a chain map, it is a nullhomotopy for the operation associated to $\partial R$ (or, in the case where $R$ has corners, for the sum of operations associated to the codimension one boundary faces). Here is one important example of such an operation:

{\em Homotopy commutativity.} Consider a family $(S_r)$ of Riemann surfaces parametrized by $r \in R = [0,1]$. Each $S_r$ is the standard sphere with three marked points equidistributed along the equator, exactly as in the definition of the pair-of-pants-product. In fact, for $r = 1$ we use exactly the setup (one-forms, tangent directions) from \eqref{eq:smile-product}. For $r = 0$ we use the pullback of that by the automorphism of the sphere which swaps the two input points and preserves the output point. This clearly defines the operation $(x_1,x_2) \mapsto (-1)^{|x_2| \cdot |x_1|} x_1 \smile x_2$. As $r$ varies from $0$ to $1$, all tangent directions perform a half-turn: anticlockwise for the output, clockwise for the inputs (Figure \ref{fig:star}).
\begin{figure}
\begin{centering}
\begin{picture}(0,0)%
\includegraphics{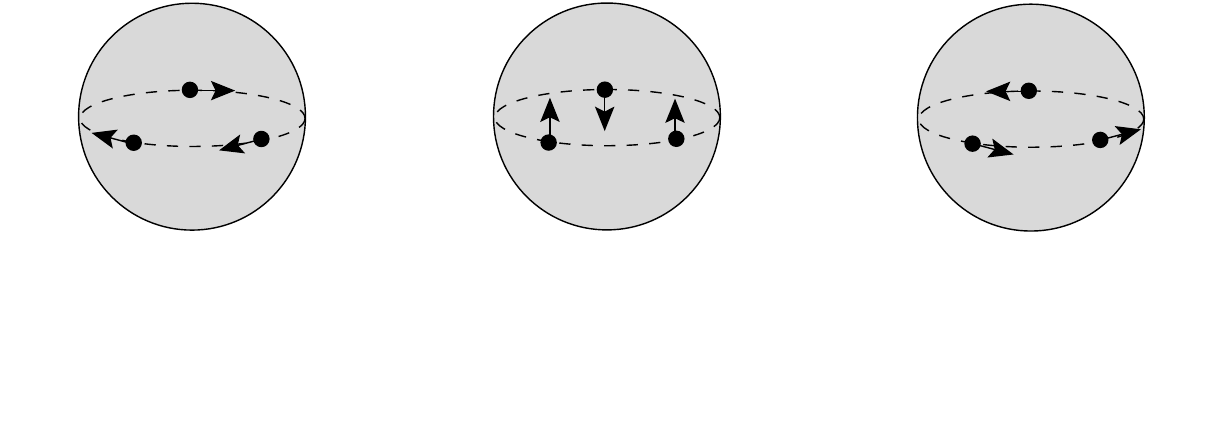}%
\end{picture}%
\setlength{\unitlength}{3552sp}%
\begingroup\makeatletter\ifx\SetFigFont\undefined%
\gdef\SetFigFont#1#2#3#4#5{%
  \reset@font\fontsize{#1}{#2pt}%
  \fontfamily{#3}\fontseries{#4}\fontshape{#5}%
  \selectfont}%
\fi\endgroup%
\begin{picture}(6494,2232)(-1173,-1829)
\put(-374,-1186){\makebox(0,0)[lb]{\smash{{\SetFigFont{11}{13.2}{\rmdefault}{\mddefault}{\updefault}{\color[rgb]{0,0,0}$r=0$}%
}}}}
\put(1230,-1085){\makebox(0,0)[lb]{\smash{{\SetFigFont{11}{13.2}{\rmdefault}{\mddefault}{\updefault}{\color[rgb]{0,0,0}tangent directions }%
}}}}
\put(4437, 12){\makebox(0,0)[lb]{\smash{{\SetFigFont{11}{13.2}{\rmdefault}{\mddefault}{\updefault}{\color[rgb]{0,0,0}output}%
}}}}
\put(4783,-550){\makebox(0,0)[lb]{\smash{{\SetFigFont{11}{13.2}{\rmdefault}{\mddefault}{\updefault}{\color[rgb]{0,0,0}input 2}%
}}}}
\put(-37, 17){\makebox(0,0)[lb]{\smash{{\SetFigFont{11}{13.2}{\rmdefault}{\mddefault}{\updefault}{\color[rgb]{0,0,0}output}%
}}}}
\put(301,-530){\makebox(0,0)[lb]{\smash{{\SetFigFont{11}{13.2}{\rmdefault}{\mddefault}{\updefault}{\color[rgb]{0,0,0}input 2}%
}}}}
\put(4141,-1178){\makebox(0,0)[lb]{\smash{{\SetFigFont{11}{13.2}{\rmdefault}{\mddefault}{\updefault}{\color[rgb]{0,0,0}$r=1$}%
}}}}
\put(3312,-549){\makebox(0,0)[lb]{\smash{{\SetFigFont{11}{13.2}{\rmdefault}{\mddefault}{\updefault}{\color[rgb]{0,0,0}input 1}%
}}}}
\put(-1158,-547){\makebox(0,0)[lb]{\smash{{\SetFigFont{11}{13.2}{\rmdefault}{\mddefault}{\updefault}{\color[rgb]{0,0,0}input 1}%
}}}}
\put(1230,-1310){\makebox(0,0)[lb]{\smash{{\SetFigFont{11}{13.2}{\rmdefault}{\mddefault}{\updefault}{\color[rgb]{0,0,0}rotate anticlockwise}%
}}}}
\put(1230,-1535){\makebox(0,0)[lb]{\smash{{\SetFigFont{11}{13.2}{\rmdefault}{\mddefault}{\updefault}{\color[rgb]{0,0,0}at output, clockwise}%
}}}}
\put(1230,-1760){\makebox(0,0)[lb]{\smash{{\SetFigFont{11}{13.2}{\rmdefault}{\mddefault}{\updefault}{\color[rgb]{0,0,0}at inputs}%
}}}}
\end{picture}%
\caption{\label{fig:star}}
\end{centering}
\end{figure}

From our general setup, it then follows that the resulting operation 
\begin{equation} \label{eq:star-product}
\ast : \mathit{CF}^*(\lambda_2) \otimes \mathit{CF}^*(\lambda_1) \longrightarrow \mathit{CF}^{*-1}(\lambda_1+\lambda_2)
\end{equation}
satisfies
\begin{equation} \label{eq:homotopy-commutativity}
d(x_2 \ast x_1) + dx_2 \ast x_1 + (-1)^{|x_2|} x_2 \ast dx_1 = x_2 \smile x_1 - (-1)^{|x_2| \, |x_1|} x_1 \smile x_2.
\end{equation}
In other words, this is a secondary product which ensures the homotopy commutativity of $\smile$. 
One can symmetrize $\ast$ to produce a bilinear chain map of degree $-1$, which is the chain level version of \eqref{eq:lie-bracket}:
\begin{equation} \label{eq:lie}
[x_2,x_1] = x_2 \ast x_1 + (-1)^{|x_1|\,|x_2|} x_1 \ast x_2.
\end{equation}
%
%


\begin{remark} \label{th:caution}
As this example demonstrates, the presence of an operation \eqref{eq:bv-operator} of degree $-1$ means that special care must be taken when defining secondary operations. There are many more families which interpolate between $S_0$ and $S_1$, but where the tangent directions rotate by different amounts (in $\pi + 2\pi \bZ$). This corresponds to adding integer multiples of $\delta (x_2 \smile x_1)$, $(\delta x_2) \smile x_1$ or $(-1)^{|x_2|} x_2 \smile \delta x_1$ to \eqref{eq:star-product}. Each of those modified operations would satisfy \eqref{eq:bv-relation-1}, but they are substantially different from each other.
\end{remark}

Finally, one can allow non-compact parameter spaces $R$ as long as the behaviour at infinity is tightly controlled. Namely, there must be a compactification $\bar{R}$ to a manifold with corners. As one approaches the boundary, the Riemann surfaces $S_r$ must stretch along tubular or strip-like ends, resulting in ``broken surfaces'' associated to points of $\bar{R} \setminus R$. A general description of the allowed behaviour would be quite lengthy, and we prefer a case-by-case discussion. For the most part, we will only encounter the unproblematic special case where $R$ is one-dimensional. There are also two two-dimensional instances relevant to us, one of which we will mention now (though its role in our overall argument is relatively minor).

{\em Relation between BV operator and Lie bracket.} Consider the cochain level version of \eqref{eq:bv-relation-1},
\begin{equation} \label{eq:bv-relation}
\delta( x_2 \smile x_1) - (\delta x_2) \smile x_1 - (-1)^{|x_2|} x_2 \smile (\delta x_1) - [x_2,x_1] = \text{nullhomotopic}.
\end{equation}
Geometrically, let's combine two copies of the interval defining \eqref{eq:star-product} (one with reversed orientation) to get a family over $S^1$, which defines \eqref{eq:lie}. Having done that, each term on the left hand side of \eqref{eq:bv-relation-1} corresponds to a particular family (three of them ``broken''; these are drawn in Figure \ref{fig:bv-relation}) parametrized by a circle. Unsurprisingly, to relate them, one can use a two-dimensional compactified parameter space $\bar{R}$ which is a genus zero surface with four boundary components. Removing three of the boundary circles yields a non-compact surface $R$. One can construct a family $(S_r)$ of surfaces parametrized by $r \in R$, with the following properties. Over $\partial R \iso S^1$ this reproduces the family underlying \eqref{eq:lie}; while, as one approaches any one of the three components of $\partial \bar{R} \setminus \partial R$, the surfaces $S_r$ are stretched along tubular ends. 

To show the existence of this two-dimensional family, consider all possible ways of choosing tangent directions at the three marked points on the equator of the sphere. The moduli space of all such choices can be identified with $(S^1)^3$, in particular its first homology is $\bZ^3$. If we move slightly inwards from each of three circles of $\bar{R} \setminus R$, which means gluing together the components of the ``broken'' surfaces from Figure \ref{fig:bv-relation}, we get three circles in our moduli space, whose homology classes are $(0,1,0)$, $(1,0,0)$ and $(0,0,-1)$, respectively. On the other hand, combining two copies of the interval from Figure \ref{fig:star} yields a circle in the homology class $(1,1,-1)$. Hence, there is a surface in moduli space which has these four circles (with the orientation of the last one reversed) as boundaries. It is not difficult to see, given the known topology of the moduli space, that the topology of this surface can be taken to be the $\bar{R}$ described above (actually, a surface with a different topology would do as well for our purpose). This argument is not new, being part of the general derivation of BV structures as operads associated to moduli spaces \cite{getzler94b}.
\begin{figure}
\begin{centering}
\begin{picture}(0,0)%
\includegraphics{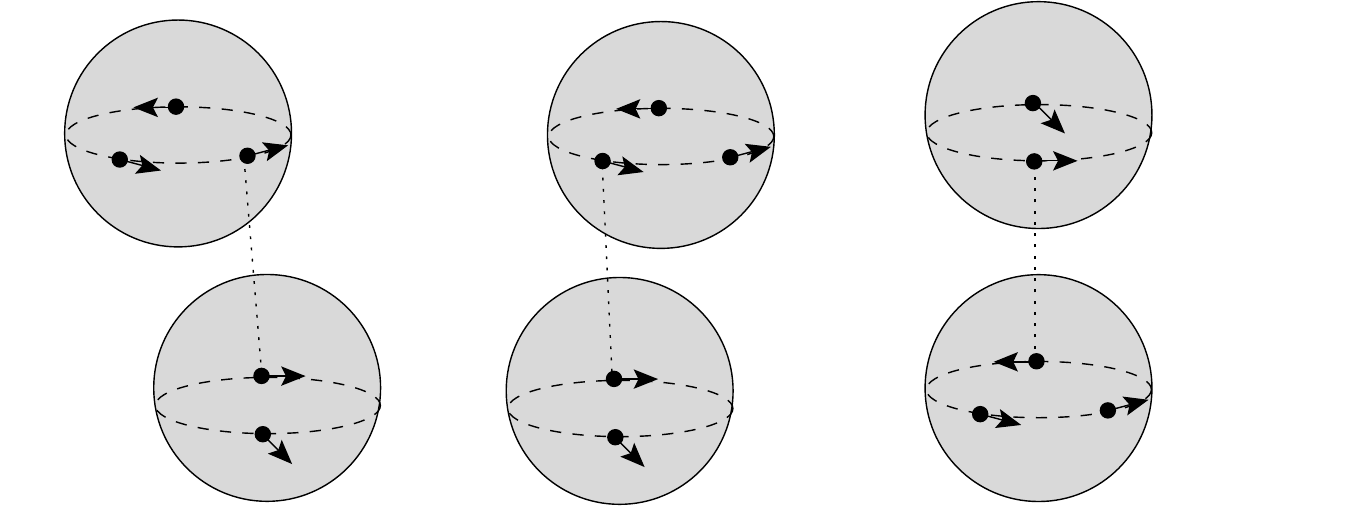}%
\end{picture}%
\setlength{\unitlength}{3552sp}%
\begingroup\makeatletter\ifx\SetFigFont\undefined%
\gdef\SetFigFont#1#2#3#4#5{%
  \reset@font\fontsize{#1}{#2pt}%
  \fontfamily{#3}\fontseries{#4}\fontshape{#5}%
  \selectfont}%
\fi\endgroup%
\begin{picture}(7256,2703)(-914,-850)
\put(491,-781){\makebox(0,0)[lb]{\smash{{\SetFigFont{11}{13.2}{\rmdefault}{\mddefault}{\updefault}{\color[rgb]{0,0,0}input points rotate clockwise}%
}}}}
\put(526,-61){\makebox(0,0)[lb]{\smash{{\SetFigFont{11}{13.2}{\rmdefault}{\mddefault}{\updefault}{\color[rgb]{0,0,0}output}%
}}}}
\put(2401,-61){\makebox(0,0)[lb]{\smash{{\SetFigFont{11}{13.2}{\rmdefault}{\mddefault}{\updefault}{\color[rgb]{0,0,0}output}%
}}}}
\put(526,839){\makebox(0,0)[lb]{\smash{{\SetFigFont{11}{13.2}{\rmdefault}{\mddefault}{\updefault}{\color[rgb]{0,0,0}input}%
}}}}
\put(-749,839){\makebox(0,0)[lb]{\smash{{\SetFigFont{11}{13.2}{\rmdefault}{\mddefault}{\updefault}{\color[rgb]{0,0,0}input}%
}}}}
\put( 76,1364){\makebox(0,0)[lb]{\smash{{\SetFigFont{11}{13.2}{\rmdefault}{\mddefault}{\updefault}{\color[rgb]{0,0,0}output}%
}}}}
\put(-899,1214){\makebox(0,0)[lb]{\smash{{\SetFigFont{11}{13.2}{\rmdefault}{\mddefault}{\updefault}{\color[rgb]{0,0,0}$\smile$}%
}}}}
\put(1726,1214){\makebox(0,0)[lb]{\smash{{\SetFigFont{11}{13.2}{\rmdefault}{\mddefault}{\updefault}{\color[rgb]{0,0,0}$\smile$}%
}}}}
\put(1801,839){\makebox(0,0)[lb]{\smash{{\SetFigFont{11}{13.2}{\rmdefault}{\mddefault}{\updefault}{\color[rgb]{0,0,0}input}%
}}}}
\put(3076,839){\makebox(0,0)[lb]{\smash{{\SetFigFont{11}{13.2}{\rmdefault}{\mddefault}{\updefault}{\color[rgb]{0,0,0}input}%
}}}}
\put(2626,1364){\makebox(0,0)[lb]{\smash{{\SetFigFont{11}{13.2}{\rmdefault}{\mddefault}{\updefault}{\color[rgb]{0,0,0}output}%
}}}}
\put(3751,-286){\makebox(0,0)[lb]{\smash{{\SetFigFont{11}{13.2}{\rmdefault}{\mddefault}{\updefault}{\color[rgb]{0,0,0}$\smile$}%
}}}}
\put(4651,1589){\makebox(0,0)[lb]{\smash{{\SetFigFont{11}{13.2}{\rmdefault}{\mddefault}{\updefault}{\color[rgb]{0,0,0}output point}%
}}}}
\put(4651,1439){\makebox(0,0)[lb]{\smash{{\SetFigFont{11}{13.2}{\rmdefault}{\mddefault}{\updefault}{\color[rgb]{0,0,0}rotates anticlockwise}%
}}}}
\put(4651,764){\makebox(0,0)[lb]{\smash{{\SetFigFont{11}{13.2}{\rmdefault}{\mddefault}{\updefault}{\color[rgb]{0,0,0}input}%
}}}}
\put(3826,-511){\makebox(0,0)[lb]{\smash{{\SetFigFont{11}{13.2}{\rmdefault}{\mddefault}{\updefault}{\color[rgb]{0,0,0}input}%
}}}}
\put(5026,-511){\makebox(0,0)[lb]{\smash{{\SetFigFont{11}{13.2}{\rmdefault}{\mddefault}{\updefault}{\color[rgb]{0,0,0}input}%
}}}}
\put(4651, 14){\makebox(0,0)[lb]{\smash{{\SetFigFont{11}{13.2}{\rmdefault}{\mddefault}{\updefault}{\color[rgb]{0,0,0}output}%
}}}}
\put(-299,-286){\makebox(0,0)[lb]{\smash{{\SetFigFont{11}{13.2}{\rmdefault}{\mddefault}{\updefault}{\color[rgb]{0,0,0}$\delta$}%
}}}}
\put(1576,-286){\makebox(0,0)[lb]{\smash{{\SetFigFont{11}{13.2}{\rmdefault}{\mddefault}{\updefault}{\color[rgb]{0,0,0}$\delta$}%
}}}}
\put(3826,1214){\makebox(0,0)[lb]{\smash{{\SetFigFont{11}{13.2}{\rmdefault}{\mddefault}{\updefault}{\color[rgb]{0,0,0}$\delta$}%
}}}}
\end{picture}%
\caption{\label{fig:bv-relation}}
\end{centering}
\end{figure}

\subsection{Secondary open-closed string maps}
We now turn to the operations which are central to our argument, and which go beyond those previously described in Section \ref{sec:dilations}. The first two of these have the following form:
\begin{align}
& \phi^{2,0}_L: \mathit{CF}^*(\lambda_1) \longrightarrow \mathit{CF}^{*+n-1}(\lambda_0), \label{eq:phi20} \\ 
& \phi^{1,2}_{L_0,L_1}: \mathit{CF}^*(\lambda) \otimes \mathit{CF}^*(L_0,L_1) \longrightarrow \mathit{CF}^{*-1}(L_0,L_1).
\label{eq:phi12}
\end{align}
Their basic properties are:
\begin{align}
\label{eq:phi-phi} & 
d \phi^{2,0}_L(x) + (-1)^n \phi^{2,0}_L(dx) = \check{\phi}^{1,1}_L(\phi^{1,1}_L(x)) - (-1)^{n|x|} x \smile \phi^{1,0}_L, \\
& 
\begin{aligned}
& \mu^1_{L_0,L_1}(\phi^{1,2}_{L_0,L_1}(x,a)) + \phi^{1,2}_{L_0,L_1}(dx,a) + (-1)^{|x|}\phi^{1,2}_{L_0,L_1}(x,\mu^1_{L_0,L_1}(a)) \\ & =
\mu^2_{L_0,L_1,L_1}(\phi^{1,1}_{L_1}(x),a) - (-1)^{|a| \cdot |x|} \mu^2_{L_0,L_0,L_1}(a,\phi^{1,1}_{L_0}(x)).
\end{aligned}
\end{align}

\begin{remark}
The analogue of the cohomology level relation expressed by \eqref{eq:phi-phi} in ordinary topology would be the fact that for $i: L \hookrightarrow M$, pullback followed by pushforward is cup product with the Poincar{\'e} dual class of $L$:
\begin{equation}
i_! i^* = [L] : H^*(M) \longrightarrow H^{*+n}(M). 
\end{equation}
\end{remark}

Let's look at $\phi^{2,0}_L$ in more detail. On the right hand side of \eqref{eq:phi-phi}, $\phi^{1,1}_L$ uses $\mathit{CF}^*(\lambda_1)$; $\check{\phi}^{1,1}_L$ uses $\mathit{CF}^*(\lambda_0)$; and $\phi^{1,0}_L \in \mathit{CF}^n(\lambda_0-\lambda_1)$. Of course, we assume tacitly that all those Floer complexes are defined, but otherwise the situation is unlike \eqref{eq:continuation-map} in that no relation between $\lambda_0$ and $\lambda_1$ needs to hold. The underlying family of Riemann surfaces is parametrized by $R = \bR$. Each $S_r$ is a disc with two interior punctures, carrying a closed one-form $\nu_{S_r}$ which has integral $\lambda_0$ around the input puncture, and $\lambda_1$ around the output puncture (hence $\int_{\partial S_r} \nu_{S_r} = \lambda_0-\lambda_1$). Moreover, we assume that the preferred tangent direction at the input puncture points towards the output puncture, while the tangent direction at the output puncture points away from the input puncture. As $r \rightarrow -\infty$, the two punctures collide, leading to bubbling off of a three-punctured holomorphic sphere. We equip that sphere with the additional structures associated to $\smile$, and the remaining once-punctured disc with those for $\phi^{1,0}_L$. In the other limit $r \rightarrow \infty$, the punctures move apart, and the disc is therefore stretched out into two discs joined at the boundary; we equip those with the structures associated to $\phi^{1,1}_L$ and $\check{\phi}^{1,1}_L$, respectively. See Figure \ref{fig:phi20} for a more precise picture of the limits (the dotted line merely indicates in which way the interior punctures are paired up). 

\begin{figure}
\begin{centering}
\begin{picture}(0,0)%
\includegraphics{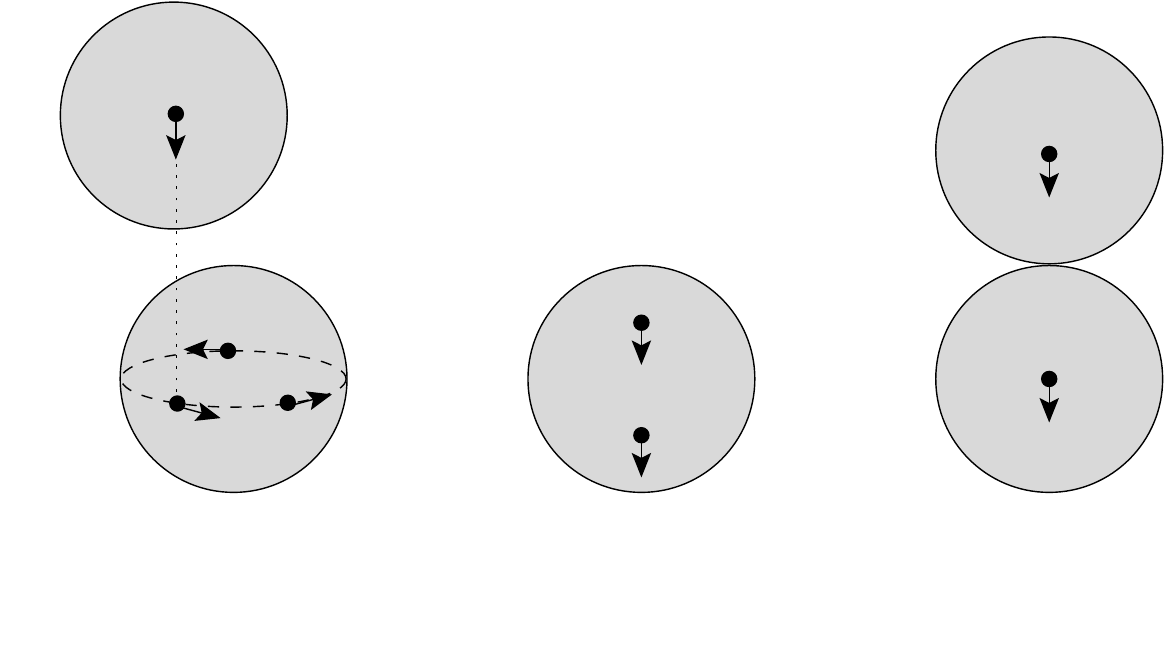}%
\end{picture}%
\setlength{\unitlength}{3552sp}%
\begingroup\makeatletter\ifx\SetFigFont\undefined%
\gdef\SetFigFont#1#2#3#4#5{%
  \reset@font\fontsize{#1}{#2pt}%
  \fontfamily{#3}\fontseries{#4}\fontshape{#5}%
  \selectfont}%
\fi\endgroup%
\begin{picture}(6209,3516)(136,-3269)
\put(1726,-2086){\makebox(0,0)[lb]{\smash{{\SetFigFont{11}{13.2}{\rmdefault}{\mddefault}{\updefault}{\color[rgb]{0,0,0}input}%
}}}}
\put(601,-2086){\makebox(0,0)[lb]{\smash{{\SetFigFont{11}{13.2}{\rmdefault}{\mddefault}{\updefault}{\color[rgb]{0,0,0}input}%
}}}}
\put(1126,-286){\makebox(0,0)[lb]{\smash{{\SetFigFont{11}{13.2}{\rmdefault}{\mddefault}{\updefault}{\color[rgb]{0,0,0}output}%
}}}}
\put(1082,-2746){\makebox(0,0)[lb]{\smash{{\SetFigFont{11}{13.2}{\rmdefault}{\mddefault}{\updefault}{\color[rgb]{0,0,0}$r=-\infty$}%
}}}}
\put(5507,-2746){\makebox(0,0)[lb]{\smash{{\SetFigFont{11}{13.2}{\rmdefault}{\mddefault}{\updefault}{\color[rgb]{0,0,0}$r=+\infty$}%
}}}}
\put(1374,-1527){\makebox(0,0)[lb]{\smash{{\SetFigFont{11}{13.2}{\rmdefault}{\mddefault}{\updefault}{\color[rgb]{0,0,0}output}%
}}}}
\put(2567,-2746){\makebox(0,0)[lb]{\smash{{\SetFigFont{11}{13.2}{\rmdefault}{\mddefault}{\updefault}{\color[rgb]{0,0,0}Marked points move apart as}%
}}}}
\put(2567,-2971){\makebox(0,0)[lb]{\smash{{\SetFigFont{11}{13.2}{\rmdefault}{\mddefault}{\updefault}{\color[rgb]{0,0,0}$r \rightarrow +\infty$, collide as}%
}}}}
\put(2567,-3196){\makebox(0,0)[lb]{\smash{{\SetFigFont{11}{13.2}{\rmdefault}{\mddefault}{\updefault}{\color[rgb]{0,0,0}$r \rightarrow -\infty$}%
}}}}
\put(5701,-1111){\makebox(0,0)[lb]{\smash{{\SetFigFont{11}{13.2}{\rmdefault}{\mddefault}{\updefault}{\color[rgb]{0,0,0}output}%
}}}}
\put(5701,-1411){\makebox(0,0)[lb]{\smash{{\SetFigFont{11}{13.2}{\rmdefault}{\mddefault}{\updefault}{\color[rgb]{0,0,0}input}%
}}}}
\put(5776,-511){\makebox(0,0)[lb]{\smash{{\SetFigFont{11}{13.2}{\rmdefault}{\mddefault}{\updefault}{\color[rgb]{0,0,0}input}%
}}}}
\put(5776,-1711){\makebox(0,0)[lb]{\smash{{\SetFigFont{11}{13.2}{\rmdefault}{\mddefault}{\updefault}{\color[rgb]{0,0,0}output}%
}}}}
\put(3601,-2011){\makebox(0,0)[lb]{\smash{{\SetFigFont{11}{13.2}{\rmdefault}{\mddefault}{\updefault}{\color[rgb]{0,0,0}output}%
}}}}
\put(3601,-1411){\makebox(0,0)[lb]{\smash{{\SetFigFont{11}{13.2}{\rmdefault}{\mddefault}{\updefault}{\color[rgb]{0,0,0}input}%
}}}}
\put(4726,-586){\makebox(0,0)[lb]{\smash{{\SetFigFont{11}{13.2}{\rmdefault}{\mddefault}{\updefault}{\color[rgb]{0,0,0}$\phi^{1,1}_L$}%
}}}}
\put(4726,-1711){\makebox(0,0)[lb]{\smash{{\SetFigFont{11}{13.2}{\rmdefault}{\mddefault}{\updefault}{\color[rgb]{0,0,0}$\check{\phi}^{1,1}_L$}%
}}}}
\put(526,-1711){\makebox(0,0)[lb]{\smash{{\SetFigFont{11}{13.2}{\rmdefault}{\mddefault}{\updefault}{\color[rgb]{0,0,0}$\smile$}%
}}}}
\put(151,-211){\makebox(0,0)[lb]{\smash{{\SetFigFont{11}{13.2}{\rmdefault}{\mddefault}{\updefault}{\color[rgb]{0,0,0}$\phi^{1,0}_L$}%
}}}}
\end{picture}%
\caption{\label{fig:phi20}}
\end{centering}
\end{figure}%

The map $\phi^{1,2}$ is maybe more familiar. It is the second component (after $\phi^{1,1}$) of a chain map, first mentioned in \cite{seidel02}, from $\mathit{CF}^*(\lambda)$ to the Hochschild cochain complex of the Fukaya category of $M$. As such, if $x$ is a cocycle then $\phi^{1,2}(x,\cdot)$ expresses the ``centrality'' of the elements $\phi^{1,1}(x)$. The underlying family of Riemann surfaces is again parametrized by $R = \bR$. Each $S_r$ is a disc with two boundary punctures and one interior puncture. We prefer to think of it as $S_r = (\bR \times [0,1]) \setminus \{(0,t_r)\}$, where the position of the interior puncture satisfies $t_r \rightarrow 0$ for $r \rightarrow -\infty$ and $t_r \rightarrow 1$ for $r \rightarrow +\infty$. The preferred tangent direction at that puncture is vertical up for $r \ll 0$, vertical down for $r \gg 0$, and rotates anticlockwise by $\pi$ between those two extremes. The one-forms $\nu_{S_r}$ are again closed. As $r \rightarrow \pm\infty$, the surfaces split into two components: one of them is a disc with three boundary punctures (with the same data used to define $\mu^2_{L_0,L_1,L_1}$ respectively $\mu^2_{L_0,L_0,L_1}$), and the other one is a disc with one interior and one boundary puncture (exactly as in $\phi^{1,1}_{L_0}$ respectively $\phi^{1,1}_{L_1}$); see Figure \ref{fig:phi12}.
\begin{figure}
\begin{centering}
\begin{picture}(0,0)%
\includegraphics{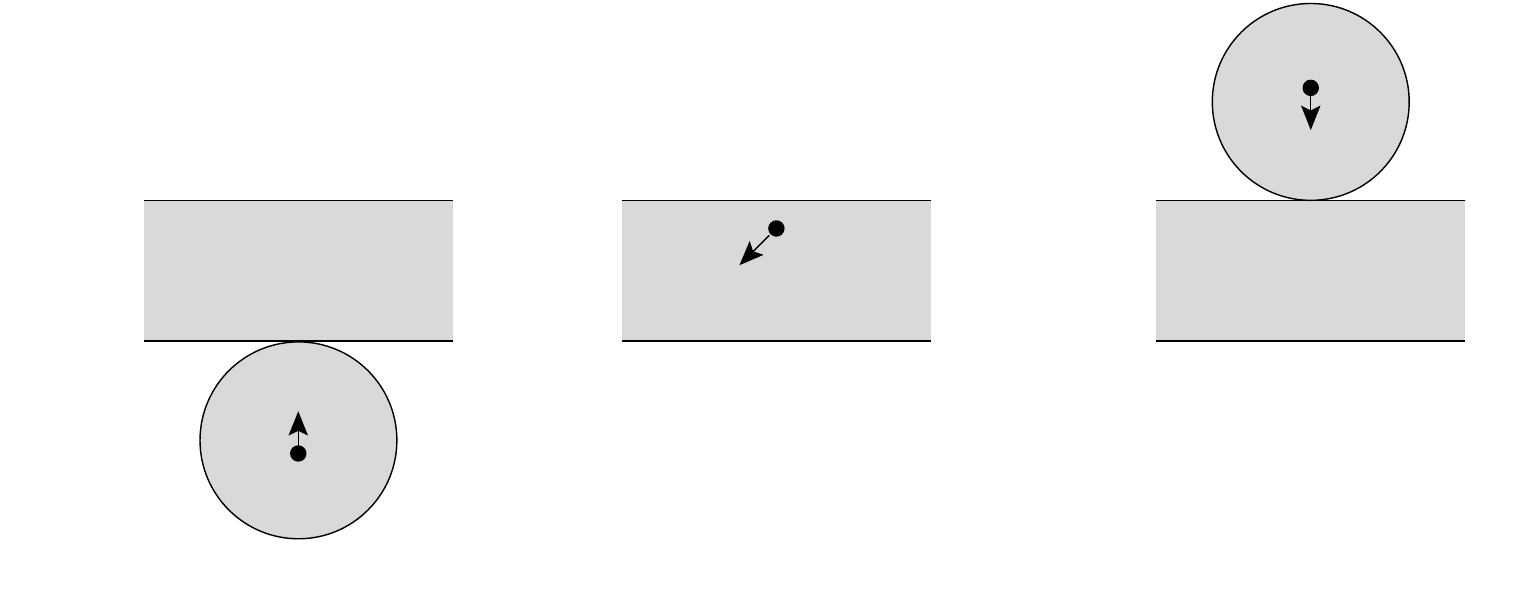}%
\end{picture}%
\setlength{\unitlength}{3552sp}%
\begingroup\makeatletter\ifx\SetFigFont\undefined%
\gdef\SetFigFont#1#2#3#4#5{%
  \reset@font\fontsize{#1}{#2pt}%
  \fontfamily{#3}\fontseries{#4}\fontshape{#5}%
  \selectfont}%
\fi\endgroup%
\begin{picture}(8064,3231)(-239,-2684)
\put(6376,-1636){\makebox(0,0)[lb]{\smash{{\SetFigFont{11}{13.2}{\rmdefault}{\mddefault}{\updefault}{\color[rgb]{0,0,0}$r=+\infty$}%
}}}}
\put(5701,-961){\makebox(0,0)[lb]{\smash{{\SetFigFont{11}{13.2}{\rmdefault}{\mddefault}{\updefault}{\color[rgb]{0,0,0}output}%
}}}}
\put(7426,-961){\makebox(0,0)[lb]{\smash{{\SetFigFont{11}{13.2}{\rmdefault}{\mddefault}{\updefault}{\color[rgb]{0,0,0}input}%
}}}}
\put(6451,164){\makebox(0,0)[lb]{\smash{{\SetFigFont{11}{13.2}{\rmdefault}{\mddefault}{\updefault}{\color[rgb]{0,0,0}input}%
}}}}
\put(6751,-436){\makebox(0,0)[lb]{\smash{{\SetFigFont{11}{13.2}{\rmdefault}{\mddefault}{\updefault}{\color[rgb]{0,0,0}output}%
}}}}
\put(6751,-736){\makebox(0,0)[lb]{\smash{{\SetFigFont{11}{13.2}{\rmdefault}{\mddefault}{\updefault}{\color[rgb]{0,0,0}input}%
}}}}
\put(5626,-61){\makebox(0,0)[lb]{\smash{{\SetFigFont{11}{13.2}{\rmdefault}{\mddefault}{\updefault}{\color[rgb]{0,0,0}$\phi^{1,1}_{L_1}$}%
}}}}
\put(5176,-736){\makebox(0,0)[lb]{\smash{{\SetFigFont{11}{13.2}{\rmdefault}{\mddefault}{\updefault}{\color[rgb]{0,0,0}$\mu^2_{L_0,L_1,L_1}$}%
}}}}
\put(2626,-2011){\makebox(0,0)[lb]{\smash{{\SetFigFont{11}{13.2}{\rmdefault}{\mddefault}{\updefault}{\color[rgb]{0,0,0}moves up, and the tangent direction}%
}}}}
\put(2626,-2236){\makebox(0,0)[lb]{\smash{{\SetFigFont{11}{13.2}{\rmdefault}{\mddefault}{\updefault}{\color[rgb]{0,0,0}rotates anticlockwise}%
}}}}
\put(2626,-1786){\makebox(0,0)[lb]{\smash{{\SetFigFont{11}{13.2}{\rmdefault}{\mddefault}{\updefault}{\color[rgb]{0,0,0}as $r$ increases, the marked point}%
}}}}
\put(3676,-436){\makebox(0,0)[lb]{\smash{{\SetFigFont{11}{13.2}{\rmdefault}{\mddefault}{\updefault}{\color[rgb]{0,0,0}$L_1$}%
}}}}
\put(3676,-1486){\makebox(0,0)[lb]{\smash{{\SetFigFont{11}{13.2}{\rmdefault}{\mddefault}{\updefault}{\color[rgb]{0,0,0}$L_0$}%
}}}}
\put(2701,-961){\makebox(0,0)[lb]{\smash{{\SetFigFont{11}{13.2}{\rmdefault}{\mddefault}{\updefault}{\color[rgb]{0,0,0}output}%
}}}}
\put(1351,-1186){\makebox(0,0)[lb]{\smash{{\SetFigFont{11}{13.2}{\rmdefault}{\mddefault}{\updefault}{\color[rgb]{0,0,0}input}%
}}}}
\put(2026,-961){\makebox(0,0)[lb]{\smash{{\SetFigFont{11}{13.2}{\rmdefault}{\mddefault}{\updefault}{\color[rgb]{0,0,0}input}%
}}}}
\put(-224,-736){\makebox(0,0)[lb]{\smash{{\SetFigFont{11}{13.2}{\rmdefault}{\mddefault}{\updefault}{\color[rgb]{0,0,0}$\mu^2_{L_0,L_0,L_1}$}%
}}}}
\put(151,-961){\makebox(0,0)[lb]{\smash{{\SetFigFont{11}{13.2}{\rmdefault}{\mddefault}{\updefault}{\color[rgb]{0,0,0}output}%
}}}}
\put(4576,-961){\makebox(0,0)[lb]{\smash{{\SetFigFont{11}{13.2}{\rmdefault}{\mddefault}{\updefault}{\color[rgb]{0,0,0}input}%
}}}}
\put(901,-2611){\makebox(0,0)[lb]{\smash{{\SetFigFont{11}{13.2}{\rmdefault}{\mddefault}{\updefault}{\color[rgb]{0,0,0}$r=-\infty$}%
}}}}
\put(1426,-2011){\makebox(0,0)[lb]{\smash{{\SetFigFont{11}{13.2}{\rmdefault}{\mddefault}{\updefault}{\color[rgb]{0,0,0}input}%
}}}}
\put(1351,-1486){\makebox(0,0)[lb]{\smash{{\SetFigFont{11}{13.2}{\rmdefault}{\mddefault}{\updefault}{\color[rgb]{0,0,0}output}%
}}}}
\put(226,-1861){\makebox(0,0)[lb]{\smash{{\SetFigFont{11}{13.2}{\rmdefault}{\mddefault}{\updefault}{\color[rgb]{0,0,0}$\phi^{1,1}_{L_0}$}%
}}}}
\end{picture}%
\caption{\label{fig:phi12}}
\end{centering}
\end{figure}%

\subsection{Cardy relations}
Next, we consider operations induced by annuli. Those for a single annulus are not particularly interesting, since they can be decomposed into products and open-closed string maps up to chain homotopy. Instead, we want to look at a one-parameter family of annuli, which leads to maps
\begin{align} \label{eq:psi}
& \psi^{0,1}_{L_0,L_1}: \mathit{CF}^1(L_1,L_1) \longrightarrow \bK, \\ 
\label{eq:dual-psi}
& \check{\psi}^{0,1}_{L_0,L_1}: \mathit{CF}^1(L_0,L_0) \longrightarrow \bK,
\end{align}
satisfying
\begin{align} \label{eq:phi-check-phi}
& \psi^{0,1}_{L_0,L_1}(\mu^1(a)) = (-1)^{n(n-1)/2} \mathrm{Str}(\mu^2_{L_0,L_1,L_1}(a,\cdot)) - (-1)^n \langle \phi^{1,0}_{L_0}, \check{\phi}^{1,1}_{L_1}(a) \rangle, \\ \label{eq:phi-check-phi-2}
& \check{\psi}^{0,1}_{L_0,L_1}(\mu^1(a)) = (-1)^{n(n-1)/2} \mathrm{Str}(\mu^2_{L_0,L_0,L_1}(\cdot,a)) - \langle \phi^{1,0}_{L_1}, \check{\phi}^{1,1}_{L_0}(a) \rangle.
\end{align}
In the last term on the right hand side of \eqref{eq:phi-check-phi}, we take $\phi^{1,0}_{L_0} \in \mathit{CF}^n(-\lambda)$ for some $\lambda \in \bR$, and pair it with the result of applying $\check{\phi}^{1,1}_{L_1}: \mathit{CF}^*(L_1,L_1) \rightarrow \mathit{CF}^{*+n}(\lambda)$. Note that even though $\lambda$ does not appear in the notation \eqref{eq:psi}, the definition of $\psi^{0,1}_{L_0,L_1}$ depends on it. The same applies to \eqref{eq:dual-psi}. 

\begin{remark}
As a heuristic check on the signs, let $a \in \mathit{CF}^0(L_1,L_1)$ be a cocycle representing the identity. Then the supertrace of multiplication is the Euler characteristic of Floer cohomology, and on the other hand, $[\check{\phi}^{1,1}_{L_k}(a)] = [\phi^{1,0}_{L_k}]$. Vanishing of the right hand side of \eqref{eq:phi-check-phi} then recovers \eqref{eq:euler}. A similar argument applies to \eqref{eq:phi-check-phi-2}.
\end{remark}

\begin{remark}
The two operations are related in the following way. Suppose that $L_0 \neq L_1$. One can then arrange that $\mathit{CF}^*(L_0,L_1)$ and $\mathit{CF}^*(L_1,L_0)$ are strictly dual, see Remark \ref{th:fail}. Moreover, one can arrange that a limited analogue of \eqref{eq:cyclic-symmetry-2} holds for the triangle product, namely
\begin{multline}
\langle a_3, \mu^2_{L_0,L_0,L_1}(a_2,a_1) \rangle = (-1)^{|a_1|(n-|a_1|)} \langle \mu^2_{L_1,L_0,L_0}(a_1,a_3), a_2 \rangle: \\
\mathit{CF}^*(L_1,L_0) \otimes \mathit{CF}^*(L_0,L_1) \otimes \mathit{CF}^*(L_0,L_0) \longrightarrow \bK.
\end{multline}
This implies that for any $a \in \mathit{CF}^*(L_0,L_0)$, 
\begin{equation}
\mathrm{Str}(\mu^2_{L_1,L_0,L_0}(a,\cdot)) = (-1)^n \mathrm{Str}(\mu^2_{L_0,L_0,L_1}(\cdot,a)).
\end{equation}
Once one has arranged that, it follows that $(-1)^n \psi^{0,1}_{L_1,L_0} - \check{\psi}^{0,1}_{L_0,L_1}$ is a chain map. One can show that it is actually nullhomotopic.
\end{remark}

$\psi^{0,1}_{L_0,L_1}$ is defined using a family $(S_r)$ of Riemann surfaces parametrized by $R = \bR$. Each $S_r$ is an annulus with the two boundary sides labeled $(L_0,L_1)$. The $L_1$ side carries a boundary puncture, and the one-form $\nu_{S_r}$ is closed. As usual in Cardy-type relations, the conformal structure of the annulus varies with $r$. As $r \rightarrow -\infty$, we have $\bar{S}_r \iso [0,l_r] \times S^1$ with $l_r \rightarrow \infty$. In the limit, we get a degeneration with two punctured disc components. We equip those components with the structures used to define $\phi^{1,0}_{L_0}$ and $\check{\phi}^{1,1}_{L_1}$, respectively. In particular, these surfaces carry nontrivial closed one-forms, and therefore $\nu_{S_r}$ is still nontrivial for $r \ll 0$. As $r \rightarrow +\infty$, $S_r$ degenerates into a disc with three boundary punctures, corresponding to $\mu^2$, and that disc is glued to itself by matching the first input to the output (see Figure \ref{fig:psi01}). The idea for $\check{\psi}^{0,1}_{L_0,L_1}$ is the same: we will not write down the details, but see Figure \ref{fig:psi01-check}.

\begin{figure}
\begin{centering}
\begin{picture}(0,0)%
\includegraphics{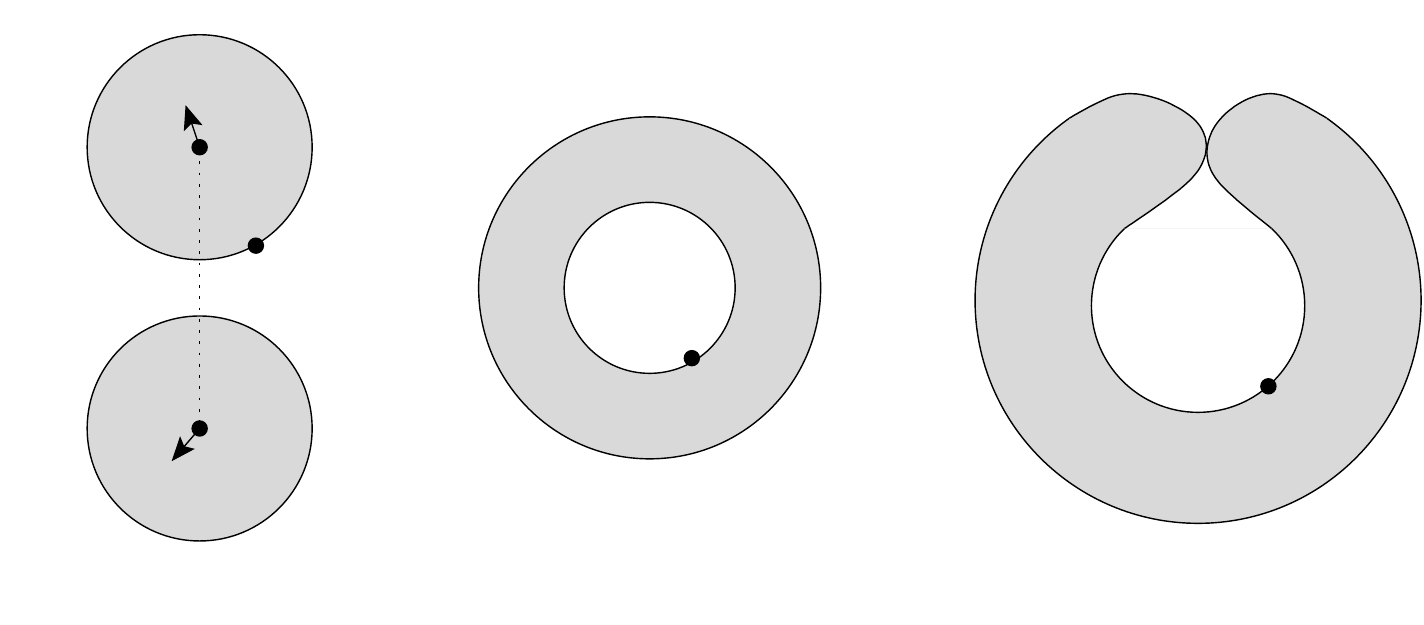}%
\end{picture}%
\setlength{\unitlength}{3552sp}%
\begingroup\makeatletter\ifx\SetFigFont\undefined%
\gdef\SetFigFont#1#2#3#4#5{%
  \reset@font\fontsize{#1}{#2pt}%
  \fontfamily{#3}\fontseries{#4}\fontshape{#5}%
  \selectfont}%
\fi\endgroup%
\begin{picture}(7588,3319)(436,-2909)
\put(4501,-2011){\makebox(0,0)[lb]{\smash{{\SetFigFont{11}{13.2}{\rmdefault}{\mddefault}{\updefault}{\color[rgb]{0,0,0}$L_0$}%
}}}}
\put(3526,-1111){\makebox(0,0)[lb]{\smash{{\SetFigFont{11}{13.2}{\rmdefault}{\mddefault}{\updefault}{\color[rgb]{0,0,0}$L_1$}%
}}}}
\put(1201,-2836){\makebox(0,0)[lb]{\smash{{\SetFigFont{11}{13.2}{\rmdefault}{\mddefault}{\updefault}{\color[rgb]{0,0,0}$r=-\infty$}%
}}}}
\put(1576,-1786){\makebox(0,0)[lb]{\smash{{\SetFigFont{11}{13.2}{\rmdefault}{\mddefault}{\updefault}{\color[rgb]{0,0,0}output}%
}}}}
\put(1576,-286){\makebox(0,0)[lb]{\smash{{\SetFigFont{11}{13.2}{\rmdefault}{\mddefault}{\updefault}{\color[rgb]{0,0,0}output}%
}}}}
\put(1876,-1036){\makebox(0,0)[lb]{\smash{{\SetFigFont{11}{13.2}{\rmdefault}{\mddefault}{\updefault}{\color[rgb]{0,0,0}input}%
}}}}
\put(4201,-1636){\makebox(0,0)[lb]{\smash{{\SetFigFont{11}{13.2}{\rmdefault}{\mddefault}{\updefault}{\color[rgb]{0,0,0}input}%
}}}}
\put(1801,239){\makebox(0,0)[lb]{\smash{{\SetFigFont{11}{13.2}{\rmdefault}{\mddefault}{\updefault}{\color[rgb]{0,0,0}$L_1$}%
}}}}
\put(1951,-1411){\makebox(0,0)[lb]{\smash{{\SetFigFont{11}{13.2}{\rmdefault}{\mddefault}{\updefault}{\color[rgb]{0,0,0}$L_0$}%
}}}}
\put(451,-2311){\makebox(0,0)[lb]{\smash{{\SetFigFont{11}{13.2}{\rmdefault}{\mddefault}{\updefault}{\color[rgb]{0,0,0}$\phi^{1,0}_{L_0}$}%
}}}}
\put(6526,-2836){\makebox(0,0)[lb]{\smash{{\SetFigFont{11}{13.2}{\rmdefault}{\mddefault}{\updefault}{\color[rgb]{0,0,0}$r=+\infty$}%
}}}}
\put(6751,-1561){\makebox(0,0)[lb]{\smash{{\SetFigFont{11}{13.2}{\rmdefault}{\mddefault}{\updefault}{\color[rgb]{0,0,0}input}%
}}}}
\put(6976,-436){\makebox(0,0)[lb]{\smash{{\SetFigFont{11}{13.2}{\rmdefault}{\mddefault}{\updefault}{\color[rgb]{0,0,0}input}%
}}}}
\put(6226,-436){\makebox(0,0)[lb]{\smash{{\SetFigFont{11}{13.2}{\rmdefault}{\mddefault}{\updefault}{\color[rgb]{0,0,0}output}%
}}}}
\put(5176,-2086){\makebox(0,0)[lb]{\smash{{\SetFigFont{11}{13.2}{\rmdefault}{\mddefault}{\updefault}{\color[rgb]{0,0,0}$\mu^2_{L_0,L_1,L_1}$}%
}}}}
\put(7126,-1186){\makebox(0,0)[lb]{\smash{{\SetFigFont{11}{13.2}{\rmdefault}{\mddefault}{\updefault}{\color[rgb]{0,0,0}$L_1$}%
}}}}
\put(7951,-1936){\makebox(0,0)[lb]{\smash{{\SetFigFont{11}{13.2}{\rmdefault}{\mddefault}{\updefault}{\color[rgb]{0,0,0}$L_0$}%
}}}}
\put(451,-811){\makebox(0,0)[lb]{\smash{{\SetFigFont{11}{13.2}{\rmdefault}{\mddefault}{\updefault}{\color[rgb]{0,0,0}$\check{\phi}^{1,1}_{L_1}$}%
}}}}
\end{picture}%
\caption{\label{fig:psi01}}
\end{centering}
\end{figure}

\begin{figure}
\begin{centering}
\begin{picture}(0,0)%
\includegraphics{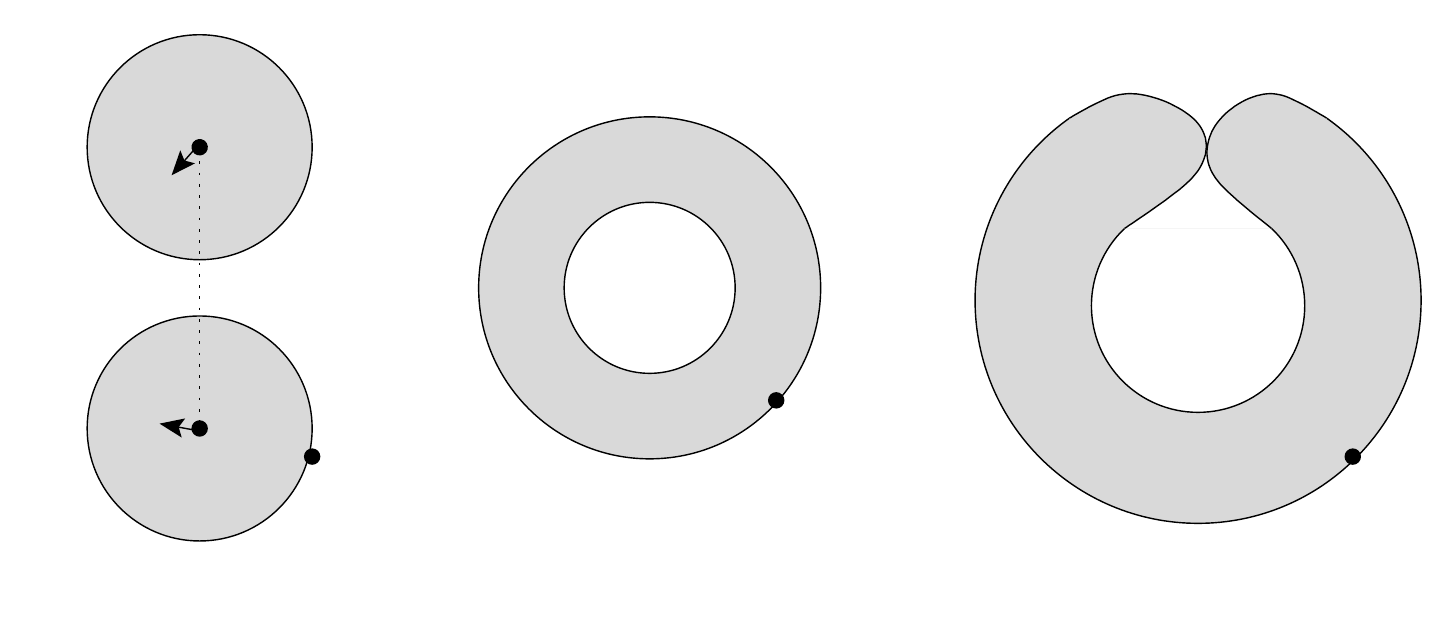}%
\end{picture}%
\setlength{\unitlength}{3552sp}%
\begingroup\makeatletter\ifx\SetFigFont\undefined%
\gdef\SetFigFont#1#2#3#4#5{%
  \reset@font\fontsize{#1}{#2pt}%
  \fontfamily{#3}\fontseries{#4}\fontshape{#5}%
  \selectfont}%
\fi\endgroup%
\begin{picture}(7614,3319)(436,-2909)
\put(3526,-1111){\makebox(0,0)[lb]{\smash{{\SetFigFont{11}{13.2}{\rmdefault}{\mddefault}{\updefault}{\color[rgb]{0,0,0}$L_1$}%
}}}}
\put(1201,-2836){\makebox(0,0)[lb]{\smash{{\SetFigFont{11}{13.2}{\rmdefault}{\mddefault}{\updefault}{\color[rgb]{0,0,0}$r=-\infty$}%
}}}}
\put(1576,-1786){\makebox(0,0)[lb]{\smash{{\SetFigFont{11}{13.2}{\rmdefault}{\mddefault}{\updefault}{\color[rgb]{0,0,0}output}%
}}}}
\put(1576,-286){\makebox(0,0)[lb]{\smash{{\SetFigFont{11}{13.2}{\rmdefault}{\mddefault}{\updefault}{\color[rgb]{0,0,0}output}%
}}}}
\put(1801,239){\makebox(0,0)[lb]{\smash{{\SetFigFont{11}{13.2}{\rmdefault}{\mddefault}{\updefault}{\color[rgb]{0,0,0}$L_1$}%
}}}}
\put(1951,-1411){\makebox(0,0)[lb]{\smash{{\SetFigFont{11}{13.2}{\rmdefault}{\mddefault}{\updefault}{\color[rgb]{0,0,0}$L_0$}%
}}}}
\put(6526,-2836){\makebox(0,0)[lb]{\smash{{\SetFigFont{11}{13.2}{\rmdefault}{\mddefault}{\updefault}{\color[rgb]{0,0,0}$r=+\infty$}%
}}}}
\put(6976,-436){\makebox(0,0)[lb]{\smash{{\SetFigFont{11}{13.2}{\rmdefault}{\mddefault}{\updefault}{\color[rgb]{0,0,0}input}%
}}}}
\put(6226,-436){\makebox(0,0)[lb]{\smash{{\SetFigFont{11}{13.2}{\rmdefault}{\mddefault}{\updefault}{\color[rgb]{0,0,0}output}%
}}}}
\put(7126,-1186){\makebox(0,0)[lb]{\smash{{\SetFigFont{11}{13.2}{\rmdefault}{\mddefault}{\updefault}{\color[rgb]{0,0,0}$L_1$}%
}}}}
\put(7951,-1936){\makebox(0,0)[lb]{\smash{{\SetFigFont{11}{13.2}{\rmdefault}{\mddefault}{\updefault}{\color[rgb]{0,0,0}$L_0$}%
}}}}
\put(2176,-2161){\makebox(0,0)[lb]{\smash{{\SetFigFont{11}{13.2}{\rmdefault}{\mddefault}{\updefault}{\color[rgb]{0,0,0}input}%
}}}}
\put(451,-2311){\makebox(0,0)[lb]{\smash{{\SetFigFont{11}{13.2}{\rmdefault}{\mddefault}{\updefault}{\color[rgb]{0,0,0}$\check{\phi}^{1,1}_{L_0}$}%
}}}}
\put(451,-811){\makebox(0,0)[lb]{\smash{{\SetFigFont{11}{13.2}{\rmdefault}{\mddefault}{\updefault}{\color[rgb]{0,0,0}$\phi^{1,0}_{L_1}$}%
}}}}
\put(5176,-2086){\makebox(0,0)[lb]{\smash{{\SetFigFont{11}{13.2}{\rmdefault}{\mddefault}{\updefault}{\color[rgb]{0,0,0}$\mu^2_{L_0,L_0,L_1}$}%
}}}}
\put(7651,-2236){\makebox(0,0)[lb]{\smash{{\SetFigFont{11}{13.2}{\rmdefault}{\mddefault}{\updefault}{\color[rgb]{0,0,0}input}%
}}}}
\put(4576,-436){\makebox(0,0)[lb]{\smash{{\SetFigFont{11}{13.2}{\rmdefault}{\mddefault}{\updefault}{\color[rgb]{0,0,0}$L_0$}%
}}}}
\put(4651,-1861){\makebox(0,0)[lb]{\smash{{\SetFigFont{11}{13.2}{\rmdefault}{\mddefault}{\updefault}{\color[rgb]{0,0,0}input}%
}}}}
\end{picture}%
\caption{\label{fig:psi01-check}}
\end{centering}
\end{figure}

\subsection{A higher relation\label{subsec:hexagon}}
We continue with $L_0,L_1$ as before. Define a map 
\begin{equation} \label{eq:h-map}
\mathit{CF}^1(2\mu) \longrightarrow \bK
\end{equation}
as the sum of the following six expressions:
\begin{equation} \label{eq:6-expressions}
\begin{aligned}
& \text{(i)} && \mathit{CF}^1(2\mu) \xrightarrow{\phi^{2,0}_{L_1}} \mathit{CF}^n(\mu) \xrightarrow{\langle \phi^{1,0}_{L_0}, \cdot \rangle} \bK, \\
& \text{(ii)} && \mathit{CF}^1(2\mu) \xrightarrow{\phi^{1,1}_{L_1}} \mathit{CF}^1(L_1,L_1) \xrightarrow{\psi^{0,1}_{L_0,L_1}} \bK, \\
& \text{(iii)} && x \longmapsto \mathrm{Str}(\phi^{1,2}_{L_0,L_1}(x,\cdot)) \quad \text{multiplied by $(-1)^{n(n-1)/2+1},$} \\
& \text{(iv)} && \mathit{CF}^1(2\mu) \xrightarrow{\phi^{1,1}_{L_0}} \mathit{CF}^1(L_0,L_0) \xrightarrow{\check{\psi}^{0,1}_{L_0,L_1}} \bK \quad \text{multiplied by $(-1)$}, \\
& \text{(v)} && \mathit{CF}^1(2\mu) \xrightarrow{\phi^{2,0}_{L_0}} \mathit{CF}^n(\mu) \xrightarrow{\langle \phi^{1,0}_{L_1},\cdot \rangle} \bK \quad \text{multiplied by $(-1)^{n+1}$,} \\ 
& \text{(vi)} && \langle \cdot, \phi^{1,0}_{L_0} \ast \phi^{1,0}_{L_1} \rangle \quad \text{multiplied by $(-1)^n$.}
\end{aligned}
\end{equation}
These map $dx$ to, respectively
\begin{equation}
\begin{aligned}
& \text{(i)} && (-1)^n \langle \phi^{1,0}_{L_0}, \check{\phi}^{1,1}_{L_1}(\phi^{1,1}_{L_1}(x)) \rangle - \langle \phi^{1,0}_{L_1} \smile \phi^{1,0}_{L_0}, x \rangle, \\
& \text{(ii)} && (-1)^{n+1} \langle \phi^{1,0}_{L_0}, \check{\phi}^{1,1}_{L_1}(\phi^{1,1}_{L_1}(x)) \rangle + (-1)^{n(n-1)/2} \mathrm{Str}(\mu^2_{L_0,L_1,L_1}(\phi^{1,1}_{L_1}(x),\cdot)), \\
& \text{(iii)} && (-1)^{n(n-1)/2} \mathrm{Str}(\mu^2_{L_0,L_0,L_1}(\cdot,\phi^{1,1}_{L_0}(x))) - (-1)^{n(n-1)/2} \mathrm{Str}(\mu^2_{L_0,L_1,L_1}(\phi^{1,1}_{L_1}(x),\cdot)), \\
& \text{(iv)} && \langle \phi^{1,0}_{L_1}, \check{\phi}^{1,1}_{L_0}(\phi^{1,1}_{L_0}(x)) \rangle - (-1)^{n(n-1)/2} 
\mathrm{Str}(\mu^2_{L_0,L_0,L_1}(\cdot,\phi^{1,1}_{L_0}(x))), \\
& \text{(v)} && (-1)^n \langle \phi^{1,0}_{L_0} \smile \phi^{1,0}_{L_1}, x \rangle - \langle \phi^{1,0}_{L_1}, \check{\phi}^{1,1}_{L_0}(\phi^{1,1}_{L_0}(x)) \rangle, \\
& \text{(vi)} && \langle x, \phi^{1,0}_{L_1} \smile \phi^{1,0}_{L_0} \rangle + (-1)^{n+1} \langle x, \phi^{1,0}_{L_0} \smile \phi^{1,0}_{L_1} \rangle.
\end{aligned}
\end{equation}
Hence their sum maps $dx$ to zero.

\begin{proposition} \label{th:hexagon}
\eqref{eq:h-map} is nullhomotopic.
\end{proposition}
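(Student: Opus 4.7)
My plan is to realize \eqref{eq:h-map} as the codimension-one boundary contribution of a parameterized moduli problem whose parameter space $\bar{R}$ is a two-dimensional compact manifold with corners (topologically, a hexagon), each of whose six edges corresponds to one of the six terms in \eqref{eq:6-expressions}. The Riemann surfaces $S_r$ in the family are all copies of an annulus with boundary components labeled $L_0$ and $L_1$ and one interior input puncture of weight $2\mu$, equipped with a tangent direction at that puncture and a closed one-form in the sense of Setup \ref{th:setup-2}. As in the general TCFT-style formalism developed above, the operation associated to such a two-dimensional family is a cochain map of degree $-1$ whose coboundary equals the sum of the contributions of the six edges; that sum is designed to equal \eqref{eq:h-map}, so the operation provides the desired nullhomotopy.

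The first task is to match the six codimension-one degenerations of the family with the six terms. Two are \emph{modulus degenerations}: as the modulus of the annulus tends to infinity, the annulus pinches along a cross-section into two half-cylinders, and the single interior input lies in one or the other half-cylinder. This gives term (i) (puncture on the $L_1$-side, with the $L_0$-side capped by $\phi^{1,0}_{L_0}$, leaving a $\phi^{2,0}_{L_1}$ configuration) or term (v) (opposite roles). Two are \emph{boundary-approach degenerations}: the interior puncture migrates onto one of the boundary components, bubbling off a disc with one interior and one boundary puncture (a copy of $\phi^{1,1}_{L_k}$) and leaving behind an annulus with a boundary input that enters either $\psi^{0,1}_{L_0,L_1}$ (term (ii)) or $\check{\psi}^{0,1}_{L_0,L_1}$ (term (iv)). One is a \emph{tangent-rotation degeneration}: on a locus where the annulus has already pinched as in (i)/(v) but where the tangent direction at the interior puncture performs a half-turn, one recovers the $\ast$-operation of \eqref{eq:star-product} applied to $(\phi^{1,0}_{L_0},\phi^{1,0}_{L_1})$, giving (vi). The last is a \emph{thin-annulus (Cardy) degeneration}: the annulus collapses onto a disc with two boundary punctures whose punctures are then identified, giving a copy of $\phi^{1,2}_{L_0,L_1}$ composed with the trace over the diagonal of $\mathit{CF}^*(L_0,L_1)$, which is term (iii).

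Next I would coordinate the auxiliary Floer data (one-forms $\nu_{S_r}$, inhomogeneous terms $K_{S_r}$, almost complex structures $J_{S_r}$) so that over each of these six edges the limiting family agrees with the data used in the definitions of $\phi^{1,0}$, $\phi^{1,1}$, $\check{\phi}^{1,1}$, $\phi^{2,0}$, $\phi^{1,2}$, $\psi^{0,1}$, $\check{\psi}^{0,1}$, $\smile$, and $\ast$. Consistency at the six corners of $\bar{R}$, where two adjacent edges meet and two of the phenomena above occur simultaneously, reduces to standard gluing compatibilities among the constituent operations. The signs appearing in \eqref{eq:6-expressions} are then the orientation signs of the six edges, read off from the general degree formula $n(-\chi(\bar{S}) + 2|\Sigma^{\mathit{cl,out}}| + |\Sigma^{\mathit{op,out}}|)$ together with the Koszul signs governing gluing of oriented moduli spaces; confirming them is a bookkeeping exercise.

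The hard part is twofold. Stratum (iii) requires chain-level care because of the asymmetry noted in Remarks \ref{th:fail} and \ref{th:fail-3}: chain-level Poincar\'e duality on $\mathit{CF}^*(L,L)$ is not strict, so one must realize $\mathrm{Str}(\phi^{1,2}_{L_0,L_1}(x,\cdot))$ directly as a gluing of a Floer strip to itself rather than through dualization, which is what pins down the sign $(-1)^{n(n-1)/2+1}$ and, incidentally, forces $\psi^{0,1}_{L_0,L_1}$ and $\check{\psi}^{0,1}_{L_0,L_1}$ to appear as distinct operations in strata (ii) and (iv). Stratum (vi) is the other delicate point: as in Remark \ref{th:caution}, the precise homotopy class of tangent-direction rotation along the relevant edge of $\bar{R}$ must be fixed unambiguously in order to reproduce $\ast$ rather than a chain-homotopic but numerically different operation, and this forces a careful global choice of tangent-direction behaviour over all of $\bar{R}$.
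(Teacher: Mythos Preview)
Your approach matches the paper's: the parameter space is a hexagon whose six edges correspond to the six terms, and the Riemann surfaces are annuli with one interior input puncture. There is, however, one technical point you gloss over which the paper makes explicit in Section~\ref{subsec:ksv}. The KSV (equivalently, real Deligne--Mumford) compactification $\bar\scrR$ of the moduli of annuli with one interior marked point is naturally a \emph{pentagon}, with the five codimension-one strata being your (i)--(v). Your side (vi) is not a codimension-one stratum of $\bar\scrR$: the simultaneous degeneration into three components (a three-punctured sphere plus two once-punctured discs) is the single \emph{corner point} where edges (i) and (v) meet. The hexagon $\hat\scrR$ arises as the \emph{real blowup} of $\bar\scrR$ at that corner; the exceptional interval is the new edge (vi). Along (vi) the broken three-component surface is fixed while the residual angular gluing parameter varies, and it is precisely this parameter that becomes the $[0,1]$-parameter of the $\ast$-family. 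So your description of (vi) as ``a locus where the annulus has already pinched as in (i)/(v)'' should be sharpened: it is the locus where \emph{both} pinchings have occurred, and the edge coordinate is the blowup's angular datum, not a further geometric degeneration.

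Once the blowup is in place, your remaining outline---including the global rotation-number bookkeeping needed to extend the tangent-direction choice over the interior (the paper notes the net rotation is $+\pi$ along (vi) and $-\pi$ along (iii), summing to zero), and the care required for the self-gluing sign $(-1)^{n(n-1)/2}$ in (iii)---is in line with the paper.
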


Unsurprisingly, the proof of this involves constructing a two-parameter family of Riemann surfaces, which are annuli with one interior puncture (rather than a boundary puncture as in $\psi^{0,1}$, $\check{\psi}^{0,1}$). The compactified parameter space is a hexagon, whose sides correspond to the six expressions above. We have represented the degenerations associated to the six sides graphically in Figure \ref{fig:hex-sides}, and those that happen at the corners in Figure \ref{fig:hex-corners}. In principle, the construction of such a family is not hard: suppose that we go around the boundary of the hexagon and glue together all the components in Figure \ref{fig:hex-sides}. This yields a family of annuli over with an interior puncture and tangent direction at that puncture. Identify each such annulus topologically with $S^1 \times [0,1]$, so that the interior puncture is at $(0,1/2)$ (this identification is unique up to isotopy). Then, as we go around the boundary of the hexagon, the total rotation number of the tangent direction is zero (we have a rotation by $\pi$ in (vi) and by $-\pi$ in (iii), because for the latter the boundary orientation is opposite to the one chosen for the parameter space of $\phi^{1,2}$). This allows one to fill in the family over the interior of the hexagon. There are a few noteworthy points, in particular concerning the boundary side (vi), which unlike all others consists of surfaces with three components. We postpone further discussion of this to Section \ref{subsec:ksv}.

\begin{figure}
\begin{centering}
\begin{picture}(0,0)%
\includegraphics{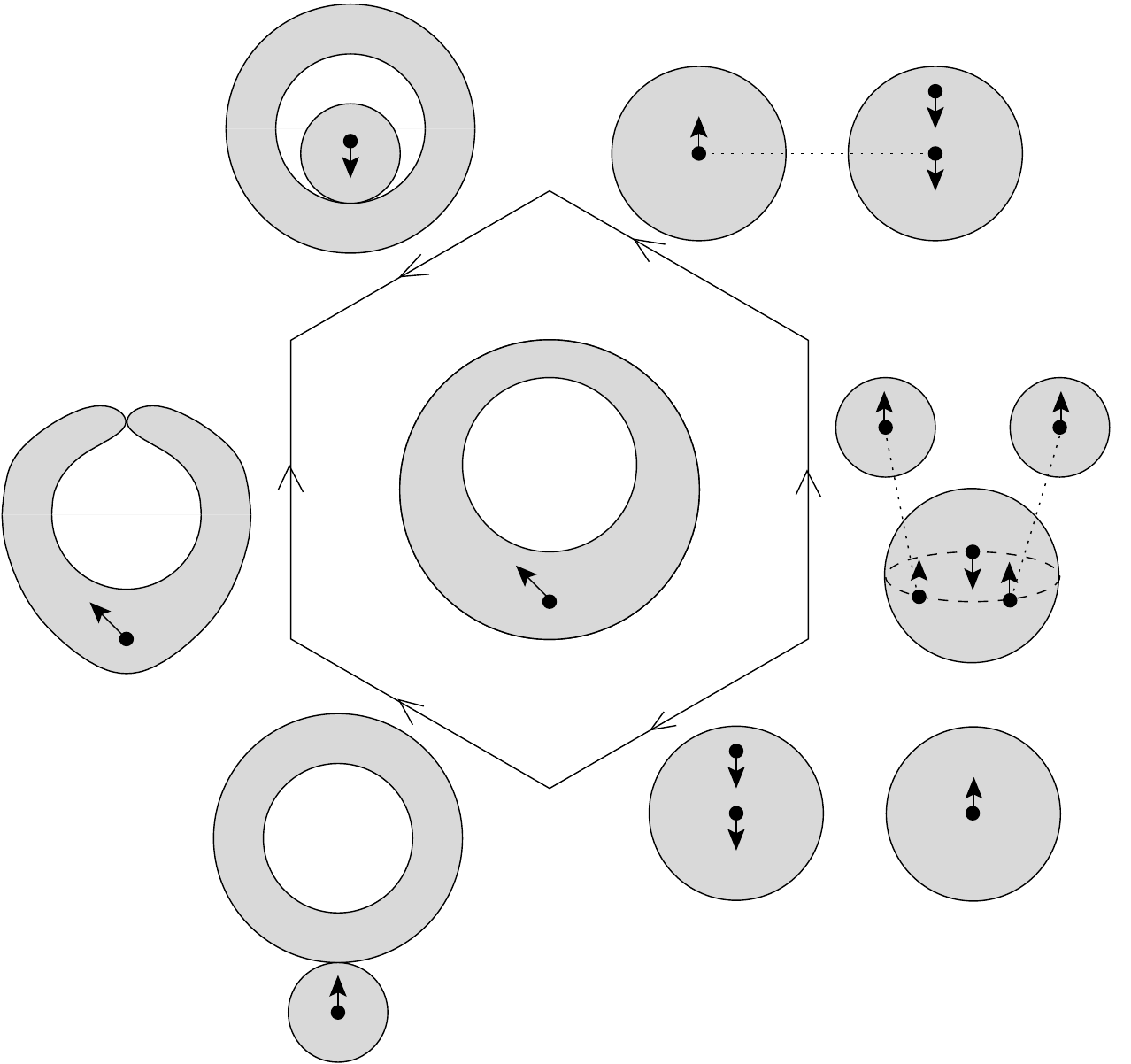}%
\end{picture}%
\setlength{\unitlength}{3552sp}%
\begingroup\makeatletter\ifx\SetFigFont\undefined%
\gdef\SetFigFont#1#2#3#4#5{%
  \reset@font\fontsize{#1}{#2pt}%
  \fontfamily{#3}\fontseries{#4}\fontshape{#5}%
  \selectfont}%
\fi\endgroup%
\begin{picture}(6814,6391)(664,-6368)
\put(2551,-2986){\makebox(0,0)[lb]{\smash{{\SetFigFont{11}{13.2}{\rmdefault}{\mddefault}{\updefault}{\color[rgb]{0,0,0}(iii)}%
}}}}
\put(3226,-5761){\makebox(0,0)[lb]{\smash{{\SetFigFont{11}{13.2}{\rmdefault}{\mddefault}{\updefault}{\color[rgb]{0,0,0}$L_0$}%
}}}}
\put(2551,-5386){\makebox(0,0)[lb]{\smash{{\SetFigFont{11}{13.2}{\rmdefault}{\mddefault}{\updefault}{\color[rgb]{0,0,0}$L_1$}%
}}}}
\put(1313,-3399){\makebox(0,0)[lb]{\smash{{\SetFigFont{11}{13.2}{\rmdefault}{\mddefault}{\updefault}{\color[rgb]{0,0,0}$L_1$}%
}}}}
\put(1801,-2386){\makebox(0,0)[lb]{\smash{{\SetFigFont{11}{13.2}{\rmdefault}{\mddefault}{\updefault}{\color[rgb]{0,0,0}$L_0$}%
}}}}
\put(2551,-511){\makebox(0,0)[lb]{\smash{{\SetFigFont{11}{13.2}{\rmdefault}{\mddefault}{\updefault}{\color[rgb]{0,0,0}$L_1$}%
}}}}
\put(3376,-211){\makebox(0,0)[lb]{\smash{{\SetFigFont{11}{13.2}{\rmdefault}{\mddefault}{\updefault}{\color[rgb]{0,0,0}$L_0$}%
}}}}
\put(5754,-4411){\makebox(0,0)[rb]{\smash{{\SetFigFont{11}{13.2}{\rmdefault}{\mddefault}{\updefault}{\color[rgb]{0,0,0}$L_0$}%
}}}}
\put(7088,-5484){\makebox(0,0)[rb]{\smash{{\SetFigFont{11}{13.2}{\rmdefault}{\mddefault}{\updefault}{\color[rgb]{0,0,0}$L_1$}%
}}}}
\put(5176,-2986){\makebox(0,0)[lb]{\smash{{\SetFigFont{11}{13.2}{\rmdefault}{\mddefault}{\updefault}{\color[rgb]{0,0,0}(vi)}%
}}}}
\put(3076,-4111){\makebox(0,0)[lb]{\smash{{\SetFigFont{11}{13.2}{\rmdefault}{\mddefault}{\updefault}{\color[rgb]{0,0,0}(iv)}%
}}}}
\put(4651,-4111){\makebox(0,0)[lb]{\smash{{\SetFigFont{11}{13.2}{\rmdefault}{\mddefault}{\updefault}{\color[rgb]{0,0,0}(v)}%
}}}}
\put(5101,-361){\makebox(0,0)[lb]{\smash{{\SetFigFont{11}{13.2}{\rmdefault}{\mddefault}{\updefault}{\color[rgb]{0,0,0}$L_0$}%
}}}}
\put(6601,-361){\makebox(0,0)[lb]{\smash{{\SetFigFont{11}{13.2}{\rmdefault}{\mddefault}{\updefault}{\color[rgb]{0,0,0}$L_1$}%
}}}}
\put(3076,-1861){\makebox(0,0)[lb]{\smash{{\SetFigFont{11}{13.2}{\rmdefault}{\mddefault}{\updefault}{\color[rgb]{0,0,0}(ii)}%
}}}}
\put(4726,-1861){\makebox(0,0)[lb]{\smash{{\SetFigFont{11}{13.2}{\rmdefault}{\mddefault}{\updefault}{\color[rgb]{0,0,0}(i)}%
}}}}
\put(6399,-2169){\makebox(0,0)[rb]{\smash{{\SetFigFont{11}{13.2}{\rmdefault}{\mddefault}{\updefault}{\color[rgb]{0,0,0}$L_1$}%
}}}}
\put(7463,-2169){\makebox(0,0)[rb]{\smash{{\SetFigFont{11}{13.2}{\rmdefault}{\mddefault}{\updefault}{\color[rgb]{0,0,0}$L_0$}%
}}}}
\end{picture}%

\caption{\label{fig:hex-sides}}
\end{centering}
\end{figure}%
\begin{figure}
\begin{centering}
\begin{picture}(0,0)%
\includegraphics{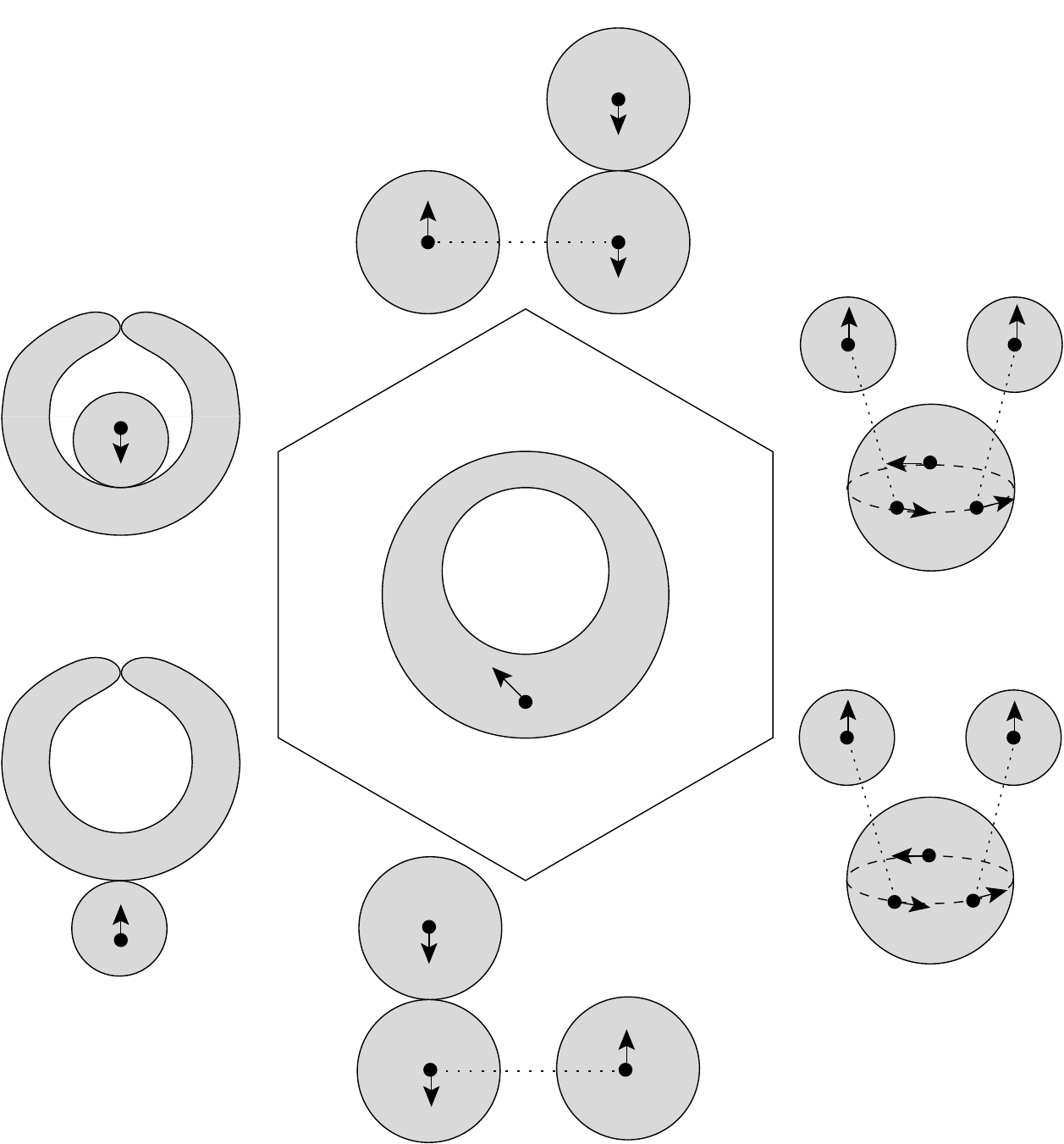}%
\end{picture}%
\setlength{\unitlength}{3552sp}%
\begingroup\makeatletter\ifx\SetFigFont\undefined%
\gdef\SetFigFont#1#2#3#4#5{%
  \reset@font\fontsize{#1}{#2pt}%
  \fontfamily{#3}\fontseries{#4}\fontshape{#5}%
  \selectfont}%
\fi\endgroup%
\begin{picture}(6702,7198)(1354,-6638)
\put(6548,-1231){\makebox(0,0)[lb]{\smash{{\SetFigFont{11}{13.2}{\rmdefault}{\mddefault}{\updefault}{\color[rgb]{0,0,0}$L_1$}%
}}}}
\put(7673,-1231){\makebox(0,0)[lb]{\smash{{\SetFigFont{11}{13.2}{\rmdefault}{\mddefault}{\updefault}{\color[rgb]{0,0,0}$L_0$}%
}}}}
\put(5543,-5671){\makebox(0,0)[lb]{\smash{{\SetFigFont{11}{13.2}{\rmdefault}{\mddefault}{\updefault}{\color[rgb]{0,0,0}$L_1$}%
}}}}
\put(7666,-3706){\makebox(0,0)[lb]{\smash{{\SetFigFont{11}{13.2}{\rmdefault}{\mddefault}{\updefault}{\color[rgb]{0,0,0}$L_1$}%
}}}}
\put(6541,-3706){\makebox(0,0)[lb]{\smash{{\SetFigFont{11}{13.2}{\rmdefault}{\mddefault}{\updefault}{\color[rgb]{0,0,0}$L_0$}%
}}}}
\put(2491,-3556){\makebox(0,0)[lb]{\smash{{\SetFigFont{11}{13.2}{\rmdefault}{\mddefault}{\updefault}{\color[rgb]{0,0,0}$L_0$}%
}}}}
\put(1966,-4569){\makebox(0,0)[lb]{\smash{{\SetFigFont{11}{13.2}{\rmdefault}{\mddefault}{\updefault}{\color[rgb]{0,0,0}$L_1$}%
}}}}
\put(2716,-1531){\makebox(0,0)[lb]{\smash{{\SetFigFont{11}{13.2}{\rmdefault}{\mddefault}{\updefault}{\color[rgb]{0,0,0}$L_0$}%
}}}}
\put(1891,-1831){\makebox(0,0)[lb]{\smash{{\SetFigFont{11}{13.2}{\rmdefault}{\mddefault}{\updefault}{\color[rgb]{0,0,0}$L_1$}%
}}}}
\put(3376,-661){\makebox(0,0)[lb]{\smash{{\SetFigFont{11}{13.2}{\rmdefault}{\mddefault}{\updefault}{\color[rgb]{0,0,0}$L_0$}%
}}}}
\put(5401,389){\makebox(0,0)[lb]{\smash{{\SetFigFont{11}{13.2}{\rmdefault}{\mddefault}{\updefault}{\color[rgb]{0,0,0}$L_1$}%
}}}}
\put(4613,-5288){\makebox(0,0)[lb]{\smash{{\SetFigFont{11}{13.2}{\rmdefault}{\mddefault}{\updefault}{\color[rgb]{0,0,0}$L_0$}%
}}}}
\end{picture}%
\caption{\label{fig:hex-corners}}
\end{centering}
\end{figure}%

\begin{remark}
The observation made in Remark \ref{th:caution} applies here as well. For instance, suppose that we replace part (vi) of \eqref{eq:6-expressions} with the a priori equally plausible 
\begin{equation} \label{eq:plausible-but-wrong}
-\langle \cdot, \phi^{1,0}_{L_1} \ast \phi^{1,0}_{L_0} \rangle. 
\end{equation}
The resulting version of \eqref{eq:h-map} would still be a chain map, but the analogue of Proposition \ref{th:hexagon} fails. Instead, the map $\mathit{CF}^1(2\mu) \rightarrow \bK$ defined in this way would be chain homotopic to 
\begin{equation}
\pm \langle [\phi^{1,0}_{L_0},\phi^{1,0}_{L_1}], \cdot \rangle \htp \pm \langle \phi^{1,0}_{L_0} \smile \phi^{1,0}_{L_1}, 
\delta(\cdot) \rangle.
\end{equation}
In particular, if we plug in a cocycle representing a dilation, the outcome would be the ordinary intersection number $L_0 \cdot L_1$. Geometrically, choosing \eqref{eq:plausible-but-wrong} corresponds to taking a family of surfaces over the boundary of the hexagon for which the preferred tangent directions rotate by a nonzero degree, hence which can't be extended over the interior of the hexagon.
\end{remark}

\subsection{Defining the pairing}
We now have all the ingredients necessary to flesh out the discussion from Section \ref{sec:strategy}. We choose $B \in \mathit{HF}^1(2\mu)$ and a representing cocycle $\beta \in \mathit{CF}^1(2\mu)$. Define a chain complex
\begin{equation} \label{eq:tilde-c}
\begin{aligned}
& \tilde{C}^* = \mathit{CF}^*(-\mu) \oplus \mathit{CF}^*(\mu), \\
& \tilde{d}(\xi,x) = \big(d\xi, dx - \beta \smile \xi \big).
\end{aligned}
\end{equation}
The cohomology of \eqref{eq:tilde-c} is the graded vector space previously denoted by $\tilde{H}^*$. The long exact sequence \eqref{eq:cone-les} is obvious from the definition. Next we introduce a pairing
\begin{equation} \label{eq:iota-pairing}
\begin{aligned}
& \iota : \tilde{C}^* \otimes \tilde{C}^{2n-*} \longrightarrow \bK, \\
& \iota((\xi_0,x_0), (\xi_1,x_1)) = \langle x_0, \xi_1 \rangle - (-1)^{|\xi_0|}\langle x_1, \xi_0 \rangle + \langle \beta, \xi_0 \ast \xi_1 \rangle.
\end{aligned}
\end{equation}
Let's show that this is a chain map: for any $(\xi_1,x_1)$ and $(\xi_0,x_0)$ whose degrees add up to $2n-1$,
\begin{equation}
\begin{aligned}
& \iota((d\xi_0,dx_0 - \beta \smile \xi_0),(\xi_1,x_1)) + (-1)^{|\xi_0|} \iota((\xi_0,x_0),(d\xi_1,dx_1 - \beta \smile \xi_1)) \\ 
& = \langle dx_0, \xi_1 \rangle + (-1)^{|\xi_0|} \langle x_0, d\xi_1 \rangle - (-1)^{|\xi_1|}\langle x_1, d\xi_0 \rangle -
\langle dx_1, \xi_0\rangle \\
& \qquad - \langle \beta \smile \xi_0, \xi_1 \rangle + \langle \beta \smile \xi_1, \xi_0 \rangle + \langle \beta, d\xi_0 \ast \xi_1 + (-1)^{|\xi_0|} \xi_0 \ast d\xi_1 \rangle \\
& = \langle \beta, -\xi_0 \smile \xi_1 + \xi_1 \smile \xi_0 + d\xi_0 \ast \xi_1 + (-1)^{|\xi_0|} \xi_0 \ast d\xi_1 \rangle \\
& = \langle \beta, -d(\xi_0 \ast \xi_1) \rangle = 0.
\end{aligned}
\end{equation}
The induced cohomology pairing is \eqref{eq:i-pairing}, and we will now establish its basic properties as stated there.

\begin{proof}[Proof of Lemma \ref{th:pairing-1}] The assumption means that $\tilde{x}_1$ can be represented by a cochain of the form $(0,x_1)$, in which case indeed $\iota((\xi_0,x_0),(0,x_1)) = -(-1)^{|\xi_0|} \langle x_1,\xi_0 \rangle = -\langle \xi_0, x_1 \rangle$.
\end{proof}

\begin{proof}[Proof of Lemma \ref{th:pairing-2}] This is clear from the definition:
\begin{equation} \label{eq:unsymmetry-1}
\begin{aligned}
\iota((\xi_0,x_0),(\xi_1,x_1)) + (-1)^{|\xi_0|} \iota((\xi_1,x_1),(\xi_0,x_0)) & = \langle \beta, \xi_0 \ast \xi_1 + (-1)^{|\xi_0|} \xi_1 \ast \xi_0 \rangle \\ & = \langle \beta, [\xi_0,\xi_1] \rangle,
\end{aligned}
\end{equation}
where the second equality holds by our definition of the bracket \eqref{eq:lie}.
\end{proof}

Next, we recall the precise definition of $B$-equivariant Lagrangian submanifold from \cite[Definition 4.2]{seidel-solomon10}. This is a pair $\tilde{L} = (L,\gamma_L)$ consisting of a Lagrangian submanifold $L$ (as usual, with the conditions from Setup \ref{th:setup-lagrangian}) together with an element $\gamma_L \in \mathit{CF}^0(L,L)$ satisfying 
\begin{equation} \label{eq:gamma-cobounds}
\mu^1(\gamma_L) = \phi^{1,1}_L(\beta). 
\end{equation}
Two $B$-equivariant structures on a fixed $L$ are considered to be equivalent if the $\gamma_L$ differ by a coboundary. We can associate to each $B$-equivariant Lagrangian submanifold a cocycle
\begin{equation} \label{eq:equivariant-cocycle}
(\xi,x) = \big(\phi^{1,0}_L, (-1)^{n+1} \phi^{2,0}_L(\beta) + \check{\phi}^{1,1}_L(\gamma_L)\big) \in \tilde{C}^n.
\end{equation}
To see that this is closed under the differential \eqref{eq:tilde-c}, one uses: that $\phi^{1,0}_L$ is a cocycle; the basic property \eqref{eq:phi-phi} of $\phi^{2,0}_L$; that $\beta$ is a cocycle; that $\check{\phi}^{1,1}_L$ is a chain map of degree $n$; and finally \eqref{eq:gamma-cobounds}. The cohomology class of \eqref{eq:equivariant-cocycle} is the previously introduced \eqref{eq:improved-class2}. As stated there, the image of $\lbr \tilde{L} \rbr$ under the map $\tilde{H}^n \rightarrow \mathit{HF}^n(-\mu)$ is indeed $\lbr L \rbr = [\phi^{1,0}_L]$. 

\begin{remark}
Suppose that we change $[\gamma_L]$ by a multiple of the identity class in $\mathit{HF}^0(L,L)$. Then \eqref{eq:equivariant-cocycle} changes by a cocycle homologous to the corresponding multiple of $(0,\phi^{1,0}_L)$. On the cohomology level, this means that $\lbr \tilde{L} \rbr$ changes by a multiple of the image of the standard fundamental class $[L]$ under $H^n_{\mathit{cpt}}(M;\bK) \rightarrow H^n(M;\bK) \rightarrow \mathit{HF}^n(\mu)$.
\end{remark}

Given two $B$-equivariant Lagrangian submanifolds, we have a chain map \cite[Equation (4.4)]{seidel-solomon10}
\begin{equation} \label{eq:tilde-phi-map}
\begin{aligned}
& \phi_{\tilde{L}_0,\tilde{L}_1}: \mathit{CF}^*(L_0,L_1) \longrightarrow \mathit{CF}^*(L_0,L_1), \\
& \phi_{\tilde{L}_0,\tilde{L}_1}(a) = \phi^{1,2}_{L_0,L_1}(\beta,a) - \mu^2_{L_0,L_1,L_1}(\gamma_{L_1},a) +
\mu^2_{L_0,L_0,L_1}(a,\gamma_{L_0}).
\end{aligned}
\end{equation}
The induced map on cohomology is the endomorphism $\Phi_{\tilde{L}_0,\tilde{L}_1}$ from \eqref{eq:tilde-phi}. 

\begin{proof}[Proof of Theorem \ref{th:cardy2}]
From the definition \eqref{eq:tilde-phi-map} and \eqref{eq:phi-check-phi}, \eqref{eq:phi-check-phi-2}, we get
\begin{equation} \label{eq:str-main}
\begin{aligned}
& (-1)^{n(n+1)/2} \mathrm{Str}(\phi_{\tilde{L}_0,\tilde{L}_1}) \\
& = (-1)^{n(n+1)/2} \Big( \mathrm{Str}(\phi^{1,2}_{L_0,L_1}(\beta,\cdot)) 
- \mathrm{Str}(\mu^2_{L_0,L_1,L_1}(\gamma_{L_1},\cdot)) + \mathrm{Str}(\mu^2_{L_0,L_0,L_1}(\cdot,\gamma_{L_0})) \Big) \\
& = (-1)^{n(n+1)/2} \mathrm{Str}(\phi^{1,2}_{L_0,L_1}(\beta,\cdot)) - (-1)^n \psi^{0,1}_{L_0,L_1}(\phi^{1,1}_{L_1}(\beta)) - \langle \phi^{1,0}_{L_0}, \check{\phi}^{1,1}_{L_1}(\gamma_{L_1}) \rangle \\
& \qquad \qquad + (-1)^n \check{\psi}^{0,1}_{L_0,L_1}(\phi^{1,1}_{L_0}(\beta)) + (-1)^n \langle \phi^{1,0}_{L_1}, \check{\phi}^{1,1}_{L_0}(\gamma_{L_0}) \rangle.
\end{aligned}
\end{equation}
We know from Proposition \ref{th:hexagon} that the image of $\beta$ under \eqref{eq:h-map} is zero, which means that
\begin{multline}
(-1)^{n(n+1)/2} \mathrm{Str}(\phi^{1,2}_{L_0,L_1}(\beta,\cdot)) - (-1)^n \psi^{0,1}_{L_0,L_1}(\phi^{1,1}_{L_1}(\beta)) + (-1)^{n} \check{\psi}^{0,1}_{L_0,L_1}(\phi^{1,1}_{L_0}(\beta)) \\ =
-\langle \phi^{1,0}_{L_1}, \phi^{2,0}_{L_0}(\beta) \rangle + \langle \beta, \phi^{1,0}_{L_0} \ast \phi^{1,0}_{L_1} \rangle + (-1)^n \langle \phi^{1,0}_{L_0}, \phi^{2,0}_{L_1}(\beta) \rangle.
\end{multline}
With that in mind, one rewrites \eqref{eq:str-main} as
\begin{equation}
\begin{aligned}
(-1)^{n(n+1)/2} \mathrm{Str}(\phi_{\tilde{L}_0,\tilde{L}_1}) & = 
  \langle (-1)^{n+1} \phi^{2,0}_{L_0}(\beta) + \check{\phi}^{1,1}_{L_0}(\gamma_{L_0}), \phi^{1,0}_{L_1} \rangle \\ & \qquad
- (-1)^n \langle (-1)^{n+1} \phi^{2,0}_{L_1}(\beta) + \check{\phi}^{1,1}_{L_1}(\gamma_{L_1}), \phi^{1,0}_{L_0} \rangle
+ \langle \beta, \phi^{1,0}_{L_0} \ast \phi^{1,0}_{L_1} \rangle.
\end{aligned}
\end{equation}
The right hand side is exactly the result of applying \eqref{eq:iota-pairing} to the cocycles \eqref{eq:equivariant-cocycle}.
\end{proof}

\section{Selected technical aspects\label{sec:technical}}


\subsection{Pseudo-holomorphic map equations}
We want to describe briefly how the Floer-theoretic apparatus from Section \ref{sec:dilations} should be extended in order to cover the operations introduced in Section \ref{sec:operations}. There is nothing particularly original about this. Besides the classical references for operations in the Hamiltonian \cite{schwarz95,piunikhin-salamon-schwarz94, seidel07, ritter10} and Lagrangian \cite{fukaya93,desilva98} flavours of Floer theory, there is now a considerable amount of literature concerning the combination of the two \cite{seidel02, albers08, biran-cornea09, abouzaid-seidel07, abouzaid10,ritter-smith12}. Among the last-mentioned group, \cite{abouzaid10} is particularly close to our concerns. 

To begin, let's slightly rigidify the class of Riemann surfaces under consideration (this will not make any essential difference, since all the families which we have considered previously can be adapted without any issues to this framework; indeed, there is a general fact ensuring that this can be done, which however would take too long to formulate properly).

\begin{setup} \label{th:setup-3}
Take a Riemann surface as in Setup \ref{th:setup-2}. We want to make additional choices of distinguished coordinates near the punctures. For the interior punctures, these choices are tubular ends
\begin{equation} \label{eq:tubular-ends}
\left\{
\begin{aligned}
& \epsilon_\zeta: (-\infty,0] \times S^1 \longrightarrow S, && \zeta \in \Sigma^{\mathit{cl,out}}, \\
& \epsilon_\zeta: [0,\infty) \times S^1 \longrightarrow S, && \zeta \in \Sigma^{\mathit{cl,in}}.
\end{aligned}
\right.
\end{equation}
More precisely, the $\epsilon_\zeta$ are proper holomorphic embeddings with $\lim_{s \rightarrow \pm \infty} \epsilon_\zeta(s,\cdot) = \zeta$, chosen in such a way that the distinguished tangent direction points along the arc $\{\epsilon_\zeta(s,0)\}$. We introduce a slightly stricter version of \eqref{eq:one-form}, requiring that near infinity on each tubular end,
\begin{equation} \label{eq:nu-over-the-ends}
\epsilon_\zeta^*\nu_S = \lambda_\zeta\, \mathit{dt}.
\end{equation}
The analogue for the boundary punctures are strip-like ends
\begin{equation}
\left\{
\begin{aligned}
& \epsilon_\zeta: (-\infty,0] \times [0,1] \longrightarrow S, && \zeta \in \Sigma^{\mathit{op,out}}, \\
& \epsilon_\zeta: [0,\infty) \times [0,1] \longrightarrow S, && \zeta \in \Sigma^{\mathit{op,in}},
\end{aligned}
\right.
\end{equation}
and we again require that \eqref{eq:nu-over-the-ends} should hold.
\end{setup}

Additional data are now chosen as in the construction of continuation maps \eqref{eq:continuation-map}. This means first of all, a section $K_S$ of the pullback bundle $T^*S \rightarrow S \times M$. It should satisfy \eqref{eq:k-infty}, and reduce to $H_{\lambda_\zeta,t} \mathit{dt}$ on each tubular end, and to $H_{L_{\zeta,0},L_{\zeta,1},t} \mathit{dt}$ on each strip-like end. Additionally, we require that if $\xi$ is tangent to some component $C \subset \partial S$, then $K_S(\xi)|L_C = 0$; this is the generalization of \eqref{eq:boundary-condition-k}. The second piece of data is a family $J_S$ of compatible almost complex structures parametrized by $S$, again satisfying the analogue of \eqref{eq:j-infty}, and reducing to $J_{\lambda_\zeta,t}$ respectively to $J_{L_{\zeta,0},L_{\zeta,1},t}$ on the ends. By using the section $Y_S$ of $\mathit{Hom}(TS,TM) \rightarrow S \times M$ associated to $K_S$, one writes down the appropriate generalization of \eqref{eq:cont-equation}:
\begin{equation} \label{eq:generalized-floer}
\left\{ 
\begin{aligned}
& u: S \longrightarrow M, \\
& u(C) \subset L_C \quad \text{for each component $C \subset \partial S$}, \\
& (du - Y_{S,z})^{0,1} = 0, \\
& \textstyle \lim_{s \rightarrow \pm\infty} u(\epsilon_\zeta(s,t)) = x_\zeta(t).
\end{aligned}
\right.
\end{equation}
The limits $x_\zeta$ are appropriate trajectories (either $1$-periodic orbits or chords). Exactness guarantees an a priori bound on the energy, and \eqref{eq:convexity} provides the necessary $C^0$-bound (showing that solutions $u$ cannot escape to infinity in $M$). Moreover, for a generic choice of $(J_S,K_S)$ (in fact, even a generic choice of $K_S$ with arbitrary fixed $J_S$), the moduli space of solutions of \eqref{eq:generalized-floer} is regular.

For a single surface $S$, this is all one needs: counting points in the zero-dimensional moduli spaces of solutions of \eqref{eq:generalized-floer} defines the chain map \eqref{eq:operation}. In the case of a family $(S_r)_{r \in R}$ with compact parameter space $R$ is similar, one proceeds in the same way, but where all the auxiliary structures are chosen to be smooth in $R$. Of course, in applications where $R$ has boundary, the structures associated to points $r \in \partial R$ are usually related to previously defined operations, but that does not interfere with regularity.

\begin{example}
In the definition of \eqref{eq:star-product}, the Riemann surface $S_r = S$ itself is the same for all $r$, but the tubular ends rotate in dependence of the parameter value $r \in R = [0,1]$, and hence $(K_{S_r},J_{S_r})$ must vary with $r$. To obtain the required equation \eqref{eq:homotopy-commutativity}, one has to fix the choices at the endpoints: for $r = 0$ one uses the auxiliary data which enter the construction of \eqref{eq:smile-product}, and for $r = 1$ the pullback of the same data by an automorphism of $S$.
\end{example}

The case of non-compact $R$ is a little more difficult. For a single parameter $r \in R = \bR$, one has points $r = \pm \infty \in \bar{R}$ which correspond to ``broken'' surfaces, and the $S_r$ for $|r| \gg 0$ are obtained from those by a gluing construction (gluing together either strip-like or tubular ends). A simple solution would be to choose the data $(K_{S_r},J_{S_r})$ for $|r|$ large to be ones inherited from those for $r = \pm\infty$ through the gluing process. However, to ensure transversality, one generally has to allow a further perturbation, which however needs to decay sufficiently swiftly in the limit $r \rightarrow \pm\infty$. For technical simplicity, it is convenient to use perturbations that are zero on the ends as well as the ``thin'' pieces of $S_r$, which are what remains from the ends used up in the gluing process. For a description of the necessary conditions, we refer to \cite[Section 9]{seidel04} (this covers only discs with boundary punctures, but the overall strategy is the same in all cases). For higher-dimensional $R$, the main additional difficulty is to properly understand the gluing processes which describe the structure of $R$ near infinity. This is straightforward for \eqref{eq:bv-relation}. The remaining case is sufficiently important for our purpose to merit a more detailed discussion, which is our next task.

\subsection{The two-parameter family\label{subsec:ksv}}
We will now construct in detail the parameter space for the family of Riemann surfaces which appears in Proposition \ref{th:hexagon} (see Figures \ref{fig:hex-sides}, \ref{fig:hex-corners}). The construction combines ideas of \cite{kimura-stasheff-voronov95} (the Kimura-Stasheff-Voronov compactification) and \cite{liu02} (real Gromov-Witten theory). 

To begin with, take the moduli space $\scrR$ of annuli with one interior marked point, and let $\bar\scrR$ be its Kimura-Stasheff-Voronov (KSV) compactification. Points of $\bar\scrR$ parametrize objects (up to isomorphism) of the following kind. Let $\bar{S}$ be a nodal marked Riemann surface with boundary, which is a degeneration of an annulus with one interior marked point. Here, ``nodal Riemann surface with boundary'' means that 
\begin{equation} \label{eq:bar-s}
\bar{S} = \bigcup_{i \in I} \bar{S}_i,
\end{equation}
where the irreducible components $\bar{S}_i$ are compact Riemann surfaces, possibly with boundary. These components are glued together at boundary nodes and interior nodes, to form \eqref{eq:bar-s}. In addition, we require that one of these components should carry an interior marked point, which is not one of the nodes. The KSV structure is the following: let $\zeta \in \bar{S}$ be an interior node, with preimages $\zeta_\pm \in \bar{S}_{i_\pm}$. Then, we want to have a distinguished direction (a nonzero vector specified up to positive real multiples),
\begin{equation} \label{eq:node-direction}
\bR^+ \cdot \delta_\zeta \subset T_{\zeta_-}\bar{S}_{i_-} \otimes_{\bC} T_{\zeta_+}\bar{S}_{i_+}.
\end{equation}
Finally, there is a stability condition, which says that the group of automorphisms of $\bar{S}$ preserving all the preferred directions \eqref{eq:node-direction} should be finite; in our particular case, this implies that the automorphism groups are actually trivial. An isomorphism class of stable $\bar{S}$ determines a point of $\bar\scrR$. As usual, there is a stratification by topological type, with each stratum being an open manifold; the main stratum (an annulus) and the codimension $1$ strata are shown in Figure \ref{fig:pentagon}.

One general feature of KSV-type spaces is that they come with a continuous map to the corresponding Deligne-Mumford space. This map is defined by forgetting \eqref{eq:node-direction} and then collapsing the components that become unstable. In our case, a stratum-by-stratum analysis shows that no information is lost either by forgetting \eqref{eq:node-direction} or by collapsing components. Hence, the map to the corresponding real Deligne-Mumford space is bijective, and therefore a homeomorphism. Conversely, one can take this as a definition of the topology of $\bar{\scrR}$, starting with the familiar topology of the real Deligne-Mumford space as described in \cite{liu02} (see also \cite[Figure 10]{devadoss-heath-vipismakul10}). In fact, while we're about it, we will carry over the differentiable structure (as a two-dimensional manifold with corners) as well. 
%
 
\begin{figure}
\begin{centering}
\begin{picture}(0,0)%
\includegraphics{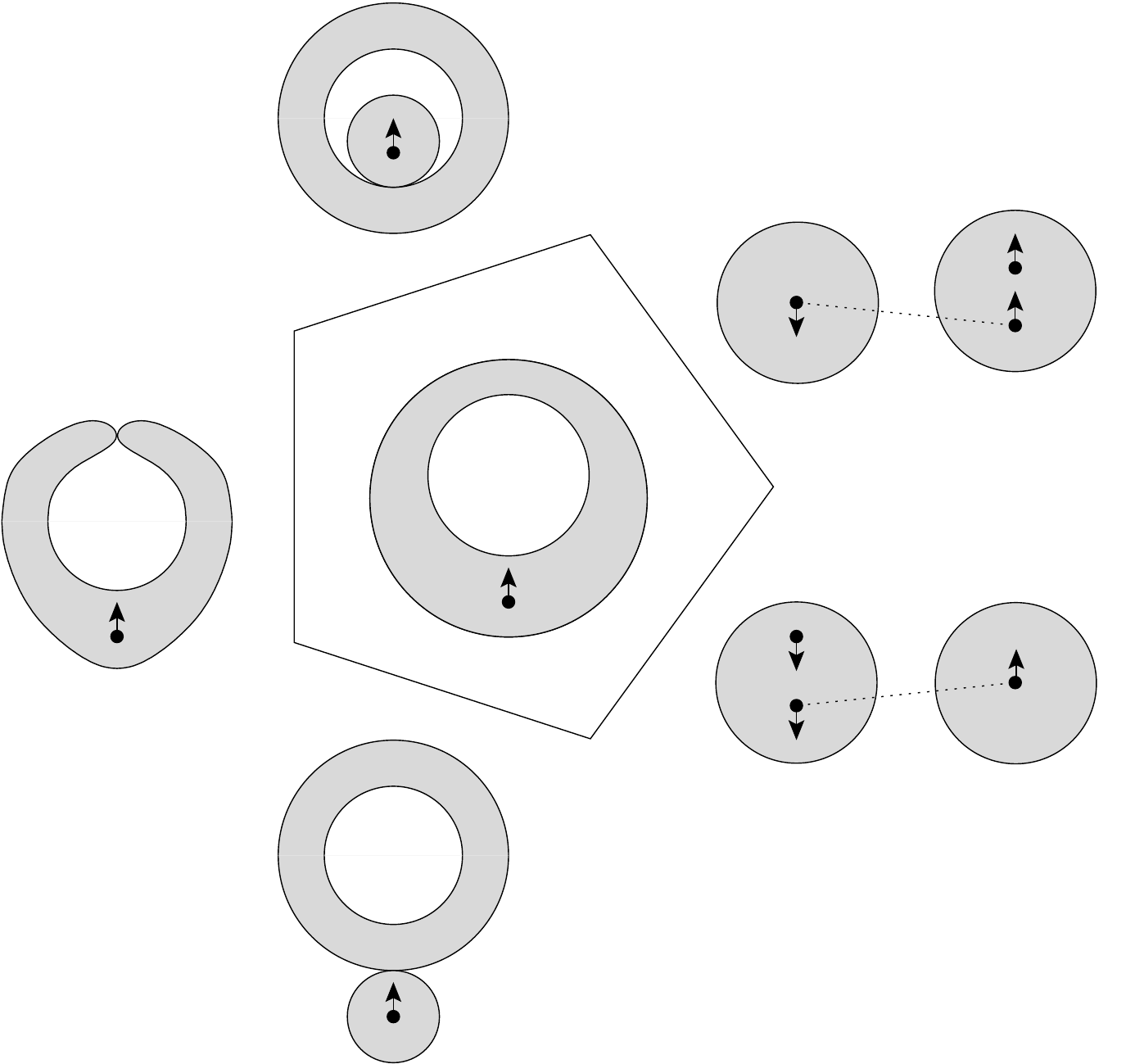}%
\end{picture}%
\setlength{\unitlength}{3552sp}%
\begingroup\makeatletter\ifx\SetFigFont\undefined%
\gdef\SetFigFont#1#2#3#4#5{%
  \reset@font\fontsize{#1}{#2pt}%
  \fontfamily{#3}\fontseries{#4}\fontshape{#5}%
  \selectfont}%
\fi\endgroup%
\begin{picture}(7377,6916)(664,-6593)
\put(3451,-1711){\makebox(0,0)[lb]{\smash{{\SetFigFont{11}{13.2}{\rmdefault}{\mddefault}{\updefault}{\color[rgb]{0,0,0}(ii)}%
}}}}
\put(3751,-5986){\makebox(0,0)[lb]{\smash{{\SetFigFont{11}{13.2}{\rmdefault}{\mddefault}{\updefault}{\color[rgb]{0,0,0}$L_0$}%
}}}}
\put(3076,-5611){\makebox(0,0)[lb]{\smash{{\SetFigFont{11}{13.2}{\rmdefault}{\mddefault}{\updefault}{\color[rgb]{0,0,0}$L_1$}%
}}}}
\put(3001,-211){\makebox(0,0)[lb]{\smash{{\SetFigFont{11}{13.2}{\rmdefault}{\mddefault}{\updefault}{\color[rgb]{0,0,0}$L_1$}%
}}}}
\put(3826, 89){\makebox(0,0)[lb]{\smash{{\SetFigFont{11}{13.2}{\rmdefault}{\mddefault}{\updefault}{\color[rgb]{0,0,0}$L_0$}%
}}}}
\put(1313,-3399){\makebox(0,0)[lb]{\smash{{\SetFigFont{11}{13.2}{\rmdefault}{\mddefault}{\updefault}{\color[rgb]{0,0,0}$L_1$}%
}}}}
\put(1801,-2386){\makebox(0,0)[lb]{\smash{{\SetFigFont{11}{13.2}{\rmdefault}{\mddefault}{\updefault}{\color[rgb]{0,0,0}$L_0$}%
}}}}
\put(7576,-1111){\makebox(0,0)[lb]{\smash{{\SetFigFont{11}{13.2}{\rmdefault}{\mddefault}{\updefault}{\color[rgb]{0,0,0}$L_1$}%
}}}}
\put(6076,-1111){\makebox(0,0)[lb]{\smash{{\SetFigFont{11}{13.2}{\rmdefault}{\mddefault}{\updefault}{\color[rgb]{0,0,0}$L_0$}%
}}}}
\put(6526,-4786){\makebox(0,0)[rb]{\smash{{\SetFigFont{11}{13.2}{\rmdefault}{\mddefault}{\updefault}{\color[rgb]{0,0,0}$L_0$}%
}}}}
\put(8026,-4786){\makebox(0,0)[rb]{\smash{{\SetFigFont{11}{13.2}{\rmdefault}{\mddefault}{\updefault}{\color[rgb]{0,0,0}$L_1$}%
}}}}
\put(2626,-2836){\makebox(0,0)[lb]{\smash{{\SetFigFont{11}{13.2}{\rmdefault}{\mddefault}{\updefault}{\color[rgb]{0,0,0}(iii)}%
}}}}
\put(3451,-4036){\makebox(0,0)[lb]{\smash{{\SetFigFont{11}{13.2}{\rmdefault}{\mddefault}{\updefault}{\color[rgb]{0,0,0}(iv)}%
}}}}
\put(4876,-3586){\makebox(0,0)[lb]{\smash{{\SetFigFont{11}{13.2}{\rmdefault}{\mddefault}{\updefault}{\color[rgb]{0,0,0}(v)}%
}}}}
\put(4876,-2236){\makebox(0,0)[lb]{\smash{{\SetFigFont{11}{13.2}{\rmdefault}{\mddefault}{\updefault}{\color[rgb]{0,0,0}(i)}%
}}}}
\end{picture}%
\caption{\label{fig:pentagon}}
\end{centering}
\end{figure}%

We can choose the following additional data smoothly over $\bar\scrR$: a tangent direction at the marked point, and moreover at each interior node, tangent directions to the preimages $\zeta_\pm$, whose tensor product equals $\bR^+ \cdot \delta_\zeta$. More precisely, we first want to make the choices on the boundary strata exactly as indicated in Figure \ref{fig:pentagon}, and then extend the choice of tangent direction at the marked point smoothly over the interior. 

The hexagon in Figure \ref{fig:hex-sides}, which we denote by $\hat\scrR$ from now on, is the real blowup of $\bar\scrR$ at the corner between boundary sides (i) and (v). We pull back the family of (nodal) surfaces by the projection $\hat\scrR \rightarrow \bar\scrR$, and then change the previously introduced tangent directions at the interior marked point as follows. Along boundary side (iii) (with its boundary orientation), rotate the tangent direction gradually by a total angle of $-\pi$. As a result, we get the opposite tangent direction along boundary sides (iv) and (v), and finally compensate this by rotating by $+\pi$ along the new boundary side (vi) created by the blowup. The outcome can then again be extended over the interior (this extension is obviously not the pullback of the previous one on $\bar\scrR$), yielding tangent directions as in Figure \ref{fig:hex-sides}.

We need to consider the geometry near the new boundary side (vi) in more detail. Let $S_0$ be the three-punctured sphere, with tubular ends 
\begin{equation}
\begin{aligned}
& \epsilon_{+,0}: (-\infty,0] \times S^1 \longrightarrow S_0, \\
& \epsilon_{+1}, \; \epsilon_{+,2}: [0,\infty) \times S^1 \longrightarrow S_0
\end{aligned}
\end{equation}
which are compatible with the choices of tangent directions used to construct \eqref{eq:smile-product}. Let $S_1$ be a disc with one interior puncture, with its tubular end $\epsilon_{-,1}: (-\infty,0] \times S^1 \rightarrow S_1$. We assume that this is rotationally symmetric, meaning that each nontrivial automorphisms of $S_1$ maps $\epsilon_{-,1}(s,t)$ to $\epsilon_{-,1}(s,t+\theta)$, for some constant $\theta$. Let $S_2$ be another disc of the same kind, with its end $\epsilon_{-,2}$. Gluing together these three surfaces with length parameters $(l_1,l_2)$ and angle parameters $(\theta_1,\theta_2)$ means to identify
\begin{equation} \label{eq:double-glue}
\begin{aligned}
& \epsilon_{+,1}(s,t) \sim \epsilon_{-,1}(s-l_1,t+\theta_1) \quad \text{for $(s,t) \in [0,l_1] \times S^1$,} \\
& \epsilon_{+,2}(s,t) \sim \epsilon_{-,2}(s-l_2,t+\theta_2) \quad \text{for $(s,t) \in [0,l_2] \times S^1$.}
\end{aligned}
\end{equation}
It is understood that we have first removed all but a finite piece of the ends in question. If we are only interested in the resulting Riemann surfaces, the choice of $\theta_k$ is irrelevant because of the rotational symmetry of the disc. However, later on when the surfaces come equipped with additional data $(J_{S_k},K_{S_k})$ breaking that symmetry, the angle parameters will matter.

In Figure \ref{fig:pentagon}, a coordinate neighbourhood of the corner point between sides (i) and (v) is parametrized by $(w_1,w_2) \in (-\delta,0]^2$ for some small $\delta > 0$, corresponding to choices of gluing lengths $l_k = -\log(-w_k)$. The logarithms come from the standard complex coordinates on the smoothing of a nodal Riemann surface, compare e.g.\ \cite[Section 9e]{seidel04}. We now pass to the real blowup, which near the new boundary side (vi) has coordinates $(w,r) \in (-\delta,0] \times [0,1]$ related to the previous ones by $w_1 = w\cos(\pi r/2)$, $w_2 = w\sin(\pi r/2)$. Keeping this and the $r$-dependent rotation of the strip-like ends from Figure \ref{fig:star} in mind, one finds that the gluing parameters are now given by
\begin{equation} \label{eq:rotate-glue}
\begin{aligned}
& l_1 = -\log(-w \cos(\pi r/2)), && \theta_1 = r/2-1/2, \\ 
& l_2 = -\log(-w \sin(\pi r/2)), && \theta_2 = r/2-1/2.
\end{aligned}
\end{equation}
In words, as we go from the boundary side (vi) inwards, both nodes are being smoothed, but at rates which reflect where on the boundary we started. Moreover, there is a rotational twist which varies linearly along the boundary. In our application, $S_0$ comes with the $r$-dependent datum $(J_{S_0,r},K_{S_0,r})$ underlying \eqref{eq:star-product}; and $S_1$, $S_2$ with the data that define $\phi^{1,0}_{L_1}$ and $\phi^{1,0}_{L_0}$, respectively. The choice of $\theta_1,\theta_2$ in \eqref{eq:rotate-glue} ensures that these are compatible with the gluing process. One chooses corresponding data on the glued surfaces as before. Given that, (by now) standard compactness and gluing arguments apply to the resulting parametrized moduli space. This forms the core of the proof of Proposition \ref{th:hexagon}.

\subsection{Signs and supertraces}
The last technical topic which we wish to expand on is a sign issue. Again, this is not new: it represents a small part of the more comprehensive discussion in \cite[Section 25]{fukaya-oh-ohta-ono10b}.

We start by recalling some index theory that enters into the construction of signs in Floer theory. Our presentation follows \cite[Section 11]{seidel04}, but an equivalent account (with different terminology) can be found in \cite[Chapter 8]{fooo}. Let $(V^{2n},\omega_V)$ be a symplectic vector space, with a compatible complex structure $J_V$ and complex volume form $\eta_V$. A linear Lagrangian brane $\Lambda$ in $V$ is an oriented linear Lagrangian subspace equipped with a grading and a {\em Spin} structure. The grading is a number $\alpha_\Lambda \in \bR$ such that $\exp(\pi i\alpha_\Lambda) = \eta_V(v_1 \wedge \cdots \wedge v_n) \in S^1$ for any oriented orthonormal basis $(v_1,\dots,v_n)$ of $\Lambda$. The {\em Spin} structure is a principal homogeneous space for the group $\mathit{Spin}_n$ together with an isomorphism between its reduction to $\mathit{SO}_n$ and the standard frame bundle of $\Lambda$. In the same way, one defines the notion of family of linear Lagrangian branes parametrized by some space.

Let $S$ be a compact Riemann surface with boundary, together with an ordering of its boundary circles. Suppose that this surface comes with a family $\Lambda = \{\Lambda_z\}$ of linear Lagrangian branes parametrized by $z \in \partial S$. Consider the standard Cauchy-Riemann operator with totally real boundary conditions:
\begin{equation} \label{eq:dbar}
\begin{aligned}
& \bar\partial_S: \scrE^0_S \longrightarrow \scrE^1_S, \\
& \scrE^0_S = \{v \in W^{k,2}(S,V) \;:\; v(z) \in \Lambda_z \text{ for $z \in \partial S$}\}, \\ 
& \scrE^1_S = W^{k-1,2}(S,V)
\end{aligned}
\end{equation}
for some $k \geq 1$. This is elliptic, and its determinant line
\begin{equation}
\mathit{det}(\bar\partial_S) = \lambda^{\mathit{top}}(\mathit{coker}(\bar\partial_S)^\vee) \otimes
\lambda^{\mathit{top}}(\mathit{ker}(\bar\partial_S))
\end{equation}
is independent of the choice of $k$ up to canonical isomorphism (as usual for determinant lines, we do not distinguish between two isomorphisms whose quotient is a positive number). Following \cite[Section 11]{seidel04} one can construct a preferred trivialization
\begin{equation} \label{eq:det-trivial}
\mathit{det}(\bar\partial_S) \iso \bR
\end{equation}
as follows. The prototypical case is where $S$ is a disc, and $\Lambda$ is constant. In that case, $\bar\partial_S$ is onto and its kernel consists of constant sections, yielding $\mathit{det}(\bar\partial_S) = \lambda^{\mathit{top}}(\Lambda)$. If the {\em Spin} structure is trivial, one uses the given orientation of $\Lambda$ to get \eqref{eq:det-trivial}, while for the other {\em Spin} structure one uses the opposite trivialization. The case where $S$ is a disc and $\Lambda$ is arbitrary can be reduced to this by deformation. Finally, one deals with general $S$ by degeneration, more concretely by shrinking boundary-parallel circles to nodes. All but one of the irreducible components of the resulting nodal surface are discs, to which the previous argument applies (using their ordering to put the trivializations together). The remaining irreducible component is closed, and one uses the canonical orientation of complex vector spaces to trivialize its determinant line (as well as for the remaining data needed to glue the components together). 

\begin{remark} \label{th:koszul-exchange}
Applying an even permutation to the ordering of boundary circles does not affect \eqref{eq:det-trivial}, but an odd permutation changes it by a Koszul sign obtained by exchanging the order of two operators \eqref{eq:dbar} associated to discs. Since those operators both have index $n$, the sign is $(-1)^n$.
\end{remark}

\begin{example} \label{th:two-discs}
Take two discs $D_0,D_1$ with marked boundary points $z_0 \in \partial D_0$, $z_1 \in \partial D_1$, and boundary conditions $\Lambda_0, \Lambda_1$ which are locally constant near the marked points, and such that $\Lambda_{0,z_0} = \Lambda_{1,z_1}$. One can then glue together the two to a disc $D$ with boundary conditions $\Lambda$. Gluing theory for elliptic operators identifies the determinant line of the $\bar\partial_D$ with that of the restriction of $\bar\partial_{D_1} \oplus \bar\partial_{D_0}$ to the subspace where $\{v_0(z_0) = v_1(z_1)\}$ (which makes sense for $k>1$). We can deform that condition linearly to $\{v_0(z_0) = 0\}$, which yields
\begin{equation} \label{eq:glue-discs}
\mathit{det}(\bar\partial_S) \iso \mathit{det}(\bar\partial_{S_1}) \otimes \lambda^{\mathit{top}}(\Lambda_{0,z_0}^\vee) \otimes \mathit{det}(\bar\partial_{S_0}).
\end{equation}
By the general convention for determinant lines \cite[Equation (11.3)]{seidel04}, if $(v_1,\dots,v_n)$ is an oriented basis of $\Lambda_{0,z_0}$, then $\lambda^{\mathit{top}}(\Lambda_{0,z_0}^\vee)$ should be trivialized using $v_n^\vee \wedge \cdots \wedge v_1^\vee$. If one uses that to cancel the middle term on the right hand side of \eqref{eq:glue-discs}, the outcome is an isomorphism $\mathit{det}(\bar\partial_S) \iso \mathit{det}(\bar\partial_{S_1}) \otimes \mathit{det}(\bar\partial_{S_0})$ compatible with \eqref{eq:det-trivial}.
\end{example}

\begin{example} \label{th:annulus-1}
Let $S$ be an annulus, with constant boundary conditions $(\Lambda_0,\Lambda_1)$ along the boundary circles, where the two Lagrangian subspaces intersect transversally. The operator \eqref{eq:dbar} is invertible, hence $\mathit{det}(\bar\partial_S) \iso \bR$ tautologically. Now suppose that we have another pair $(\Lambda_0',\Lambda_1')$ satisfying the same transversality condition. If one deforms one pair into the other, the resulting isomorphism between determinant lines differs from the tautological trivializations by a sign which comes from the Maslov index for paths \cite{robbin-salamon93} of the deformation reduced mod $2$. Equivalently, if we choose orientations, the sign is given by the difference of intersection numbers,
\begin{equation} \label{eq:mod2-maslov}
(-1)^{\Lambda_0 \cdot \Lambda_1 - \Lambda_0' \cdot \Lambda_1'}.
\end{equation}
Supposing that all boundary conditions carry linear brane structures (with trivial {\em Spin} structures), let's compare the tautological trivialization with the one defined previously, as part of our general process. To do that, one degenerates $S$ to two discs $S_0,S_1$, with constant boundary conditions $\Lambda_0,\Lambda_1$, and which are identified with each other along interior points $z_0,z_1$. Linear gluing theory yields an isomorphism
\begin{equation} \label{eq:linear-gluing}
\bR = \mathit{det}(\bar\partial_S) \iso \lambda^{\mathit{top}}(V^\vee) \otimes \mathit{det}(\bar\partial_{S_0}) \otimes \mathit{det}(\bar\partial_{S_1}),
\end{equation}
which is induced by the isomorphism $\mathit{ker}(\bar\partial_{S_0}) \oplus \mathit{ker}(\bar\partial_{S_1}) \rightarrow \lambda^{\mathit{top}}(V)$, $(v_0,v_1) \mapsto v_0-v_1$. The ordering of the terms in \eqref{eq:linear-gluing} reflects the fact that the boundary components of $\partial S$ are assumed to be ordered according to $(\Lambda_0,\Lambda_1)$, and the general conventions for \eqref{eq:linear-gluing} (in contrast, the ordering in \eqref{eq:glue-discs} was a choice we made freely when introducing that gluing process). If we use the given orientations of the $\Lambda_k$ and the complex orientation of $V$, then the determinant of that isomorphism is
\begin{equation} \label{eq:01-dot}
(-1)^{n + \Lambda_0 \cdot \Lambda_1}.
\end{equation}
Hence, this is the sign by which the tautological trivialization of $\mathit{det}(\bar\partial_S)$ differs from the one constructed in \eqref{eq:det-trivial}. Note that this is compatible with \eqref{eq:mod2-maslov}.
\end{example}

\begin{example} \label{th:annulus-2}
Take $S$ again to be an annulus, with the same constant boundary condition $\Lambda$ along both boundary components (and trivial {\em Spin} structures). We want to consider two ways of trivializing $\mathit{det}(\bar\partial_\Lambda)$. The first one is to deform the boundary conditions to $\Lambda_0 = e^{a J_V}\Lambda$, $\Lambda_1 = \Lambda$ for some small $a>0$. This deforms the operator into an invertible one, and we use the tautological trivialization of its determinant line. Note that this choice of deformation amounts to
\begin{equation} \label{eq:01-dot-2}
\Lambda_0 \cdot \Lambda_1 = (-1)^{n(n+1)/2}.
\end{equation}
The second possibility is to think of the annulus as obtained by starting with a disc $S'$ and gluing together two boundary points $z_0'$ and $z_1'$. By proceeding as in Example \ref{th:two-discs}, one gets 
\begin{equation}
\mathit{det}(\bar\partial_S) \iso \lambda^{\mathit{top}}(\Lambda^\vee) \otimes \mathit{det}(\bar\partial_{S'}) \iso \bR,
\end{equation}
where the last isomorphism uses the fact that the $\bar\partial$-operator on $S'$ becomes invertible when restricted to the subspace where $v(z_0') = 0$; or equivalently, it uses the orientation of $\Lambda$ and \eqref{eq:det-trivial}. Explicit computation (compare \cite[Section 25.4.1]{fukaya-oh-ohta-ono10b}) shows that these two trivializations agree.
\end{example}

Comparing Examples \ref{th:annulus-1} and \ref{th:annulus-2} yields the following. Suppose that we have an annulus $S$ with general boundary conditions. One can trivialize $\bar\partial_S$ either by degenerating it to two discs glued to each other along interior points, or else by degenerating it to a disc glued to itself along boundary points. The first method recovers \eqref{eq:det-trivial}; the second one yields a result differing from that by $(-1)^{n(n-1)/2}$, which comes from inserting \eqref{eq:01-dot-2} into \eqref{eq:01-dot}. This discrepancy (which appears in \cite{fukaya-oh-ohta-ono10b}, and was pointed out to the author by Abouzaid) is at the root of the signs appearing in connection with supertraces in all our formulae.

More explicitly, consider the right hand side of \eqref{eq:phi-check-phi} and \eqref{eq:phi-check-phi-2}. We get a $(-1)$ in front of $\langle \phi^{1,0}, \check{\phi}^{1,1}(a) \rangle$ because this is the boundary point $r = -\infty$ of the compactified parameter space $\bar{R} = \bR \cup \{\pm\infty\}$, hence counts negatively. The supertrace term carries the sign $(-1)^{n(n-1)/2}$ explained above. The final Koszul sign $(-1)^n$ is obviously due to the ordering of $L_0$ and $L_1$, as in Remark \ref{th:koszul-exchange}, but we can't fully satisfactorily explain its role without going into the geometric orientation conventions that enter into the definition of the simpler operations ($\phi^{1,0}$ and $\check{\phi}^{1,1}$), which is beyond our scope here. Similar remarks apply to \eqref{eq:6-expressions}, with the added complication that the boundary terms themselves are given by families of Riemann surfaces over suitable one-dimensional spaces $R$, giving rise to additional Koszul signs associated to the position of $TR$ in the ordering of the various operators. Luckily, the conceptually most important term (iii) is also the one whose signs can be explained very simply: it comes with a $(-1)^{n(n-1)/2}$ as before, plus an additional $(-1)$ because the identification of the parameter space underlying $\phi^{1,2}_{L_0,L_1}$ with the real line is the opposite of the boundary orientation of the hexagon. The last-mentioned $(-1)$ also appears in terms (iv) and (v), see the arrows in Figure \ref{fig:hex-sides}. Finally, we should point out that the (geometric) construction of the hexagon shows that the sum of the six boundary terms must be a chain map. We know this to be true algebraically (by relying on the properties of the simpler operations), and that provides a nontrivial consistency check on \eqref{eq:6-expressions}.
%
%
%
%

\section{Analogies\label{sec:motivation}}

\subsection{Topology}
Within classical topology, one can introduce a construction which shares some of the formal properties of our main theory (however, the BV operator is trivial in this context, so there are no analogues of dilations). We outline this briefly, hoping that it can be useful in providing additional intuition. To make the similarities stand out, we will allow overlaps in the notation, which is therefore inconsistent with the rest of the paper (as well as with Section \ref{subsec:alg-geom}); the reader has been warned!

Let $M$ be an oriented manifold, together with a class $B \in H^1(M;\bK)$ for some field $\bK$, represented by a singular $1$-cocycle $\beta$. We consider closed connected oriented submanifolds $L \subset M$ equipped with a $0$-cochain $\gamma_L: L \rightarrow \bK$ such that $d\gamma_L = \beta|L$. The pair $(L,\gamma_L)$ is written as $\tilde{L}$. Given two such submanifolds of complementary dimension and which intersect transversally, define
\begin{equation} \label{eq:bullet-trivial}
\tilde{L}_0 \bullet \tilde{L}_1 = \sum_{p \in L_0 \cap L_1} \pm (\gamma_{L_1}(p) - \gamma_{L_0}(p)) \in \bK,
\end{equation}
where $\pm$ is the local sign with which $p$ contributes to the ordinary intersection number $L_0 \cdot L_1$. Changing $\gamma_{L_k}$ by a constant will change \eqref{eq:bullet-trivial} by a corresponding multiple of $L_0 \cdot L_1$. Moreover, the following symmetry formula holds:
\begin{equation} \label{eq:bullet-trivial-symmetry}
\tilde{L}_1 \bullet \tilde{L}_0 + (-1)^{\mathrm{codim}(L_0) \, \mathrm{codim}(L_1)} \tilde{L}_0 \bullet \tilde{L}_1 = 0.
\end{equation}
One can show that \eqref{eq:bullet-trivial} is invariant under isotopies (if one carries over the $\gamma_{L_k}$ appropriately), hence the transverse intersection assumption is not really needed. Along the same lines, one does not need embedded submanifolds: smooth manifolds mapped to $M$ in an arbitrary way will do. 

\begin{example}
Let $M$ be the two-punctured plane, and $L$ the immersion of the circle drawn in Figure \ref{fig:8}. There is an obvious nontrivial choice of $B$ whose pullback to $L$ vanishes on the cohomology level, but not on the cochain level. Namely, take the integral of $B$ around a loop winding around either puncture to be $1$. Then, the values of $\gamma_L$ at the two preimages of the self-intersection point differ by $1$. If one considers the intersection of $L$ with a slightly perturbed version of itself, the contribution of each intersection point to \eqref{eq:bullet-trivial} vanishes unless the intersection comes from two different branches of $L$ crossing each other. Therefore, there are two points which give a nontrivial contribution. Those two points have different signs, but at the same time they have the opposite sign of $\gamma_{L_1}(p) - \gamma_{L_0}(p)$. Hence, their contributions add up to
\begin{equation}
\tilde{L} \bullet \tilde{L} = 2.
\end{equation}
\end{example}
\begin{figure}
\begin{centering}
\begin{picture}(0,0)%
\includegraphics{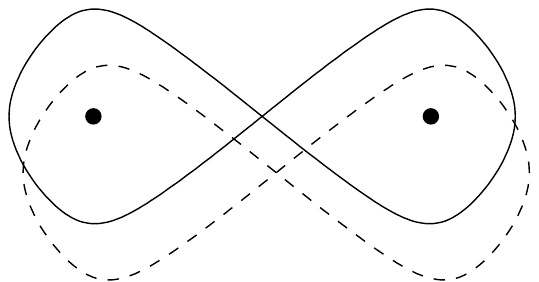}%
\end{picture}%
\setlength{\unitlength}{3552sp}%
\begingroup\makeatletter\ifx\SetFigFont\undefined%
\gdef\SetFigFont#1#2#3#4#5{%
  \reset@font\fontsize{#1}{#2pt}%
  \fontfamily{#3}\fontseries{#4}\fontshape{#5}%
  \selectfont}%
\fi\endgroup%
\begin{picture}(3105,1468)(1186,-1395)
\end{picture}%
\caption{\label{fig:8}}
\end{centering}
\end{figure}

One can think of \eqref{eq:bullet-trivial} as a twisted intersection pairing. This is most easily explained for closed $M$ and a finite field $\bK = \bF_p$. Consider the cyclic $p$-fold covering $\tilde{M} \rightarrow M$ associated to $B$. Choose a triangulation of $M$, and lift it to $\tilde{M}$. The simplicial cochain complex $C^*(\tilde{M})$ with $\bK$-coefficients comes with an action of the covering group, which turns it into a free module over $\bK[q]/q^p-1 = \bK[q]/(q-1)^p$. Consider
\begin{equation} \label{eq:trivial-c-tilde}
\tilde{C}^* \stackrel{\mathrm{def}}{=} C^*(\tilde{M}) \otimes_{\bK[q]/q^p-1} \bK[q]/(q-1)^2.
\end{equation}
Bearing in mind that $C^*(\tilde{M}) \otimes_{\bK[q]/q^p-1} \bK[q]/(q-1) = C^*(M)$ is the analogous complex for $M$, we find that the cohomology $\tilde{H}^*$ of \eqref{eq:trivial-c-tilde} sits in a long exact coefficient sequence
\begin{equation}
\cdots \rightarrow H^*(M;\bK) \longrightarrow \tilde{H}^* \longrightarrow H^*(M;\bK) \rightarrow \cdots
\end{equation}
where the boundary map is the product with $B$. Define a pairing $\iota$ on $C^*(\tilde{M})$ by
\begin{equation}
\iota(\tilde{x}_0,\tilde{x}_1) = \sum_j j \, \textstyle\int_{\tilde{M}} (q^{-j} \tilde{x}_0) \tilde{x}_1,
\end{equation}
where: the sum is over all $j \in \bF_p$; $q^{-j} \tilde{x}_0$ is the pullback by the covering transformation corresponding to $-j$; $(q^{-j} \tilde{x}_0) \tilde{x}_1$ is the cup-product; and $\int_{\tilde{M}}$ is the pairing with the simplicial fundamental cycle. This satisfies
\begin{equation}
\iota(q \tilde{x}_0, \tilde{x}_1) = \iota(\tilde{x}_0,\tilde{x}_1) + \textstyle\int_M x_0 x_1,
\end{equation}
where the $x_j \in C^*(M)$ are pushdowns of $\tilde{x}_j$. Hence,
\begin{equation}
\begin{aligned}
\iota((q-1)^2 \tilde{x}_0,\tilde{x}_1) & = \iota(q^2\tilde{x}_0,\tilde{x}_1) - 2 \iota(q \tilde{x}_0,\tilde{x}_1) + \iota(\tilde{x}_0,\tilde{x}_1) \\ & \textstyle
= \iota(\tilde{x}_0,\tilde{x}_1) + 2\int_M x_0 x_1 - 2\iota(\tilde{x}_0,\tilde{x}_1) - 2\int_M x_0x_1 + \iota(\tilde{x}_0,\tilde{x}_1) = 0.
\end{aligned}
\end{equation}
The same holds on the other side, allowing one to descend to the quotient \eqref{eq:trivial-c-tilde}. Denote the induced pairing on cohomology by $I: \tilde{H}^* \otimes \tilde{H}^{n-*} \rightarrow \bK$. Given a submanifold $L \subset M$ as before, writing $\beta|L = d\gamma_L$ yields a lift of $L$ to $\tilde{M}$, hence defines a class $\lbr\tilde{L}\rbr \in \tilde{H}^*$. Standard topological arguments show that the pairing $I$ applied to those classes recovers \eqref{eq:bullet-trivial}, in parallel with Theorem \ref{th:cardy2}.

\subsection{Algebraic geometry\label{subsec:alg-geom}}
There is another and somewhat closer analogy, coming from mirror symmetry, in which the counterpart of Hamiltonian Floer cohomology is the Hochschild (co)homology of an algebraic variety (more precisely, the counterpart of $B$ lies in Hochschild cohomology, but the space $\tilde{H}^*$ is constructed using Hochschild homology).

Let $M$ be a smooth quasi-projective variety of dimension $n$ over $\bC$, which comes with a vector field $B \in \Gamma(M,TM)$. Define a sheaf of commutative dg algebras 
\begin{equation} \label{eq:tilde-c-dga}
\begin{aligned}
& \tilde{\scrC}^* = \Omega^{-*}[t]/t^2 = \Omega^{-*} \oplus t\, \Omega^{-*}, \\
& d_{\tilde{\scrC}} = -t\,i_B.
\end{aligned}
\end{equation}
In words, these are algebraic differential forms with the grading reversed, with an added copy indexed by a formal variable $t$ of degree $0$; and the differential is $-t$ times the contraction with the vector field $B$. The hypercohomology $\tilde{H}^*$ of \eqref{eq:tilde-c-dga} sits in a long exact sequence
\begin{equation}
\cdots \rightarrow \bigoplus_{p-q = *} H^p(M,\Omega^q) \longrightarrow \tilde{H}^* \longrightarrow \bigoplus_{p-q = *} H^p(M,\Omega^q) \xrightarrow{-i_B} \cdots
\end{equation}
Let $\scrE$ be a coherent sheaf on $M$. Suppose that this can be equipped with a partial connection, which allows one to differentiate sections in $B$-direction. Formally, such a partial connection is given by a sheaf homomorphism $\partial_\scrE: \scrE \rightarrow \scrE$ such that $\partial_\scrE(f\phi) = f \partial_\scrE(\phi) + (B.f) \phi$ for any function $f$ and section $\phi$. Let $\scrJ^1(\scrE)$ be the one-jet sheaf. One can think of the partial connection equivalently as an $\scrO$-module map $\scrJ^1(\scrE) \rightarrow \scrE$ which fits into a commutative diagram
\begin{equation} \label{eq:equivariant-jet}
\xymatrix{
0 \ar[rr] && \Omega^1 \otimes \scrE \ar[rr] \ar[d]^-{i_B \otimes \mathit{id}_\scrE} && \scrJ^1(\scrE) \ar[d]^-{\text{partial connection}} \ar[rrr]^-{\text{projection}} &&& \scrE \ar[d] \ar[r] & 0 \\
0 \ar[rr] && \scrE \ar[rr]^-{\mathit{id}_\scrE} && \scrE \ar[rrr] &&& 0 \ar[r] & 0.
} 
\end{equation}
We denote by $\tilde{\scrE}$ the datum consisting of $\scrE$ together with $\partial_\scrE$. The boundary map associated to the diagram \eqref{eq:equivariant-jet} is a class in
\begin{equation} \label{eq:equivariant-atiyah}
\mathit{At}(\tilde{\scrE}) \in 
\mathit{Ext}^1(\scrE, \{\Omega^1 \xrightarrow{i_B} \scrO\} \otimes \scrE) \iso
H^0(M,\{\Omega^1 \xrightarrow{i_B} \scrO\} \otimes \scrH\!\mathit{om}^*(\scrE,\scrE)),
\end{equation}
where the complex $\{\Omega^1 \rightarrow \scrO\}$ is placed in degrees $\{-1,0\}$, and $\scrH\!\mathit{om}^*(\scrE,\scrE)$ are local derived endomorphisms (an object of the bounded derived category; it agrees with the usual endomorphism sheaf if $\scrE$ is locally free). One can map \eqref{eq:equivariant-atiyah} to $H^0(M,\tilde{\scrC}^* \otimes \scrH\!\mathit{om}^*(\scrE,\scrE))$, take the exponential with respect to the dga structure and then the trace $\scrH\!\mathit{om}^*(\scrE,\scrE) \rightarrow \scrO$. The outcome is a class
\begin{equation} \label{eq:equivariant-chern}
\mathit{Ch}(\tilde{\scrE}) \in H^0(M,\tilde{\scrC}^*).
\end{equation}
By looking at the top line of \eqref{eq:equivariant-jet}, one sees that the image of \eqref{eq:equivariant-atiyah} in $H^1(M,\Omega^1 \otimes \scrH\!\mathit{om}^*(\scrE,\scrE)) \iso \mathit{Ext}^1(\scrE, \Omega^1 \otimes \scrE)$ is the Atiyah class \cite{atiyah57}. Hence, the $t = 0$ truncation of \eqref{eq:equivariant-chern} is the Atiyah-Chern character of $\scrE$. This motivates our choice of notation. 

Given two coherent sheaves $\tilde\scrE_0,\tilde\scrE_1$ with partial connections, we get an induced partial connection on $\scrH\!\mathit{om}^*(\scrE_0,\scrE_1)$. That (considered as an endomorphism of the sheaf) induces an endomorphism of the hypercohomology 
\begin{equation} \label{eq:hyper}
H^*(M,\scrH\!\mathit{om}^*(\scrE_0,\scrE_1)) \iso \mathit{Ext}^*(\scrE_0,\scrE_1),
\end{equation}
which we denote by $\Phi_{\tilde{\scrE}_0,\tilde{\scrE}_1}$. We remind the reader that $\scrH\!\mathit{om}^*(\scrE_0,\scrE_1)$ is the derived version of the local $\scrH\!om$, hence an object of the bounded derived category. This ensures that \eqref{eq:hyper} is true even though $\scrE_0$ is not usually locally free; on the other hand, it means that when talking about a partial connection on $\scrH\!\mathit{om}^*(\scrE_0,\scrE_1)$, we are implicitly using a generalization of the previously introduced definition. Suppose now that $\scrE_0,\scrE_1$ have compact support, so that \eqref{eq:hyper} is of finite total dimension, and define
\begin{equation} \label{eq:alg-phi}
\tilde{\scrE}_0 \bullet \tilde{\scrE}_1 = \mathrm{Str}(\Phi_{\tilde{\scrE}_0,\tilde{\scrE}_1}) \in \bC.
\end{equation}

\begin{example} \label{th:gm-action}
As an important special case, one can consider a vector field $B$ which generates an action of the multiplicative group $\bC^*$ on $M$. A $\bC^*$-equivariant coherent sheaf has a canonical partial connection (in fact, partial connections are the infinitesimal analogue of equivariance, which is the origin of the terminology we have carried over to the main body of the paper). Given two $\bC^*$-equivariant sheaves, $\Phi_{\tilde{\scrE}_0,\tilde{\scrE}_1}$ is the infinitesimal generator of the induced $\bC^*$-action on $\mathit{Ext}^*(\scrE_0,\scrE_1)$. The equivariant Mukai pairing is
\begin{equation}
\sum_\sigma q^\sigma \chi(\mathit{Ext}^*(\scrE_0,\scrE_1)_\sigma) \in \bC[q,q^{-1}].
\end{equation}
where $\mathit{Ext}^*(\scrE_0,\scrE_1)_\sigma$ is the part on which the $\bC^*$-action has weight $\sigma$, and as usual, $\chi$ is the Euler characteristic.  Then \eqref{eq:alg-phi} is the derivative at $q = 1$.
\end{example}

\begin{conjecture}
Suppose that $M$ is projective. Then there is a nondegenerate pairing on $\tilde{H}^*$, which when applied to the classes \eqref{eq:equivariant-chern} recovers \eqref{eq:alg-phi}.
\end{conjecture}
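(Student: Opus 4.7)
My plan is to work in a Dolbeault model: write $\scrA^{p,q}(M)$ for the smooth $(p,q)$-forms on $M$, place each in total cochain degree $p - q$, and take
\[
\tilde{C}^\bullet \;=\; \Big(\bigoplus_{p,q}\scrA^{p,q}(M) \otimes \bC[t]/t^2,\; \bar\partial - t\, i_B\Big).
\]
The usual Dolbeault argument identifies $H^*(\tilde{C}^\bullet) = \tilde{H}^*$. Directly mimicking \eqref{eq:iota-pairing}, I define
\[
\iota\bigl(\alpha_0 + t\alpha_0',\;\alpha_1 + t\alpha_1'\bigr) \;=\; \int_M \alpha_0' \wedge \alpha_1 \;-\;(-1)^{|\alpha_0|} \int_M \alpha_1' \wedge \alpha_0,
\]
where $\int_M$ selects the $(n,n)$-component. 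There is no analogue of the correction term $\langle \beta, \xi_0 \ast \xi_1\rangle$ from \eqref{eq:iota-pairing}, because unlike the pair-of-pants product the wedge is already strictly graded commutative and $i_B$ is a genuine derivation of it, so the $\ast$-bracket (and its antisymmetrisation) vanishes on the nose.

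That $\iota$ descends to cohomology is a short Stokes/Leibniz calculation: the $\bar\partial$-contributions integrate to zero since $M$ is compact, and the $t\, i_B$-contributions cancel by the Leibniz rule for $i_B$ combined with $t^2 = 0$. For nondegeneracy I would apply the five-lemma to the triangle defining $\tilde{H}^*$: the induced pairing on the classical piece $\bigoplus H^p(\Omega^q)$ is ordinary Serre duality, and the two boundary maps $-i_B$ in the triangle are mutually Serre-dual up to a universal sign (because $i_B$ is a derivation of $\wedge$, which is the integrand of the pairing). This step runs parallel to the argument for Lemma \ref{th:pairing-1} and its corollary in the Floer setting.

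The substantive step is to show $\iota(\mathit{Ch}(\tilde\scrE_0),\mathit{Ch}(\tilde\scrE_1)) = \tilde\scrE_0 \bullet \tilde\scrE_1$. I would pass to locally free resolutions $\scrF_k^\bullet \to \scrE_k$, lift the partial connections to $\scrF_k^\bullet$, and represent $\mathit{Ch}(\tilde\scrE_k) \in \tilde{H}^0$ by the Chern--Weil-type cocycle $\mathrm{Str}\exp(R_k + t\,\partial_{\scrF_k^\bullet})$, where $R_k$ is the curvature of a Chern connection; the $t^0$-part recovers the usual Chern character and the $t^1$-part is the infinitesimal $B$-equivariant correction. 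Inserting these representatives into $\iota$ and using the standard $\mathrm{Str}(AB) = \pm\mathrm{Str}(BA)$ manipulation together with Serre duality applied to $\scrH\!\mathit{om}^*(\scrF_0^\bullet,\scrF_1^\bullet)$ should reduce the computation to an infinitesimal Atiyah--Bott/Lefschetz trace formula for $\Phi_{\tilde\scrE_0,\tilde\scrE_1}$.

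The main obstacle I anticipate is this last step, which is essentially a Cardy-type relation in the setting of smooth proper dg categories, and where the sign and grading bookkeeping will demand care. A cleaner route in the $\bC^*$-equivariant case of Example \ref{th:gm-action} is to invoke equivariant Hochschild--Kostant--Rosenberg together with the Mukai--Caldararu pairing on the Hochschild homology of $\mathit{Coh}^{\bC^*}(M)$, and then differentiate the resulting equivariant Mukai identity at $q = 1$. One would hope to reduce the general case to this by ``formally exponentiating'' $B$; however, when $B$ is not integrable to a $\bC^*$-action, making this rigorous appears to be the real difficulty, and I would be prepared to restrict the proposed theorem to the integrable case if that cannot be resolved.
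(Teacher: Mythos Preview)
The statement you are attempting is a \emph{conjecture} in the paper, not a theorem: the paper does not prove it. The only commentary the paper offers is that for $B = 0$ the statement reduces to the Cardy condition of Ramadoss (with a categorical version in C\u{a}ld\u{a}raru--Willerton), and that ``similar methods'' might handle the general case, but this is explicitly left outside the scope. So there is no proof in the paper to compare against.

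As for your proposal itself: the Dolbeault model and the pairing $\iota$ are set up correctly, and the nondegeneracy argument via the long exact sequence and Serre duality is sound (it is the algebraic-geometry mirror of the argument around Lemma~\ref{th:pairing-1}). The genuine gap, which you yourself flag, is the identification $\iota(\mathit{Ch}(\tilde\scrE_0),\mathit{Ch}(\tilde\scrE_1)) = \tilde\scrE_0 \bullet \tilde\scrE_1$. Two points here. First, lifting the partial connection $\partial_{\scrE_k}$ to a locally free resolution $\scrF_k^\bullet$ is not automatic: a partial connection on a coherent sheaf need not extend to one on a chosen resolution compatible with the differentials, and you would have to work instead with the Atiyah class formalism at the level of complexes (or pass through Hochschild homology and HKR from the start). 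Second, the phrase ``infinitesimal Atiyah--Bott/Lefschetz trace formula'' names exactly the statement to be proved rather than a tool that proves it; there is no off-the-shelf result of that shape for a general holomorphic vector field $B$ acting on $\mathit{Ext}^*(\scrE_0,\scrE_1)$. Your fallback to the $\bC^*$-equivariant case via the Mukai--C\u{a}ld\u{a}raru pairing and differentiation at $q=1$ is much more realistic, and indeed aligns with what the paper hints at; but as you note, the non-integrable case remains open, and your proposal does not close it.
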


This is a plausible analogue of Theorem \ref{th:cardy2}. In the case $B = 0$, it can be reduced to the Cardy condition from \cite{ramadoss09} (see also \cite[Theorem 16]{caldararu-willerton10} for a proof in a more abstract context). It is possible that similar methods would lead to a proof in general, but that is somewhat outside the scope of this paper.

The analogue of the dilation condition is to suppose that $M$ is Calabi-Yau, which means that it comes with a complex volume form $\eta$, and that our vector field $B$ satisfies $L_B\eta = -\eta$ (this is only possible if $M$ is noncompact). Recall that for compactly supported sheaves, we have a canonical nondegenerate Serre duality pairing
\begin{equation} \label{eq:serre}
\mathit{Ext}^{n-*}(\scrE_0,\scrE_1 \otimes \Omega^n) \otimes \mathit{Ext}^*(\scrE_1,\scrE_0)  \longrightarrow \bK.
\end{equation}
If we denote by $\tilde{\Omega}^n$ the sheaf $\Omega^n$ equipped with the partial connection given by the Lie derivative $L_B$, then \eqref{eq:serre} implies that
\begin{equation}
\tilde{\scrE}_0 \bullet (\tilde{\scrE}_1 \otimes \tilde\Omega^n) = (-1)^{n+1} \tilde{\scrE}_1 \bullet \tilde{\scrE}_0.
\end{equation}
On the other hand, using the trivialization of $\Omega^n$ given by $\eta$, one sees that
\begin{equation}
\label{eq:shift-weight} \tilde{\scrE}_0 \bullet (\tilde{\scrE}_1 \otimes \tilde\Omega^n) = \tilde{\scrE}_0 \bullet \tilde{\scrE}_1 - \chi(\mathit{Ext}^*(\scrE_0,\scrE_1)).
\end{equation}
Comparing the two expressions yields the counterpart of Corollary \ref{th:new-main-properties}(i) for \eqref{eq:alg-phi}:
\begin{equation}
\tilde{\scrE}_1 \bullet \tilde{\scrE}_0 = (-1)^{n+1} \tilde{\scrE}_0 \bullet \tilde{\scrE}_1 + (-1)^n\chi(\mathit{Ext}^*(\scrE_0,\scrE_1)).
\end{equation}
%
%
%

%

\end{document}